\documentclass[a4paper, 11pt]{article}

\usepackage[bb=boondox]{mathalfa}
\usepackage{amsmath,amsthm,amsfonts,amssymb,bbm,mathtools}
\usepackage[margin=2cm]{geometry}
\usepackage{etoolbox}
\usepackage[dvipsnames]{xcolor}
\usepackage{svg}
\usepackage{tikz}
\usepackage{comment}
\usepackage{url}

\usepackage{cleveref}

\usepackage[shortlabels]{enumitem}

\theoremstyle{plain}
\newtheorem{theorem}{Theorem}[section]

\newtheorem{corollary}  [theorem]{Corollary}

\newtheorem{example}    [theorem]{Example}
\newtheorem{lemma}      [theorem]{Lemma}

\newtheorem{proposition}[theorem]{Proposition}
\newtheorem{remark}     [theorem]{Remark}

\theoremstyle{definition}
\newtheorem{claim}  {Claim}
\newtheorem*{claim*}{Claim}

\newcommand{\N} {\mathbb{N}}

\newcommand{\F} {\mathbb{F}}
\renewcommand{\S}{\mathbb{S}}

\newcommand{\Define}    [1] {\textbf{#1}}

\newcommand{\join}      {\lor}
\newcommand{\meet}      {\land}
\newcommand{\bigjoin}   {\bigvee}
\newcommand{\bigmeet}   {\bigwedge}
\newcommand{\zero}      {\mathbf{0}}
\newcommand{\one}       {\mathbf{1}}
\newcommand{\Atoms}     {\mathrm{A}}
\newcommand{\lcm}       {\mathrm{lcm}}
\newcommand{\Height}    {\mathrm{h}}

\title{Semirings of formal sums and injective partial transformations}
\author{Maximilien Gadouleau\footnote{Department of Computer Science, Durham University, UK. \texttt{m.r.gadouleau@durham.ac.uk}} \and Marianne Johnson\footnote{Department of Mathematics,  University of Manchester, UK. \texttt{marianne.johnson@mathematics.manchester.ac.uk}}}
\date{\today}

\begin{document}

\maketitle

\begin{abstract}
The semiring of discrete dynamical systems is a simple algebraic model for modularity in deterministic systems. The objects of the semiring are finite transformations (viewed as directed graphs and regarded up to isomorphism), the sum of two transformations corresponds to applying them independently on distinct sets, and the product corresponds to applying both transformations in parallel. In this paper, we extend this semiring to include partial transformations; the sum and product are natural generalisations. Each (partial) transformation can be viewed as a sum (over $\N$) of connected (partial) transformations. We generalise this idea by working in semirings of formal sums over any semiring $\mathbb{S}$. Here we consider the case where $\mathbb{S} = \F_2$, the binary field, and we focus on injective partial transformations, i.e. sums of chains and cycles. While no efficient algorithm for the division problem for sums of cycles in the original semiring of discrete dynamical systems is known, we give a concise characterisation of all the solutions of the division problem for sums of cycles over $\F_2$. We then extend this characterisation to dividing any injective partial transformations, i.e. sums of chains and cycles over $\F_2$.
\end{abstract}

\section{Introduction} \label{section:introduction}

\subsection{The semiring of transformations} \label{subsection:semiring_of_transformations}

Many large systems, both abstract and in real life, are modular, i.e. they are built by combining smaller modules that perform individual tasks. Some examples include Boolean networks \cite{PPS21, KWVML23}, systolic arrays \cite{Pet92}, Cellular Automata \cite{Ped92, CCG01, McI09}, and Petri nets \cite{Dev16, Dev21, TCV22}. Similarly, in algebra, the direct product of structures (algebraic or relational) is an elementary operation, that can be defined over any variety, such as groups, rings, or semigroups; while in graph theory, the disjoint union is a basic way of combining two digraphs.

The semiring of transformations was first proposed in 2018 by researchers in Cellular Automata \cite{DDFMP18} as a simple mathematical model for deterministic systems.\footnote{It has appeared under the name of semiring of Discrete Dynamical Systems, Finite Dynamical Systems, or functional digraphs; we shall refer to those objects as transformations.} The objects of the semiring are finite transformations (functions from a finite set to itself) up to graph isomorphism. Those objects are equipped with two operations: the sum corresponds to two systems having disjoint dynamics, while the product corresponds to the parallel composition of the two dynamics.

The proposed model aims to be very broad by remaining simple: the modelled system has a finite number of possible states, for which we do not know the semantics, and is represented by a deterministic evolution. The two operations represent different ways of combining those systems. While (full) transformations assume that every state has a defined successor, some states may lead to undefined characteristics, or crashing the whole system entirely; such states without successors are encapsulated in a partial transformation instead. This importantly also applies to the case where only partial information on the transitions are available. As such, in this paper we extend the semiring of transformations to the semiring of partial transformations; the sum and product are defined naturally in this case.

An example of a sum and product of two partial transformations is given in Figure \ref{figure:one_example}.  The objects of the semiring are the unlabelled graphs, displayed on the top row. The bottom row displaying the labels illustrates how the product works: for instance, the arcs $\textcolor{red}{b} \to \textcolor{red}{a}$ in $\textcolor{red}{A}$ and $\textcolor{blue}{3} \to \textcolor{blue}{1}$ in $\textcolor{blue}{B}$ yield the arc $\textcolor{orange}{b3} \to \textcolor{orange}{a1}$ in $\textcolor{orange}{AB}$. Note that the product is commutative. Partial transformations may be represented by digraphs where each vertex has a unique successor. In graph terms, the sum represents the disjoint union of digraphs, while the product is the direct product of digraphs \cite{HIK11}.

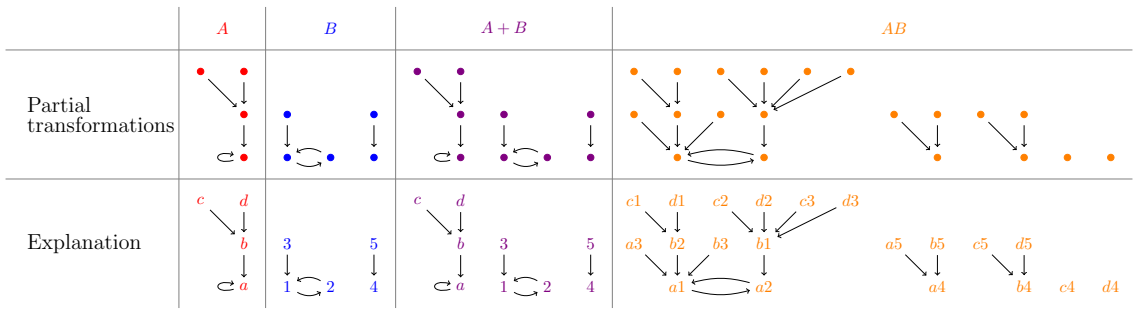
\begin{figure}[bp]
\centering
\resizebox{!}{4cm}{
\begin{tikzpicture}

    \draw[gray] (0.5,-3.5) -- (0.5,3.5);
    \draw[gray] (3.5,-3.5) -- (3.5,3.5);
    \draw[gray] (8.5,-3.5) -- (8.5,3.5);


    \draw[gray] (-5.5,2.5) -- (20.5,2.5);
    \draw[gray] (-5.5,-0.5) -- (20.5,-0.5);
    \draw[gray] (-1.5,-3.5) -- (-1.5,3.5);

    \node[text width = 4cm] (p) at (-3,1) {\Large Partial \newline transformations};
    \node[text width = 4cm] (e) at (-3,-2) {\Large Explanation};

    \begin{scope}[xshift = 0cm, yshift = -3cm, every node/.style={text=red}]
        
        \node (a) at (0,0) {$a$};  
        \node (b) at (0,1) {$b$};
        \node (c) at (-1,2) {$c$};
        \node (d) at (0,2) {$d$};

        \draw[->] (d) -- (b);
        \draw[->] (c) -- (b);
        \draw[->] (b) -- (a);
        \draw[->] (a) to [loop left] (a);
    \end{scope}

    \begin{scope}[xshift = 0cm, yshift = 0cm, vertex/.style={circle, draw=red, fill=red, inner sep=1.5, outer sep = 5}]
        \node[text = red] (A) at (-0.5,3) {$A$};
        \node[vertex] (a) at (0,0) {};  
        \node[vertex] (b) at (0,1) {};
        \node[vertex] (c) at (-1,2) {};
        \node[vertex] (d) at (0,2) {};

        \draw[->] (d) -- (b);
        \draw[->] (c) -- (b);
        \draw[->] (b) -- (a);
        \draw[->] (a) to [loop left] (a);
        
    \end{scope}
    
    \begin{scope}[xshift = 1cm, yshift = -3cm, every node/.style={text=blue}]
        \node (1) at (0,0) {$1$};  
        \node (2) at (1,0) {$2$};
        \node (3) at (0,1) {$3$};

        \node (4) at (2,0) {$4$};  
        \node (5) at (2,1) {$5$};

        \draw[->] (3) -- (1);
        \draw[->] (2) to [bend right] (1);
        \draw[->] (1) to [bend right] (2);

        \draw[->] (5) -- (4);
    \end{scope}

    \begin{scope}[xshift = 1cm, yshift = 0cm, vertex/.style={circle, draw=blue, fill=blue, inner sep=1.5, outer sep = 5}]
        \node[text = blue] (B) at (1,3) {$B$};
        \node[vertex] (1) at (0,0) {};  
        \node[vertex] (2) at (1,0) {};
        \node[vertex] (3) at (0,1) {};

        \node[vertex] (4) at (2,0) {};  
        \node[vertex] (5) at (2,1) {};

        \draw[->] (3) -- (1);
        \draw[->] (2) to [bend right] (1);
        \draw[->] (1) to [bend right] (2);

        \draw[->] (5) -- (4);
    
    \end{scope}

    \begin{scope}[xshift = 5cm, yshift = -3cm, every node/.style={text=violet}]
        \node (a) at (0,0) {$a$};  
        \node (b) at (0,1) {$b$};
        \node (c) at (-1,2) {$c$};
        \node (d) at (0,2) {$d$};

        \draw[->] (d) -- (b);
        \draw[->] (c) -- (b);
        \draw[->] (b) -- (a);
        \draw[->] (a) to [loop left] (a);

        \node (1) at (1,0) {$1$};  
        \node (2) at (2,0) {$2$};
        \node (3) at (1,1) {$3$};

        \node (4) at (3,0) {$4$};  
        \node (5) at (3,1) {$5$};

        \draw[->] (3) -- (1);
        \draw[->] (2) to [bend right] (1);
        \draw[->] (1) to [bend right] (2);

        \draw[->] (5) -- (4);
    \end{scope}

    \begin{scope}[xshift = 5cm, yshift = 0cm, vertex/.style={circle, draw=violet, fill=violet, inner sep=1.5, outer sep = 5}]]
        \node[text = violet] (A+B) at (1,3) {$A+B$};
        \node[vertex] (a) at (0,0) {};  
        \node[vertex] (b) at (0,1) {};
        \node[vertex] (c) at (-1,2) {};
        \node[vertex] (d) at (0,2) {};

        \draw[->] (d) -- (b);
        \draw[->] (c) -- (b);
        \draw[->] (b) -- (a);
        \draw[->] (a) to [loop left] (a);

        \node[vertex] (1) at (1,0) {};  
        \node[vertex] (2) at (2,0) {};
        \node[vertex] (3) at (1,1) {};

        \node[vertex] (4) at (3,0) {};  
        \node[vertex] (5) at (3,1) {};

        \draw[->] (3) -- (1);
        \draw[->] (2) to [bend right] (1);
        \draw[->] (1) to [bend right] (2);

        \draw[->] (5) -- (4);
    \end{scope}

    \begin{scope}[xshift = 10cm, yshift = -3cm, every node/.style={text=orange}]
        \node (a1) at (0,0) {$a1$};  
        \node (b1) at (2,1) {$b1$};
        \node (c1) at (-1,2) {$c1$};
        \node (d1) at (0,2) {$d1$};

        \node (a2) at (2,0) {$a2$};  
        \node (b2) at (0,1) {$b2$};
        \node (c2) at (1,2) {$c2$};
        \node (d2) at (2,2) {$d2$};

        \node (a3) at (-1,1) {$a3$};  
        \node (b3) at (1,1) {$b3$};
        \node (c3) at (3,2) {$c3$};
        \node (d3) at (4,2) {$d3$};

        \node (a4) at (6,0) {$a4$};  
        \node (b4) at (8,0) {$b4$};
        \node (c4) at (9,0) {$c4$};
        \node (d4) at (10,0) {$d4$};

        \node (a5) at (5,1) {$a5$};  
        \node (b5) at (6,1) {$b5$};
        \node (c5) at (7,1) {$c5$};
        \node (d5) at (8,1) {$d5$};

        \draw[->] (d1) -- (b2);
        \draw[->] (c1) -- (b2);
        \draw[->] (b1) -- (a2);
        \draw[->] (a1) to [bend right=15] (a2);

        \draw[->] (d2) -- (b1);
        \draw[->] (c2) -- (b1);
        \draw[->] (b2) -- (a1);
        \draw[->] (a2) to [bend right=15] (a1);

        \draw[->] (d3) -- (b1);
        \draw[->] (c3) -- (b1);
        \draw[->] (b3) -- (a1);
        \draw[->] (a3) -- (a1);

        \draw[->] (d5) -- (b4);
        \draw[->] (c5) -- (b4);
        \draw[->] (b5) -- (a4);
        \draw[->] (a5) -- (a4);

    \end{scope}

    \begin{scope}[xshift = 10cm, yshift = 0cm, vertex/.style={circle, draw=orange, fill=orange, inner sep=1.5, outer sep = 5}]]
        \node[text = orange] (AxB) at (5,3) {$AB$};
        \node[vertex] (a1) at (0,0) {};  
        \node[vertex] (b1) at (2,1) {};
        \node[vertex] (c1) at (-1,2) {};
        \node[vertex] (d1) at (0,2) {};

        \node[vertex] (a2) at (2,0) {};  
        \node[vertex] (b2) at (0,1) {};
        \node[vertex] (c2) at (1,2) {};
        \node[vertex] (d2) at (2,2) {};

        \node[vertex] (a3) at (-1,1) {};  
        \node[vertex] (b3) at (1,1) {};
        \node[vertex] (c3) at (3,2) {};
        \node[vertex] (d3) at (4,2) {};

        \node[vertex] (a4) at (6,0) {};  
        \node[vertex] (b4) at (8,0) {};
        \node[vertex] (c4) at (9,0) {};
        \node[vertex] (d4) at (10,0) {};

        \node[vertex] (a5) at (5,1) {};  
        \node[vertex] (b5) at (6,1) {};
        \node[vertex] (c5) at (7,1) {};
        \node[vertex] (d5) at (8,1) {};

        \draw[->] (d1) -- (b2);
        \draw[->] (c1) -- (b2);
        \draw[->] (b1) -- (a2);
        \draw[->] (a1) to [bend right=15] (a2);

        \draw[->] (d2) -- (b1);
        \draw[->] (c2) -- (b1);
        \draw[->] (b2) -- (a1);
        \draw[->] (a2) to [bend right=15] (a1);

        \draw[->] (d3) -- (b1);
        \draw[->] (c3) -- (b1);
        \draw[->] (b3) -- (a1);
        \draw[->] (a3) -- (a1);

        \draw[->] (d5) -- (b4);
        \draw[->] (c5) -- (b4);
        \draw[->] (b5) -- (a4);
        \draw[->] (a5) -- (a4);
    \end{scope}

\end{tikzpicture}
}
\caption{Sum and product of partial transformations.}
\label{figure:one_example}
\end{figure}

The semiring of transformations was introduced in \cite{DDFMP18} with a focus on algorithmic problems. For instance, algorithms to solve simple equations (related to the division equation $ax = b$) are given in \cite{DFMMR19, FRR21, BCDR24, DFMR24} and an efficient algorithm to enumerate transformations is given in \cite{DPT24}. Moreover, a natural framework for a class of reductions, called abstractions, is given in \cite{Riv22}. Concretely, the idea is to focus on a particular aspect of transformations in order to simplify solving equations. Natural examples of reductions include the cardinality, the transient parts, the periodic parts and homomorphic images. The so-called profile, introduced in \cite{Gaz20}, counts states according to their distance to the periodic part. The unroll \cite{NG24} is a homomorphism to the semiring of (possibly infinite) forests, which preserves the transient part. The unroll became the key to understand the transient part in the semiring of transformations, and as such has been used to classify the cancellable transformations ($a$ such that $ax = ay$ implies $x = y$) in \cite{NG24} and more generally to classify the injective univariate polynomials \cite{PR25CiE}.

Solving a multivariate equation $P(\vec{x}) = c$, where $P$ is a multivariate polynomial over the semiring of transformations and $c$ is a fixed transformation, is believed to be an NP-complete problem, since the counterpart problem for profiles is indeed NP-complete \cite{Gaz20}. However, exhibiting algebraic properties, such as the unroll homomorphism or the classification of injective univariate polynomials, tends to have algorithmic consequences as well. For instance, \cite{NG24} gives a polynomial-time algorithm to divide two dendrons (transformations where all trajectories eventually lead to a unique fixed point) while \cite{PR25CiE} gives a polynomial-time algorithm to solve the equation $P( x ) = c$ for an injective univariate polynomial $P$ and a fixed transformation $c$; the latter has been further generalised in \cite{PR25A} to so-called pseudo-injective polynomials.

Nonetheless, the complexity of many computational problems in the semiring of transformations remains unknown. A key problem is the division equation $ax = b$, even when restricted to permutations, i.e. sums of cycles. The current fastest algorithm to solve $ax = b$ when $a$ and $b$ are permutations, given in \cite{BCDR24}, runs in sub-exponential time.

\subsection{Contributions and outline} \label{subsection:contributions}

A partial transformation of a set $X$ is a mapping $f : Y \to X$, where $Y \subseteq X$. From $f$ one can define the directed graph $D_f$ with vertex set $V(D_f) = X$ and arc set $E(D_f) = \{ (y, f(y)) : y \in Y \}$. In this digraph, every vertex has out-degree at most one; conversely, any digraph where every vertex has out-degree at most one can be constructed that way. We say that two partial transformations $f$ and $f'$ of $X$ and $X'$ are equivalent if $D_f$ and $D_{f'}$ are isomorphic. We then consider classes of equivalence of partial transformations. The sum of two (equivalence classes of) partial transformations $f : Y \to X$ and $f' : Y' \to X'$ with $X$ and $X'$ disjoint is their disjoint union: $f+f' : Y \cup Y' \to X \cup X'$ with $(f + f')(y) = f(y)$ for all $y \in Y$ and $(f + f')( y' ) = f'( y' )$ for all $y' \in Y'$. Their product is given by the direct product of digraphs: $f f' : Y \times Y' \to X \times X'$ with $(ff')( y, y' ) = (f(y), f(y'))$ for all $(y, y') \in Y \times Y'$. It is clear that those operations are well-defined, since sum and product preserve the equivalence of partial transformations.

Our paper makes two main qualitative contributions. The first qualitative contribution is to consider partial transformations. We motivate this generalisation in two ways. Firstly, in many systems, some configurations lead to an undefined state, or a failure of some kind. As such, all states may not always have a successor, and the system is represented by a partial transformation. Secondly, in system biology, one learns the system through discovering some of the transitions in a successive manner, and some transitions may be impractical to actually discover. As such, the current knowledge of the system must be described by a partial transformation. Partial transformations form a semiring, by naturally extending the definitions of sum and product given for transformations.

The second main qualitative contribution is to introduce torsion to the semiring of partial transformations, by considering formal sums over a finite semiring $\mathbb{S}$. This immediately brings additive torsion, but can also bring multiplicative torsion as well. In the original semiring of transformations, elements have no torsion, since the vertex sets of $a, a^2, a^3, \dots$ all have distinct sizes; similarly $a, 2a, 3a$, and so on are all distinct. For instance, the cycle $C_d$ satisfies $C_d^2 = d C_d$, and hence $C_d^k = d^{k-1} C_d$ for all $k \ge 1$. On the other hand, if for instance $\mathbb{S} = \F_2$ is the binary field, then we count sums of cycles ``modulo $2$'', e.g. the permutation $C_3 + 4C_4 + 2C_5 + 7C_8$ becomes $C_3 + C_8$. In this semiring, we have $C_d^2 = 0$ if $d$ is even and $C_d^2 = C_d$ if $d$ is odd; and moreover, $C_d + C_d = 0$. These relations not only greatly simplify calculations, but also yield a rich algebraic structure which can be exploited (see Section \ref{section:semiring_C2}).

In this paper we consider the division problem  (given $a, b \in S$ determine whether there exists a solution to $ax=b$) in certain commutative semirings, which is equivalent to studying (any one of) Green's relations in the underlying multiplicative semigroup. Particularly we consider the subsemiring of equivalence classes of injective partial transformations with finite support, and each such graph can be decomposed into cycles and chains. More concretely, for each positive integer $d$, let $L_d = d \to (d-1) \to \dots \to 1$ be the \Define{chain} of length $d$ and as before $C_d = d \to d-1 \to \dots \to 1 \to d$ be the \Define{cycle} of length $d$. We denote the set of all finite chains as $L$ and the set of all finite cycles as $C$. Our main technical contributions centre around the division problem for sums of cycles, when counting modulo $2$, i.e. working in $\mathbb{S}C$, where $\mathbb{S} = \mathbb{F}_2$ and $C = \{C_d : d \ge 1\}$ is the set of all cycles. We show in Theorem \ref{theorem:ax=b} that the solutions to $ax = b$ when counting modulo $2$ can be expressed in terms of intervals in a Boolean algebra, and in particular deciding whether $ax = b$ has a solution can be done efficiently.  We extend this result in two ways. Firstly, reflecting on the work in \cite{PR25CiE} on injective polynomials, we classify the injective univariate polynomials over $\F_2 C$ and show that solving equations for those polynomials is straightforward in Theorem \ref{theorem:injective_polynomials}. Secondly, we solve the division problem for injective partial transformations, i.e. in $\F_2(L \cup C)$, in Theorem \ref{theorem:division_LC}.

The rest of the paper is organised as follows. Firstly, we introduce the general framework of semirings of formal sums in Section \ref{subsection:semirings_general}, and we focus on the important case of formal sums of elements of a semilattice over $\F_2$ in Section \ref{subsection:semilattice_F2}. Secondly, we carry out a thorough investigation of the semiring $\F_2 C$ of formal sums of cycles modulo $2$ in Section \ref{section:semiring_C2}: we give some general basic results in Section \ref{subsection:S_basic_properties}; we characterise the Boolean algebra of idempotents in Section \ref{subsection:boolean_algebra_S0}; we provide a characterisation of the solutions to the division equation $ax = b$ in Section \ref{subsection:division_in_S}; we characterise the (extended) Green's relations in that semigroup in Section \ref{subsection:Green}; we classify units, regular elements and more in Section \ref{subsection:special_elements}; we classify injective univariate polynomials in Section \ref{subsection:injective_polynomials}; and we study the intersection of principal ideals in Section \ref{subsection:intersection_principal_ideals}. Finally, we consider the division problem in the semiring of formal sums of injective partial transformations in Section \ref{section:injective}: we first consider the easy case of nilpotent injective partial transformations (i.e. sums of chains) in Section \ref{subsection:semiring_L2} and then the general case in Section \ref{subsection:division_LC}. The paper ends in the Conclusion Section \ref{section:conclusion}, where we give some possible avenues for future work.

\section{Semirings of formal sums}
\label{section:semiring_formal_sums}

\subsection{General case} \label{subsection:semirings_general}

A commutative semiring is a structure $(\mathbb{S}, +, \times, 0, 1)$, where $(\mathbb{S}, +)$ is a commutative monoid with identity $0$, $(\mathbb{S}, \times)$ is a commutative monoid with identity $1$, multiplication distributes over addition, i.e. $a \times ( b + c ) = ( a \times b ) + (a \times c )$ for all $a, b, c \in \mathbb{S}$, and $0$ is a multiplicative zero, i.e. $0 \times a = a \times 0 = 0$ for all $a \in \mathbb{S}$. Since all semirings considered in this paper are commutative, we will frequently say simply `semiring'. As usual, when the context is clear, we shall write $ab$ in place of $a \times b$. We denote the semiring of natural numbers with respect to the usual arithmetic operations by $\N = \{0, 1, \dots, \}$.

Let $G$ be a set and $\mathbb{S}$ be a semiring. Let $\mathbb{S}G$ be the set of formal sums of elements in $G$ with coefficients in $\mathbb{S}$ and finite support, i.e.
\[
    \mathbb{S}G = \left\{ \sum_{j \in G} s_j j : s_j \in \mathbb{S}, \text{ finitely many non-zero } s_j \right \}.
\]
For $s \in \mathbb{S}G$  and $j \in G$ we will write $s_j$ to denote the coefficient of $j$ in $s$. Notice that there is an obvious left action of $\mathbb{S}$ on $\mathbb{S}G$ determined by $(as)_j = as_j$ for all $a \in \mathbb{S}, s \in \mathbb{S}G, j\in G$, and a point-wise addition on $\mathbb{S}$ determined by $(s+t)_j = s_j + t_j$ for all $s,t \in \mathbb{S}G$ and all $j \in G$. It is clear that this operation is associative and commutative. For $g \in G$ we shall also (by a slight abuse of notation) write simply $g$ for the element $1g$ of $\mathbb{S} G$. Let us also write $0$ for the empty sum, which is the additive identity element. In order to specify a semiring structure on $\mathbb{S}G$, it suffices to specify a function of the form $G \times G \rightarrow \mathbb{S}G$ obeying certain natural conditions.
 
\begin{theorem}\label{theorem:NG_semiring}
Let $\mathbb{S}$ be a commutative semiring and suppose that $G$ is a set endowed with an operation $\cdot : G \times G \to \mathbb{S} G$, satisfying the following three conditions:
\begin{enumerate}
\item for all $g,h \in G$,  $g\cdot h = h\cdot g$;
\item there exists $e \in G$ such that $e \cdot g = g \cdot e=g \in \mathbb{S} G$ for all $g \in G$;
\item for all $g, h, i, k \in G$
\[
    \sum_{j \in G} (g\cdot h)_j (j \cdot i)_k = \sum_{j \in G} (h\cdot i)_j (j\cdot g)_k.
\]
\end{enumerate}Then $\mathbb{S} G$  is a semiring with respect to point-wise addition and multiplication given by:
\begin{equation}
\label{eq:prod}
\sum_{g \in G} s_g g \cdot \sum_{h \in G} t_h h = \sum_{g,h \in G} s_g t_h (g \cdot h).
\end{equation}
Furthermore, if $\mathbb{S}$ is a ring, then $\mathbb{S}G$ is a ring.
\end{theorem}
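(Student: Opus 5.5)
We verify the semiring axioms for $\mathbb{S}G$ directly from the definition \eqref{eq:prod}. First, the product is well defined: the sum on the right of \eqref{eq:prod} has only finitely many non-zero terms because $s$ and $t$ have finite support, and each $g\cdot h$ lies in $\mathbb{S}G$. Hence the result lies in $\mathbb{S}G$. Coefficientwise, we record
\[
(st)_k=\sum_{g,h\in G} s_g t_h (g\cdot h)_k .
\]
Addition is already known to be a commutative monoid with identity $0$, the empty sum.

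\emph{Bilinearity and the easy axioms.} Distributivity $s(t+u)=st+su$ follows at once from the coefficient formula, since $t_h+u_h$ enters linearly and $\mathbb{S}$ is distributive. The same formula gives $0\cdot s=s\cdot 0=0$, because every term contains a factor $0\in\mathbb{S}$. Commutativity follows from condition 1 together with commutativity of $\mathbb{S}$: swap the roles of $g$ and $h$. The element $e=1e$ is a two-sided identity, since $(e s)_k=\sum_h s_h (e\cdot h)_k=\sum_h s_h\delta_{h,k}=s_k$ by condition 2. Right distributivity then follows from commutativity.

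\emph{Associativity.} This is the main step. Expanding, we get
\[
((st)u)_k=\sum_{g,h,i} s_g t_h u_i \sum_j (g\cdot h)_j (j\cdot i)_k .
\]
All sums are finite, so reordering is legitimate. Similarly,
\[
(s(tu))_k=\sum_{g,h,i} s_g t_h u_i\sum_j (h\cdot i)_j (g\cdot j)_k .
\]
By condition 1 we may write $(g\cdot j)_k=(j\cdot g)_k$. Condition 3 then identifies the inner sums term by term, for each fixed $g,h,i,k$. The only care needed is with the interchange of finite sums and the use of commutativity of $\mathbb{S}$ to move the scalars $s_g t_h u_i$ outside. A cleaner route is to observe that both sides are trilinear in $(s,t,u)$, so it suffices to check associativity on basis elements $g,h,i$, where it is exactly conditions 1 and 3.

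\emph{The ring case.} If $\mathbb{S}$ is a ring, then $(\mathbb{S}G,+)$ is a group: the additive inverse of $s$ is the formal sum $\sum_g (-s_g)g$, which has the same finite support. Hence $\mathbb{S}G$ is a ring.

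I expect no real obstacle. The only delicate point is bookkeeping in the associativity computation, namely making sure that condition 3, as stated with $(j\cdot g)$ rather than $(g\cdot j)$, matches after applying condition 1.
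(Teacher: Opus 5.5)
Your proof is correct and takes the same route as the paper. Both get well-definedness from finite support, commutativity and the identity from conditions 1 and 2, and associativity by expanding both triple products and matching the inner sums via conditions 1 and 3. The paper does the associativity check only on basis elements and calls the extension routine, whereas your coefficientwise computation (and the trilinearity remark) carries it out explicitly. You also supply the additive inverses for the ring case, which the paper leaves implicit.
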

\begin{proof}
 First note that the formula in \eqref{eq:prod} determines  well-defined function $\mathbb{S} G \times \mathbb{S}G \rightarrow \mathbb{S}G$: by definition $g \cdot h$ is a formal sum with coefficients in $\mathbb{S}$ and finite support; since $\mathbb{S}$ acts on the left of  $\mathbb{S} G$ it follows that for all $g,h \in G$ and all $a,b \in \mathbb{S}$ we have $ab (g \cdot h) \in \mathbb{S} G$; then the fact that only finitely many of the coefficients $s_g$ and $t_h$ are non-zero yields that the above element has finite support. It is also easy to see that the empty sum $0$ is a left and right zero element with respect to this multiplication. Condition 1 together with the fact that multiplication in $\mathbb{S}$ is commutative clearly implies that this product is commutative, whilst the existence of the element $e$ from condition 2 ensures that $e \in \mathbb{S} G$ is an identity element for the multiplication. Next note that for all $g,h,i \in G$ we have
\begin{eqnarray*}
    (g\cdot h)\cdot i &=& \left(\sum_{j \in G}  (g\cdot h)_j j\right) \cdot i = \sum_{j \in G} ((g\cdot h)_j (j \cdot i)) =\sum_{j \in G} \left((g\cdot h)_j  \sum_{k \in G} (j\cdot i)_k  k\right) = \sum_{k \in G} \sum_{j \in G} (g\cdot h)_j (j\cdot i)_k  k\\
g\cdot (h\cdot i) &=& g \cdot \left(\sum_{j \in G}  (h\cdot i)_j j\right) = \sum_{j \in G}  (h\cdot i)_j (g \cdot j) =\sum_{j \in G} \left((h\cdot i)_j  \sum_{k \in G} (g\cdot j)_k  k\right) = \sum_{k \in G} \sum_{j \in G} (h\cdot i)_j (g\cdot j)_k  k,
\end{eqnarray*}
so we have $(g \cdot h) \cdot i = g \cdot (h \cdot i)$ if and only if condition 3 holds. From this it is then routine to check that the multiplication defined on $\mathbb{S} G$ is associative, and moreover that the multiplication distributes over the addition. Thus $\mathbb{S} G$ is a semiring.
\end{proof}

\begin{example}\label{example:semigroup_rings}
If $G$ is a commutative monoid and $\mathbb{S}$ a semiring, by taking $g \cdot h = gh \in \mathbb{S}G$ it is easy to see that each of the conditions above are satisfied. Indeed,  associativity of the monoid product in $G$ ensures that the third condition is satisfied, since $(ab)_c = \delta( ab, c )$ for all $a, b, c \in G$ and hence
\[
    \sum_j (gh)_j (ji)_k = \delta( (gh)i, k ) = \delta( g(hi), k ) = \sum_j (hi)_j (gj)_k.
\]
In this case $\mathbb{S}G$ with respect to the given operations is the monoid algebra of $G$ over $\mathbb{S}$ (typically studied in the case where $\mathbb{S}$ is taken to be a field).     
\end{example}

\begin{remark}\label{remark:SG_semiring}
Suppose we are given an operation $G\times G \rightarrow \N G$ satisfying the conditions of Theorem \ref{theorem:NG_semiring}. Then, for any semiring $\mathbb{S}$  we may define a multiplication on $\mathbb{S}G$ using the operation $\cdot: G \times G \rightarrow \N G$. Specifically if $\phi$ is the unique semiring homomorphism from $\N$ to $\mathbb{S}$ we may define $\cdot_\mathbb{S} : G \times G \to \mathbb{S}G$ as $g \cdot_\mathbb{S} h = \sum_j \phi( (g\cdot h)_j ) j$, and it is easy to check that this map also satisfies the conditions of Theorem \ref{theorem:NG_semiring}.
\end{remark}

\begin{example}
Taking $G$ to be the set of all finite directed graphs with maximum out-degree at most one, regarded up to isomorphism, the semiring of discrete dynamical systems is then the semiring $\N G$ where the product of two graphs is determined by a function $G \times G \rightarrow \N G$ as described in the introduction. For example, with $A$ and $B$ as in Figure \ref{figure:one_example} we find that $AB = M + 2N + 2 P$ where $M$ is the component with 12 vertices, $N$ is the component (occurring twice up to isomorphism) with three vertices and $P$ is the component (also occurring twice up to isomorphism) consisting of a single vertex. For $\mathbb{S} = \mathbb{F}_2$ the field of two elements, we may also consider the ring $\mathbb{F}_2 G$, which corresponds to considering all coefficients in $\N$ modulo $2$. For instance, in the example above, we have $AB = M$ when regarded as elements of $\mathbb{F}_2 G$.  In the following sections we shall be interested in certain subrings of $\mathbb{F}_2 G$.
\end{example}

\subsection{Formal sums over finite semilattices with $\F_2$} \label{subsection:semilattice_F2}

We now focus on the particular case where $G$ is a finite semilattice and $\mathbb{S} = \F_2$, which will be useful for Sections \ref{section:semiring_C2} and \ref{section:injective}. In this case, the structure of $\mathbb{S}G$ is that of a Boolean algebra.

Let us review some basic notation and terminology for lattices; see the book by Gr\"azer \cite{Gra03} for an authoritative review of lattices. Let $\mathcal{L} = ( L, \join, \meet, \zero, \one )$ be a bounded lattice with top element denoted by $\one$ and bottom element denoted by $\zero$. We naturally denote $l \le m$ for $l = l \meet m$, or equivalently $m = l \join m$. For any $l \in L$, we denote $l^\uparrow = \{ m \in L : m \ge l \}$, $l^+ = \{ m : m > l \}$, and $l^\downarrow = \{ m \in L : m \le l \}$. Note that $l^\downarrow$ is also a bounded lattice with top $l$ and bottom $0$. For any $l \in L$, let $l^c$ be the set of elements of $L$ \Define{covered} by $l$, i.e. $l^c = \{ m \in L : m < l, \not\exists n : m < n < l \}$. An element $a \in L$ is an \Define{atom} of $\mathcal{L}$ if $a^c = \{ \zero \}$; we denote the set of atoms of $\mathcal{L}$ as $\Atoms( \mathcal{L} )$. Recall that for all $l, m \in L$, the \Define{interval} $[l,m]$ is given by $[l,m] = \{ x \in L : l \le x \le m \}$; this interval is empty unless $l \le m$.

Let $\mathcal{L} = (L, \join, \meet, \zero, \one)$ be a finite lattice on $n$ elements. Then 
\[
    \F_2 L = \left\{ \sum_{ l \in L } a_l l : a_l \in \F_2 \ \forall l \in L \right\},
\]
endowed with the addition and product from Example \ref{example:semigroup_rings}, treating $L$ as a monoid with respect to meet, forms a Boolean ring. In order to emphasise the distinction between the element $l \in L$ and the formal sum $1 \cdot l$, we shall refer to the latter as the unit vector $e_l$. In turn, the Boolean ring on $\F_2 L$ induces the Boolean algebra $V( \mathcal{L} ) = ( \F_2 L , \join_{ V( \mathcal{L} ) }, \meet_{ V( \mathcal{L} ) }, \zero_{ V( \mathcal{L} ) }, \one_{ V( \mathcal{L} ) } )$, where:
\begin{itemize}
    \item the meet is given by $a \meet_{ V( \mathcal{L} ) } b = ab$,

    \item the join is given by $a \join_{ V( \mathcal{L} ) } b = a + b + ab$,

    \item the lower bound is the all-zero vector (empty sum) $\zero_{ V( \mathcal{L} ) } = 0$,

    \item the upper bound is the unit vector $\one_{ V( \mathcal{L} ) } = e_\one$,

    \item the complement is given by $\neg_{ V( \mathcal{L} ) } a = a + e_\one$.
\end{itemize}
We shall omit the subscript $_{ V( \mathcal{L} ) }$ from our notation when it is clear from the context that we are working in $V( \mathcal{L} )$.

All finite lattices of the same order yield isomorphic Boolean algebras: if $|L| = n$, then $V( \mathcal{L} )$ is isomorphic to the power set lattice of $\{1, \dots, n\}$, which of course has $n$ atoms \cite{GH09}. However, $V( \mathcal{L} )$ does encapsulate the structure of $\mathcal{L}$, since $e_l \le e_m$ in $V( \mathcal{L} )$ if and only if $l \le m$ in $\mathcal{L}$.

Each element of $\F_2 L$ may be identified with its support, that is, the subset of elements of $L$ arising with non-zero coefficient. As such, if $a \in \F_2 L$, we write $l \in a$ if $a_l = 1$, and we denote $\bigjoin a = \bigjoin_{l \in a} l$; addition $a+b$ in $V( \mathcal{L} )$ then corresponds to the symmetric difference of sets $a \Delta b$. 

In Section \ref{subsection:boolean_algebra_S0}, we shall work in the Boolean algebras $\F_2 L$ for a specific kind of finite lattices arising naturally from odd cycles. It will be very useful to characterise the atoms of those algebras. More generally, in order to perform computations in $\F_2 L$, determining its atoms is a basic requirement. As such, we construct in Theorem \ref{theorem:atoms} below all the atoms of $\F_2 L$. We begin with a construction.

\begin{lemma} \label{lemma:a_star}
Let $\mathcal{D} = (D, \vee, \meet, \one_\mathcal{D}, \one_\mathcal{D})$ be a finite lattice. Then there is a unique subset $a^*( \mathcal{D} ) \subseteq D$ such that $\one_\mathcal{D} \in a^*( \mathcal{D} )$ and for all $m \in D$ with $m \ne \one_\mathcal{D}$, $a^*( \mathcal{D} )$ contains an even number of elements greater than or equal to $m$.
\end{lemma}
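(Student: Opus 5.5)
We construct $a^*(\mathcal{D})$ by downward recursion on $D$ and show that each step is forced, which gives existence and uniqueness together. (The statement lists the bottom as $\one_\mathcal{D}$; this is presumably a typo for $\zero_\mathcal{D}$, but we only use that $D$ is a finite poset with a top element $\one_\mathcal{D}$.) Write $x_m\in\F_2$ for the indicator of $m\in a^*(\mathcal{D})$. The requirements then read
\[
x_{\one_\mathcal{D}} = 1, \qquad \sum_{n \ge m} x_n = 0 \ \text{ in } \F_2 \quad \text{for all } m \ne \one_\mathcal{D}.
\]
For $m \ne \one_\mathcal{D}$ the second condition is equivalent to
\[
x_m = \sum_{n > m} x_n ,
\]
that is, $x_m$ is the parity of $|m^+ \cap a^*(\mathcal{D})|$. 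This is a triangular system with respect to the partial order.

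Fix a linear extension of $\le$ and process the elements of $D$ from the top down. Start with $x_{\one_\mathcal{D}}=1$. For each later element $m$, every $n>m$ has already been processed, so $x_m$ is determined by the displayed formula. Define $a^*(\mathcal{D})$ to be the set of $m$ with $x_m=1$. By construction it contains $\one_\mathcal{D}$, and for each $m\ne\one_\mathcal{D}$ the number of its elements $\ge m$ is $x_m+\sum_{n>m}x_n \equiv 0 \pmod 2$. This proves existence.

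For uniqueness, let $A$ be any subset satisfying the conditions. Induct downward: $A$ agrees with $a^*(\mathcal{D})$ at $\one_\mathcal{D}$. If $A$ and $a^*(\mathcal{D})$ agree on $m^+$, then the parity condition at $m$ forces membership of $m$ to equal the parity of $|m^+\cap A| = |m^+\cap a^*(\mathcal{D})|$, so they agree at $m$ as well. Finiteness of $D$ makes this well-founded induction valid.

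Equivalently, the matrix of the zeta function $\zeta(m,n)=[m\le n]$ over $\F_2$ is unitriangular, hence invertible. The solution is $x = \zeta^{-1}e_{\one}$, which gives $x_m=\mu(m,\one_\mathcal{D}) \bmod 2$ by Möbius inversion; this identification may be useful later.

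No step is a genuine obstacle. The only point needing care is that the recursion is well-founded, which follows from finiteness, and the lattice structure is not needed at all.
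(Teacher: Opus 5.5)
Your proof is correct and follows the paper's argument: order $D$ top-down by a linear extension, put $\one_\mathcal{D}$ in, and include each later $m$ exactly when $|m^+\cap a^*(\mathcal{D})|$ is odd; since every choice is forced, this gives existence and uniqueness together. Your explicit downward induction for uniqueness spells out the paper's remark that ``any other choice will violate either condition,'' and your Möbius identification $x_m \equiv \mu(m,\one_\mathcal{D}) \pmod 2$ is a correct extra observation.
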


\begin{proof}
Label the elements of $D$ as $d_1, \dots, d_n$ such that $d_i \ge d_j$ only if $i \le j$ (in particular, $d_1 = \one_\mathcal{D}$ and $d_n = \zero_\mathcal{D}$). We note that for all $i \ge 2$, $d_i^+ \subseteq \{ d_1, \dots, d_{i-1} \}$. We construct $a^*( \mathcal{D} )$ recursively as follows. Firstly, $d_1=\one_\mathcal{D} \in a^*( \mathcal{D} )$. Then, for all $2 \le i \le n$, $d_i \in a^*( \mathcal{D} )$ if and only if $|d_i^+ \cap a^*( \mathcal{D} )|$ is odd. We note that any other choice will violate either condition of the lemma.
\end{proof}

Now for any element $l$ of a bounded lattice $L$ let us write $a^l = a^*( l^\downarrow )$. In particular, $a^\one = a^*(\mathcal{L} )$. Note that (by construction) $l \in a^l$ and moreover, all elements of $a^l$ are less than or equal to $l$. Recall that we view subsets of $L$, and hence in particular $a^l$, as  elements of $\mathbb{F}_2L$.  Thus $\bigvee a^l = l$.

\begin{theorem} \label{theorem:atoms}
The set of atoms of $V( \mathcal{L} )$ is given by
\[
    \Atoms( V( \mathcal{L} ) ) = \{ a^l : l \in L \}.
\]
\end{theorem}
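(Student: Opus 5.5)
The approach is to show that the elements $a^l$ are exactly the atoms of $V(\mathcal{L})$ by identifying them with the primitive idempotents of the Boolean ring $\F_2 L$. We will use the dual basis given by the ``up-set evaluation'' maps. For each $m \in L$, define the linear functional $\chi_m : \F_2 L \to \F_2$ by $\chi_m(x) = \sum_{l \ge m} x_l$; equivalently, $\chi_m(x)$ is the parity of $|x \cap m^\uparrow|$.

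The first step is to show that each $\chi_m$ is a ring homomorphism. On unit vectors, $\chi_m(e_l e_{l'}) = \chi_m(e_{l \meet l'}) = [l \meet l' \ge m] = [l \ge m][l' \ge m] = \chi_m(e_l)\chi_m(e_{l'})$. By bilinearity this gives $\chi_m(xy)=\chi_m(x)\chi_m(y)$ for all $x,y$, and clearly $\chi_m(e_\one)=1$. Thus each $\chi_m$ is a Boolean algebra homomorphism $V(\mathcal{L}) \to \F_2$.

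The second step is to show that $\chi = (\chi_m)_{m \in L} : \F_2 L \to \F_2^L$ is bijective. Its matrix with respect to the unit vectors is the zeta matrix of the poset $L$. Ordering $L$ by a linear extension makes this matrix unitriangular, so it is invertible over $\F_2$. Hence $\chi$ is an isomorphism of Boolean rings, and therefore of Boolean algebras, onto the power set algebra $\F_2^L$. The atoms of $\F_2^L$ are the indicator vectors $\delta_l$ for $l \in L$. So the atoms of $V(\mathcal{L})$ are exactly the unique elements $x^{(l)}$ satisfying $\chi_m(x^{(l)}) = \delta(l,m)$ for all $m$.

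The third step, which is the main point to check, is that $x^{(l)} = a^l$. The conditions $\chi_m(x)=0$ for all $m \not\le l$ force the support of $x$ to lie in $l^\downarrow$. To see this, suppose $x$ had support outside $l^\downarrow$, and take an element $m$ of the support that is maximal among those with $m \not\le l$. Then every element of the support lying above $m$ other than $m$ itself must be $\le l$. But an element that is both $\ge m$ and $\le l$ would give $m \le l$, a contradiction. Hence $\chi_m(x) = x_m = 1$, contradicting $\chi_m(x)=0$. So $x \subseteq l^\downarrow$. The remaining conditions are $\chi_l(x)=1$, which together with the support condition means $l \in x$, and $\chi_m(x)=0$ for every $m<l$. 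For $m \le l$, the set $m^\uparrow \cap x$ computed in $L$ coincides with the up-set of $m$ computed inside $l^\downarrow$. So these are precisely the defining conditions of $a^*(l^\downarrow)$ in Lemma~\ref{lemma:a_star}, and uniqueness in that lemma gives $x^{(l)} = a^l$. Since $\chi$ is a bijection, the $a^l$ for $l \in L$ are pairwise distinct, and there are $|L|$ of them, which matches the $n$ atoms of $V(\mathcal{L})$.
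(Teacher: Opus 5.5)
Your proof is correct, but it takes a genuinely different route from the paper. The paper argues intrinsically with atoms. Using the criterion of Lemma~\ref{lemma:a_atom} and Corollary~\ref{corollary:ela=0}, it shows that $a \mapsto \bigjoin a$ is injective on atoms. It gets surjectivity by counting, importing from the general theory of finite Boolean algebras that $V(\mathcal{L})$ has exactly $|L|$ atoms. It then shows that an atom $b$ with $\bigjoin b = l$ contains $l$ and satisfies $e_m b = 0$ (i.e.\ has an even number of elements above $m$) for every $m < l$, so uniqueness in Lemma~\ref{lemma:a_star} forces $b = a^l$. You instead exhibit an explicit isomorphism $\chi : V(\mathcal{L}) \to \F_2^L$ given by the zeta transform. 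The atoms are then simply $\chi^{-1}(\delta_l)$, and Lemma~\ref{lemma:a_star} is only needed to recognise them.

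Your route buys several things. It is self-contained: it proves rather than cites that $V(\mathcal{L})$ is a power-set algebra on $|L|$ atoms. It avoids a delicate point in the paper's injectivity step, which tacitly needs $l < s$ and so relies on the later-proved fact that an atom contains its join. It also gives extra information for free:
\begin{itemize}
\item $\chi(e_i)$ is the indicator of $i^\downarrow$, so $e_i = \sum_{l \le i} a^l$. This is the general form of the identity $C_i = \sum_{i \mid j} T_j$ in Theorem~\ref{theorem:Ti_atoms}.
\item The coefficient of $e_m$ in $a^l$ is the M\"obius function $\mu(m,l)$ reduced modulo $2$.
\item The quantity $|x|_l$ used later is just $\chi_l(x)$.
\end{itemize}
The paper's argument, in exchange, stays in order-theoretic language and uses no linear algebra.

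One small simplification: in your third step you can skip the support argument and verify directly that $\chi(a^l) = \delta_l$. Since $a^l \subseteq l^\downarrow$, no element of $a^l$ lies above any $m \not\le l$. The values at $m \le l$ are exactly the defining conditions of Lemma~\ref{lemma:a_star}. Bijectivity of $\chi$ then identifies $a^l$ with $\chi^{-1}(\delta_l)$.
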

We shall see in the proof that  $b=a^l$ is the unique element of $V(\mathcal{L})$ satisfying $\bigvee b =l$.
We begin the proof by a simple but useful property of atoms of $V( \mathcal{L} )$.

\begin{lemma} \label{lemma:a_atom}
A nonzero element $a \in V( \mathcal{L} )$ is an atom if and only if $e_l a \in \{a, 0\}$ for all $l \in L$. 
\end{lemma}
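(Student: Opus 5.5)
The plan is to exploit the fact that $V(\mathcal{L})$ is a finite Boolean algebra whose meet is the ring product, and that the unit vectors $e_l$, $l \in L$, span $\F_2 L$ as an $\F_2$-vector space. Atoms are then detected by testing against this spanning set.

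For the forward direction, suppose $a$ is an atom and fix $l \in L$. Since the meet in $V(\mathcal{L})$ is the product, $e_l a = e_l \meet a \le a$. Because $a$ is an atom, the only elements below it are $0$ and $a$, so $e_l a \in \{a, 0\}$.

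For the converse, suppose $a \ne 0$ and $e_l a \in \{a,0\}$ for every $l \in L$. Take any $b \le a$, so that $b = ab$, and aim to show $b \in \{0,a\}$. Write $b = \sum_{l \in b} e_l$, a finite sum over the support of $b$ in $\F_2$. Distributivity then gives
\[
    b = ab = \sum_{l \in b} e_l a.
\]
Each summand $e_l a$ lies in $\{a, 0\}$, so $b = k a$, where $k \in \F_2$ is the parity of the number of $l \in b$ with $e_l a = a$. Hence $b \in \{0, a\}$. As $a \neq 0$, this shows that $a$ is an atom.

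There is no real obstacle here. The only point needing care is that the spanning argument uses sums in $\F_2$ rather than joins, and this is legitimate because $\F_2 L$ is a ring and multiplication distributes over addition.
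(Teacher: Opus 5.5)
Your proof is correct and follows essentially the same route as the paper: the paper characterises atoms as those nonzero $a$ with $xa \in \{a,0\}$ for all $x$, then expands $x=\sum_l \alpha_l e_l$ and uses distributivity to reduce to unit vectors. Your version makes the same linearity argument explicit by writing an element $b \le a$ as $b = ab$.
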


\begin{proof}
Since $a$ is nonzero, it is an atom if and only if $x a \in \{ a, 0 \}$ for all $x \in V( \mathcal{L} )$. If $a$ is an atom, then in particular this holds for any unit vector $e_l$. Conversely, if $e_l a \in \{a, 0\}$ for all $l \in L$, then for any $x = \sum_l \alpha_l e_l$ we have $xa = \sum_l \alpha_l (e_la) \in \{ a, 0 \}$.
\end{proof}

\begin{corollary} \label{corollary:ela=0}
If $a$ is an atom of $V( \mathcal{L} )$, then $e_l a = 0$ for all $l < \bigjoin a$.
\end{corollary}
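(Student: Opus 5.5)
Let $a$ be an atom of $V(\mathcal{L})$ and write $m = \bigjoin a$. I will show $e_l a = 0$ for each $l < m$ by contradiction, relying only on Lemma \ref{lemma:a_atom}: since $a \neq 0$, for every $l \in L$ we have $e_l a \in \{a, 0\}$. So it suffices to rule out $e_l a = a$ whenever $l < m$.

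The product $e_l a$ is easy to compute in $\F_2 L$, where multiplication is induced by meet:
\[
    e_l a = \sum_{k \in a} e_{l \meet k}.
\]
Every term $l \meet k$ lies below $l$. Hence every element in the support of $e_l a$ is $\le l$; cancellations modulo $2$ can only remove elements, never add new ones. It follows that $\bigjoin (e_l a) \le l$, with the usual convention that the empty join is $\zero$.

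Now suppose $e_l a = a$ for some $l < m$. Then every element of the support of $a$ is $\le l$, so $m = \bigjoin a \le l < m$, a contradiction. Therefore $e_l a \ne a$, and Lemma \ref{lemma:a_atom} forces $e_l a = 0$.

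The only step needing care is the claim that the support of $e_l a$ stays below $l$ despite $\F_2$ cancellations. This holds because support can only shrink under cancellation, so I expect no real obstacle.
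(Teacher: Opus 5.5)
Your proof is correct and follows essentially the same route as the paper: both note that the support of $e_l a$ lies in $l^\downarrow$, while $a$ must contain an element not below $l$ (otherwise $\bigjoin a \le l$). Hence $e_l a \ne a$, and Lemma \ref{lemma:a_atom} forces $e_l a = 0$; the paper exhibits the element of $a \setminus l^\downarrow$ directly, whereas you argue by contradiction.
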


\begin{proof}
If $l < \bigjoin a$, then $l$ cannot be greater than or equal to all elements of $a$, that is there exists $m \in a \setminus l^\downarrow$. Recalling that all elements of $e_la$ lie in $l^\downarrow$ we then have $m \notin e_l a$ and hence $e_l a \ne a$; we obtain $e_l = 0$.
\end{proof}

\begin{proof}[Proof of Theorem \ref{theorem:atoms}]
    We first prove that if $a$ and $b$ are distinct atoms then $\bigjoin a \ne \bigjoin b$. Arguing for contradiction, suppose $a$ and $b$ are atoms with $a \ne b$ so that $a \Delta b \ne \emptyset$, but $s = \bigjoin a = \bigjoin b$. Let $l$ be a maximal element of $a \Delta b$, i.e. $l$ lies in exactly one of $a$ or $b$ and for any $m \in l^+$, either $m \in a \cap b$ or $m \notin a \cup b$. We obtain $|l^+ \cap a| = |l^+ \cap b|$ and hence $|l^\uparrow \cap a| = |l^\uparrow \cap b| \pm 1$. Thus either $e_l a \ne 0$ (if $|l^\uparrow \cap a|$ is odd) or $e_l b \ne 0$ (if $|l^\uparrow \cap a|$ is even). On the other hand, since $l < s$, we have $e_l a = e_l b = 0$ by Corollary \ref{corollary:ela=0}, which is the desired contradiction.

    We have just proved that the mapping $\Atoms( V( \mathcal{L} ) ) \to L$, $a \mapsto \bigjoin a$ is injective. Recalling that $V(\mathcal{L})$ is isomorphic to the power set lattice of $L$, we have $| \Atoms( V( \mathcal{L} ) ) | = | L | = n$, this mapping is also surjective, that is, for every $l \in L$, there is an atom of $V( \mathcal{L} )$, say $b$, such that $\bigjoin b = l$. Note that (by assumption) all elements of $b$ must be in $l^\downarrow$. We will now prove that $b = a^l$.

    First, we prove that $l \in b$. For the sake of contradiction, suppose $l \notin b$. Since $l = \bigjoin b$, there are at least two distinct maximal elements of $b$, say $x$ and $y$. We shall prove that $e_x b \notin \{ b, 0 \}$. Since $x, y \in L$ are maximal elements of $b$ we have $y \not \in x^{\downarrow}$, hence $y \notin e_x b$ and $e_x b \ne b$. Next,  $x^\uparrow \cap b = \{ x \}$, and so $x \in e_x b$ and $e_x b \ne 0$. Thus $e_x b \notin \{ b, 0 \}$, which contradicts Lemma \ref{lemma:a_atom}. From this contradiction, we then conclude that $l \in b$. Second, for all $m < l$ we have $l \notin e_m b$, hence $e_m b = 0$ by Lemma \ref{lemma:a_atom}. Thus, $|m^\uparrow \cap b|$ is even for all $m < l$. Finally, $b$ is a subset of $l^\downarrow$ such that $l \in b$ and $| m^\uparrow \cap b |$ is even for every $m < l$, thus by Lemma~\ref{lemma:a_star} we have $b = a^l$.  
\end{proof}

\begin{example} \label{example:hexagon}

An example of a lattice $\mathcal{L}$ is given on the left hand side Figure \ref{figure:hexagon}, with the atoms of $V( \mathcal{L} )$ listed on the right hand side. We remark that $a^\zero = e_\zero$, and that $a^t = t + \zero$ for each atom $t \in \Atoms( \mathcal{L} )$, two facts that hold for any finite lattice $\mathcal{L}$.

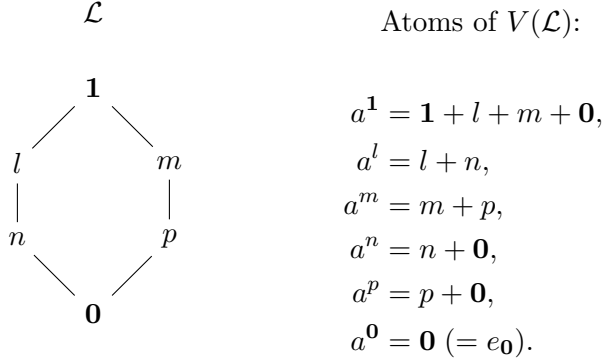
\begin{figure}[!hb]
    \centering

    \begin{tikzpicture}
        \node (L) at (1,4) {$\mathcal{L}$};
    
        \node (1) at (1,3) {$\one$};
        \node (l) at (0,2) {$l$};
        \node (m) at (2,2) {$m$};
        \node (n) at (0,1) {$n$};
        \node (p) at (2,1) {$p$};
        \node (0) at (1,0) {$\zero$};
    
        \draw (0) -- (n);
        \draw (0) -- (p);
        \draw (n) -- (l);
        \draw (p) -- (m);
        \draw (l) -- (1);
        \draw (m) -- (1);
    
        \node[text width = 10cm] at (6,2) {\begin{align*} & \text{Atoms of $V( \mathcal{L} )$:} \\~\\ a^\one &= \one + l + m + \zero, \\ a^l &= l + n, \\ a^m &= m + p, \\ a^n &= n + \zero, \\ a^p &= p + \zero, \\ a^\zero &= \zero \;(= e_\zero).
        \end{align*} };
        
    \end{tikzpicture}
    
    \caption{The hexagon lattice $\mathcal{L}$ and the atoms of $V( \mathcal{L} )$.}
    \label{figure:hexagon}
\end{figure}
\end{example}

We now generalise to the case where $G$ is a semilattice, and not only a lattice. Actually, the framework to study this case also encompasses the sublattice of sums containing an even number of terms, which will be useful when we study the division of injective partial transformations in Section \ref{subsection:division_LC}. As such, we give the framework in its generality first, and then focus on those two important special cases.

We now have a closer look at a particular kind of sublattice of $V( \mathcal{L} )$. For any $x \in V( \mathcal{L} )$, and any $l \in L$, let $|x|_l = 1$ if $x \ge a^l$ and $|x|_l = 0$ otherwise. Since $a^l$ is an atom, $x a^l \in \{ a^l, 0 \}$; we obtain that $|x|_l =1$ if $xa^l = a^l$ and $|x|_l = 0$ if $x a^l = 0$. We then define
\[
    P_l( \mathcal{L} ) = \{ x \in V( \mathcal{L} ) : |x|_l = 0 \}.
\]

\begin{lemma} \label{lemma:Pl}
For any $l \in L$, we have $P_l( \mathcal{L} ) = ( a^l + e_\one )^\downarrow$.
\end{lemma}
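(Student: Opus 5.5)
We need to show that $x \in P_l(\mathcal{L})$, i.e. $x a^l = 0$, holds exactly when $x \le a^l + e_\one$ in $V(\mathcal{L})$. The approach is to use only the Boolean algebra structure of $V(\mathcal{L})$ and the fact, given by Theorem \ref{theorem:atoms}, that $a^l$ is an atom.

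Recall that in $V(\mathcal{L})$ the complement of $a^l$ is $\neg a^l = a^l + e_\one$. Moreover, $x \le y$ means $x = xy$, since the meet in $V(\mathcal{L})$ is the ring product. So the statement reduces to the standard Boolean-algebra fact that $x a = 0$ if and only if $x \le \neg a$. We verify this directly in the ring.

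First suppose $|x|_l = 0$. By the discussion preceding the lemma, this means $x a^l = 0$. Then
\[
x(a^l + e_\one) = x a^l + x e_\one = 0 + x = x,
\]
using that $e_\one$ is the multiplicative identity (as $\one \meet m = m$ for all $m$). Hence $x \le a^l + e_\one$.

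Conversely, suppose $x \le a^l + e_\one$, so that $x = x(a^l + e_\one)$. Multiplying by $a^l$ gives
\[
x a^l = x(a^l + e_\one)a^l = x\bigl((a^l)^2 + a^l\bigr) = x(a^l + a^l) = 0,
\]
since $\F_2 L$ is a Boolean ring, so $(a^l)^2 = a^l$ and the characteristic is $2$. Therefore $x a^l = 0$, which means $x \not\ge a^l$ (as $a^l \ne 0$). Hence $|x|_l = 0$ and $x \in P_l(\mathcal{L})$.

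The only point needing care is the dichotomy $x a^l \in \{a^l, 0\}$ with $|x|_l = 1$ exactly when $x a^l = a^l$. This was already noted before the lemma, as a consequence of $a^l$ being an atom (Theorem \ref{theorem:atoms}, Lemma \ref{lemma:a_atom}). Beyond that, the argument is a routine ring computation, so no real obstacle is expected.
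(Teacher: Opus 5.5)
Your proof is correct and follows the paper's argument: you reduce $|x|_l=0$ to $xa^l=0$ using the atom dichotomy, then use that $xa^l=0$ if and only if $x\le\neg a^l=a^l+e_\one$. The only difference is that you verify this last Boolean-algebra fact by an explicit ring computation, whereas the paper takes it as standard.
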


\begin{proof}
We have $|x|_l = 0$ if and only if $x a^l = 0$, which in turn is equivalent to $x \le \neg a^l = a^l + e_\one$.
\end{proof}

This immediately yields a simple characterisation of the intersection between an interval and $P_l( \mathcal{L} )$, which we shall use in Section \ref{section:injective}.

\begin{corollary} \label{corollary:Pl}
For all $\lambda, \upsilon \in V(\mathcal{L})$ with $\lambda \le \upsilon$ and all $l \in L$, we have
\begin{align}
    \label{equation:P1a}
    [ \lambda, \upsilon ] \cap P_l( \mathcal{V} ) &= [ \lambda, \upsilon + \upsilon a^l ], \\
    \label{equation:P1b}
    [ \lambda, \upsilon ] \setminus P_l( \mathcal{V} ) &= [ \lambda + a^l + \lambda a^l, \upsilon ].
\end{align}
\end{corollary}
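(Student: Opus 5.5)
We work entirely inside the Boolean algebra $V(\mathcal{L})$ (we read $P_l(\mathcal{V})$ as $P_l(\mathcal{L})$). The two facts we use are Lemma \ref{lemma:Pl}, which says $P_l(\mathcal{L}) = (\neg a^l)^\downarrow$, and the fact that $a^l$ is an atom (Theorem \ref{theorem:atoms}). So every $x$ satisfies $x a^l \in \{0, a^l\}$, with $x \in P_l(\mathcal{L})$ exactly when $x a^l = 0$. We also translate the ring expressions into lattice operations: $\upsilon + \upsilon a^l = \upsilon(e_\one + a^l) = \upsilon \meet \neg a^l$, and $\lambda + a^l + \lambda a^l = \lambda \join a^l$.

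For \eqref{equation:P1a}, note that $x \in [\lambda,\upsilon] \cap P_l(\mathcal{L})$ means $\lambda \le x \le \upsilon$ and $x \le \neg a^l$. This is equivalent to $\lambda \le x \le \upsilon \meet \neg a^l$, which is the interval $[\lambda, \upsilon + \upsilon a^l]$.

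For \eqref{equation:P1b}, note that $x \notin P_l(\mathcal{L})$ means $x a^l = a^l$, that is, $a^l \le x$. So $x$ lies in the left-hand side if and only if $\lambda \le x \le \upsilon$ and $a^l \le x$. This is equivalent to $\lambda \join a^l \le x \le \upsilon$, which is the interval $[\lambda + a^l + \lambda a^l, \upsilon]$.

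The only point needing care is the meaning of the right-hand intervals when they are empty. For instance, if $\lambda \not\le \neg a^l$ then $\lambda \not\le \upsilon \meet \neg a^l$. By the paper's convention that $[l,m]=\emptyset$ unless $l\le m$, the right-hand side is then empty. The left-hand side is also empty, since any $x$ in it would satisfy $\lambda \le x \le \neg a^l$. The same reasoning covers \eqref{equation:P1b}. Since the equivalences above are pointwise, both sides are empty simultaneously and no separate case analysis is needed; the translation of the ring expressions into meet and join is the only real step.
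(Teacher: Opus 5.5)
Your proof is correct and follows essentially the same route as the paper: both identify $P_l(\mathcal{L})$ with the down-set $[0, a^l + e_\one]$ (and, via the atom property of $a^l$, its complement with $[a^l, e_\one]$), intersect intervals, and rewrite $\upsilon \meet (a^l + e_\one)$ and $\lambda \join a^l$ in ring form. Your remark on empty intervals is harmless but not needed: any set $\{x : \lambda' \le x \le \upsilon'\}$ is automatically empty when $\lambda' \not\le \upsilon'$.
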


\begin{proof}
We have
\[
    [ \lambda, \upsilon ] \cap P_l( \mathcal{V} ) = [ \lambda, \upsilon ] \cap [ 0, a^l + e_\one ] 
    = [ \lambda \join 0, \upsilon ( a^l + e_\one ) ] 
    = [ \lambda, \upsilon + \upsilon a^l ].
\]
And similarly, $[ \lambda, \upsilon ] \setminus P_l( \mathcal{V} ) = [ \lambda, \upsilon ] \cap [ a^l, e_\one ] = [ \lambda \join a^l, \upsilon e_\one ] = [ \lambda + a^l + \lambda a^l, \upsilon]$.

\end{proof}

We focus on two particular cases of $P_l( \mathcal{L} )$. First, the \Define{parity-check code} is defined as the elements that are not greater than $a^\zero = e_\zero$:
\[
    P_\zero( \mathcal{L} ) = \{ x \in V( \mathcal{L} ) : |x|_\zero = 0 \}.
\]
According to Lemma \ref{lemma:Pl}, $P_\zero( \mathcal{L} ) = (e_\one + e_\zero)^\downarrow$. The parity-check code is then a Boolean algebra with atoms $\Atoms( V( \mathcal{L} ) ) \setminus \{ e_\zero \}$. The term parity-check code comes from the fact that $|a|_\zero$ is the cardinality of $a$, viewed a subset of $L$, computed modulo $2$: the parity-check code is then the family of all the subsets of even size.

Second, removing the upper bound $\one$ from $\mathcal{L}$ yields a meet-semilattice, and conversely, for any finite meet-semilattice $\mathcal{S}$, $\mathcal{S}^\one$ obtained by adding an upper bound $\one$ is a lattice. We can then generalise the $V( \mathcal{L} )$ construction to finite meet-semilattices. Indeed, let $\mathcal{L} = \mathcal{S}^\one$ and
\[
    V( \mathcal{S} ) = P_\one( \mathcal{S}^\one ) = \{ x \in V( \mathcal{S}^\one ) : |x|_\one = 0 \}.
\]

\begin{theorem} \label{theorem:VS}
Let $\mathcal{S}$ be a finite meet-semilattice. Let $a^\one$ be the unique atom of $V( \mathcal{S}^\one )$ such that $\bigjoin a^\one = \one$. Then $V( \mathcal{S} ) = ( \F_2\mathcal{S}, \join, \meet, \one_{ V( \mathcal{S}^\one ) }, \zero_{ V( \mathcal{S}^\one ) } )$, where $\one_{ V( \mathcal{S}^\one ) } = e_\one + a^\one$ and $\zero_{ V( \mathcal{S}^\one ) } = 0$ is the empty sum, is the Boolean algebra with set of atoms $\Atoms( V( \mathcal{S}^\one ) ) \setminus \{ a^\one \}$.
\end{theorem}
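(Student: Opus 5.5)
The plan is to apply Lemma \ref{lemma:Pl} with $l = \one$ in the lattice $\mathcal{L} = \mathcal{S}^\one$. This gives
\[
V(\mathcal{S}) = P_\one(\mathcal{S}^\one) = (a^\one + e_\one)^\downarrow
\]
inside the Boolean algebra $V(\mathcal{S}^\one)$. Any principal down-set $u^\downarrow$ of a finite Boolean algebra is again a Boolean algebra. Its meet is the ambient meet, its join is the ambient join, its bottom is $0$ and its top is $u$. Complementation is relative, $x \mapsto x + u$. Its atoms are exactly the ambient atoms lying below $u$.

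Taking $u = e_\one + a^\one = \neg a^\one$, an atom $a$ satisfies $a \le \neg a^\one$ if and only if $a a^\one = 0$. Since distinct atoms have zero meet and $a^\one a^\one = a^\one \ne 0$, this holds exactly when $a \ne a^\one$. So by Theorem \ref{theorem:atoms} the atoms of $V(\mathcal{S})$ are $\Atoms(V(\mathcal{S}^\one)) \setminus \{a^\one\}$. The top element is $e_\one + a^\one$ and the bottom is $0$, as claimed.

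It remains to identify the underlying set with $\F_2\mathcal{S}$, meaning formal sums not involving $\one$. This is the step I expect to need actual care. By Lemma \ref{lemma:a_star}, $\one \in a^\one$, so $u = e_\one + a^\one = a^\one \setminus \{\one\}$ does not contain $\one$. Every $x \le u$ satisfies $x = xu$, and in the monoid algebra over meet the product $xu$ is a sum of terms $l \meet m$ with $m \in u$, hence $m \ne \one$. So $l \meet m \ne \one$, and therefore $x \in \F_2\mathcal{S}$.

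For the converse I would count dimensions. The down-set $u^\downarrow$ has $2^{|\Atoms|-1} = 2^{|S|}$ elements, since $|\Atoms(V(\mathcal{S}^\one))| = |S|+1$. This equals $|\F_2\mathcal{S}|$, so the inclusion $u^\downarrow \subseteq \F_2\mathcal{S}$ is an equality.

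Alternatively, one can argue directly that $|x|_\one = 0$ for $x \in \F_2\mathcal{S}$. For each $s \in S$, we have $e_s a^\one = 0$ by Corollary \ref{corollary:ela=0}, because $s < \one = \bigjoin a^\one$; linearity then gives $x a^\one = 0$. This direct route is cleaner, and I would present it, keeping the counting argument as a check.

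Finally, the operations on $\F_2\mathcal{S}$ are meet $= $ product (the product of $\mathcal{S}$) and join $a+b+ab$. These are exactly the restrictions of those of $V(\mathcal{S}^\one)$, which completes the identification.
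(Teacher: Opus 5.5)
Your proposal is correct and follows essentially the paper's route: realise $V(\mathcal{S})$ as $P_\one(\mathcal{S}^\one) = (e_\one + a^\one)^\downarrow$, then identify the underlying set with $\F_2\mathcal{S}$ by tracking the coefficient of $\one$, using Corollary \ref{corollary:ela=0} and linearity for the inclusion $\F_2\mathcal{S} \subseteq P_\one(\mathcal{S}^\one)$. The differences are cosmetic: you spell out the down-set and atom bookkeeping that the paper leaves implicit, and you show $\one \notin x$ via $x = xu$ with $\one \notin u$, whereas the paper observes that $\one \in x$ forces $\one \in x a^\one$.
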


\begin{proof}
We only need to prove that the set of elements of $V( \mathcal{S} )$ is $\F_2 \mathcal{S}$. Let $x \in V( \mathcal{S}^\one )$. If $\one \notin x$, then $x a^\one = 0$ by Corollary \ref{corollary:ela=0}; conversely, if $\one \in x$, then $\one \in x a^\one$ and hence $x a^\one = a^\one$. We obtain that $|x|_\one = 0$ if and only if $\one \notin x$, i.e. $x$ is a formal sum of terms in $\mathcal{S}$.
\end{proof}

As an example, and anticipating the next section, $V( \mathcal{S}^\one )$ for the semilattice $\mathcal{S} = \{ C_3, C_5, C_{15} \}$ is given in Figure \ref{figure:boolean_semilattice}, where the parity-check code $P_\zero( \mathcal{L} )$ and the Boolean algebra $V( \mathcal{S} )$ are also displayed.

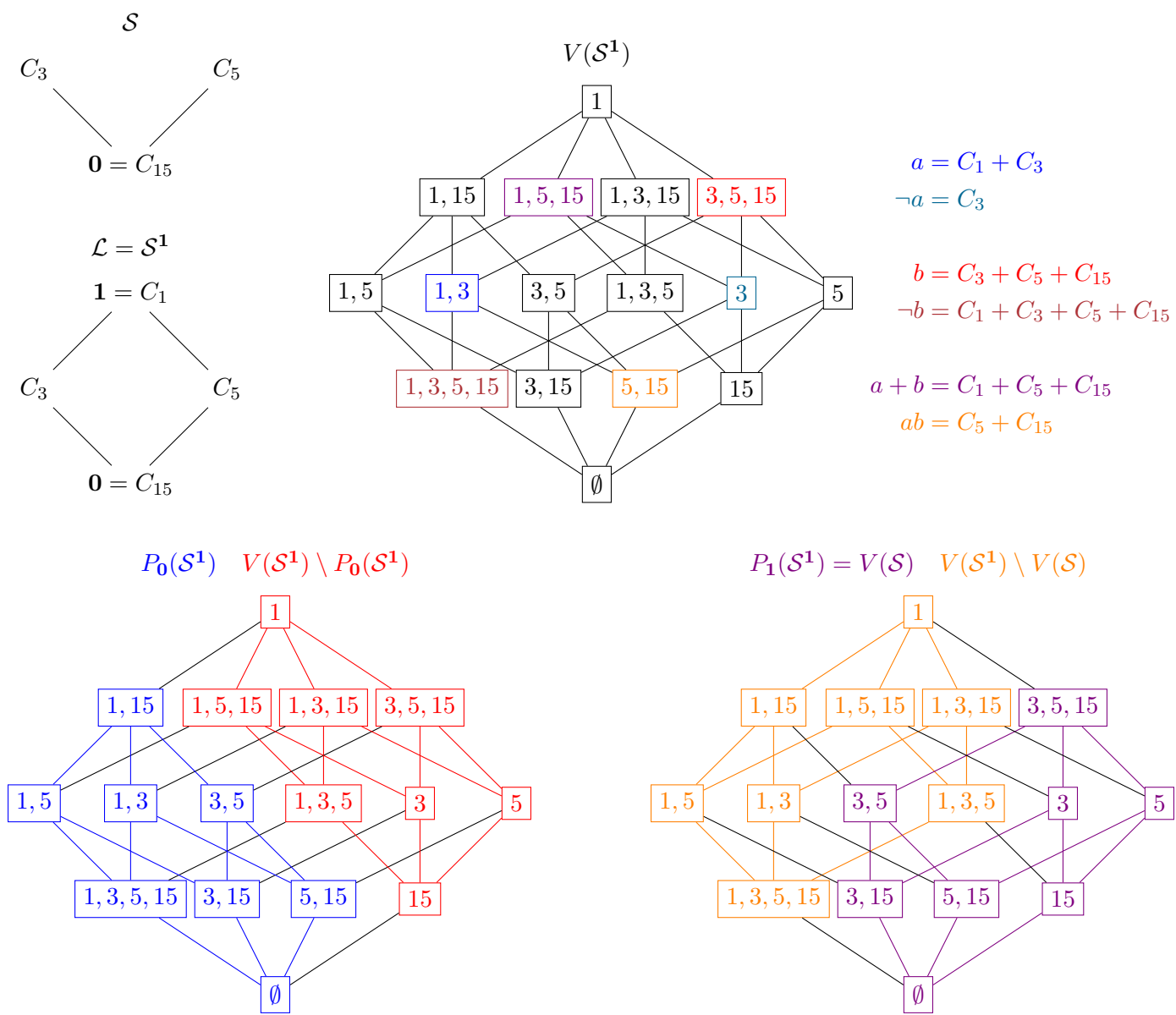
\begin{figure}[bp]
\centering
\begin{tikzpicture}

    \begin{scope}[xshift=-3cm, yshift=0cm, xscale = 1.5, yscale = 1.5]
    \node (C15) at (1,0) {$\zero = C_{15}$};
    \node (C3) at (0,1) {$C_3$};
    \node (C5) at (2,1) {$C_5$};

    \path[draw]
    (C15) edge (C3)
    (C15) edge (C5);

    \node (S) at (1,1.5) {$\mathcal{S}$};
    \end{scope}

    \begin{scope}[xshift=-3cm, yshift=-5cm, xscale = 1.5, yscale = 1.5]
    \node (C15) at (1,0) {$\zero = C_{15}$};
    \node (C3) at (0,1) {$C_3$};
    \node (C5) at (2,1) {$C_5$};
    \node (C1) at (1,2) {$\one = C_1$};

    \path[draw]
    (C15) edge (C3)
    (C15) edge (C5)
    (C5) edge (C1)
    (C3) edge (C1);

    \node (L) at (1,2.5) {$\mathcal{L} = \mathcal{S}^\one$};
    \end{scope}

\begin{scope}[xshift= 5cm, yshift =-5cm, xscale = 1.5, yscale = 1.5]
    \node (V) at (0.5,4.5) {$V( \mathcal{S}^\one )$};

    \node[draw] (0000) at (0.5,0) {$\emptyset$}; 
    
    \node[draw, Maroon] (0001) at (-1,1) {$1, 3, 5, 15$}; 
    \node[draw] (0010) at (0,1) {$3,15$}; 
    \node[draw, orange] (0100) at (1,1) {$5,15$}; 
    \node[draw] (1000) at (2,1) {$15$}; 
    
    \node[draw] (0011) at (-2,2) {$1,5$}; 
    \node[draw, blue] (0101) at (-1,2) {$1,3$}; 
    \node[draw] (1001) at (1,2) {$1,3,5$}; 
    \node[draw] (0110) at (0,2) {$3,5$}; 
    \node[draw, MidnightBlue] (1010) at (2,2) {$3$}; 
    \node[draw] (1100) at (3,2) {$5$}; 
    
    \node[draw] (0111) at (-1,3) {$1,15$}; 
    \node[draw, violet] (1011) at (0,3) {$1,5,15$}; 
    \node[draw] (1101) at (1,3) {$1,3,15$}; 
    \node[draw, red] (1110) at (2,3) {$3,5,15$}; 
    
    \node[draw] (1111) at (0.5,4) {$1$}; 

    \draw (0000) -- (0001);
    \draw (0000) -- (0010);
    \draw (0000) -- (0100);
    \draw (0000) -- (1000);
    
    \draw (0001) -- (0011);
    \draw (0001) -- (0101);
    \draw (0001) -- (1001);
    
    \draw (0010) -- (0011);
    \draw (0010) -- (0110);
    \draw (0010) -- (1010);
    
    \draw (0100) -- (0101);
    \draw (0100) -- (0110);
    \draw (0100) -- (1100);
    
    \draw (1000) -- (1001);
    \draw (1000) -- (1010);
    \draw (1000) -- (1100);

    \draw (0011) -- (0111);
    \draw (0011) -- (1011);

    \draw (0101) -- (0111);
    \draw (0101) -- (1101);

    \draw (1001) -- (1011);
    \draw (1001) -- (1101);

    \draw (0110) -- (0111);
    \draw (0110) -- (1110);

    \draw (1010) -- (1011);
    \draw (1010) -- (1110);

    \draw (1100) -- (1101);
    \draw (1100) -- (1110);

    \draw (1111) -- (1110);
    \draw (1111) -- (1101);
    \draw (1111) -- (1011);
    \draw (1111) -- (0111);

    \node[text width = 2cm] (A) at (4,2) {$\begin{aligned} \textcolor{blue}{a\;} &\textcolor{blue}{= C_1 + C_3}\\ \textcolor{MidnightBlue}{\neg a\;} &\textcolor{MidnightBlue}{= C_3} \\ & \\ \textcolor{red}{b\;} &\textcolor{red}{= C_3 + C_5 + C_{15}} \\ \textcolor{Maroon}{\neg b\;} &\textcolor{Maroon}{= C_1 + C_3 + C_5 + C_{15}} \\ & \\ \textcolor{violet}{a+b\;} &\textcolor{violet}{= C_1 + C_5 + C_{15}}\\ \textcolor{orange}{ab\;} &\textcolor{orange}{= C_5 + C_{15}} \end{aligned}$};

\end{scope}

\begin{scope}[yshift=-13cm, xscale = 1.5, yscale = 1.5]

    \node[draw, blue] (0000) at (0.5,0) {$\emptyset$}; 
    
    \node[draw, blue] (0001) at (-1,1) {$1, 3, 5, 15$}; 
    \node[draw, blue] (0010) at (0,1) {$3,15$}; 
    \node[draw, blue] (0100) at (1,1) {$5,15$}; 
    \node[draw, red] (1000) at (2,1) {$15$}; 
    
    \node[draw, blue] (0011) at (-2,2) {$1,5$}; 
    \node[draw, blue] (0101) at (-1,2) {$1,3$}; 
    \node[draw, red] (1001) at (1,2) {$1,3,5$}; 
    \node[draw, blue] (0110) at (0,2) {$3,5$}; 
    \node[draw, red] (1010) at (2,2) {$3$}; 
    \node[draw, red] (1100) at (3,2) {$5$}; 
    
    \node[draw, blue] (0111) at (-1,3) {$1,15$}; 
    \node[draw, red] (1011) at (0,3) {$1,5,15$}; 
    \node[draw, red] (1101) at (1,3) {$1,3,15$}; 
    \node[draw, red] (1110) at (2,3) {$3,5,15$}; 
    
    \node[draw, red] (1111) at (0.5,4) {$1$}; 

    \draw[blue] (0000) -- (0001);
    \draw[blue] (0000) -- (0010);
    \draw[blue] (0000) -- (0100);
    \draw (0000) -- (1000);
    
    \draw[blue] (0001) -- (0011);
    \draw[blue] (0001) -- (0101);
    \draw (0001) -- (1001);
    
    \draw[blue] (0010) -- (0011);
    \draw[blue] (0010) -- (0110);
    \draw (0010) -- (1010);
    
    \draw[blue] (0100) -- (0101);
    \draw[blue] (0100) -- (0110);
    \draw (0100) -- (1100);
    
    \draw[red] (1000) -- (1001);
    \draw[red] (1000) -- (1010);
    \draw[red] (1000) -- (1100);

    \draw[blue] (0011) -- (0111);
    \draw (0011) -- (1011);

    \draw[blue] (0101) -- (0111);
    \draw (0101) -- (1101);

    \draw[red] (1001) -- (1011);
    \draw[red] (1001) -- (1101);

    \draw[blue] (0110) -- (0111);
    \draw (0110) -- (1110);

    \draw[red] (1010) -- (1011);
    \draw[red] (1010) -- (1110);

    \draw[red] (1100) -- (1101);
    \draw[red] (1100) -- (1110);

    \draw[red] (1111) -- (1110);
    \draw[red] (1111) -- (1101);
    \draw[red] (1111) -- (1011);
    \draw (1111) -- (0111);

    \node (A) at (0.5,4.5) {$\textcolor{blue}{ P_\zero( \mathcal{S}^\one ) } \quad \textcolor{red}{ V( \mathcal{S}^\one ) \setminus P_\zero( \mathcal{S}^\one ) } $};

\end{scope}
\begin{scope}[yshift=-13cm, xshift= 10cm, xscale = 1.5, yscale = 1.5]

    \node[draw, violet] (0000) at (0.5,0) {$\emptyset$}; 
    
    \node[draw, orange] (0001) at (-1,1) {$1, 3, 5, 15$}; 
    \node[draw, violet] (0010) at (0,1) {$3,15$}; 
    \node[draw, violet] (0100) at (1,1) {$5,15$}; 
    \node[draw, violet] (1000) at (2,1) {$15$}; 
    
    \node[draw, orange] (0011) at (-2,2) {$1,5$}; 
    \node[draw, orange] (0101) at (-1,2) {$1,3$}; 
    \node[draw, orange] (1001) at (1,2) {$1,3,5$}; 
    \node[draw, violet] (0110) at (0,2) {$3,5$}; 
    \node[draw, violet] (1010) at (2,2) {$3$}; 
    \node[draw, violet] (1100) at (3,2) {$5$}; 
    
    \node[draw, orange] (0111) at (-1,3) {$1,15$}; 
    \node[draw, orange] (1011) at (0,3) {$1,5,15$}; 
    \node[draw, orange] (1101) at (1,3) {$1,3,15$}; 
    \node[draw, violet] (1110) at (2,3) {$3,5,15$}; 
    
    \node[draw, orange] (1111) at (0.5,4) {$1$}; 

    \draw (0000) -- (0001);
    \draw[violet] (0000) -- (0010);
    \draw[violet] (0000) -- (0100);
    \draw[violet] (0000) -- (1000);
    
    \draw[orange] (0001) -- (0011);
    \draw[orange] (0001) -- (0101);
    \draw[orange] (0001) -- (1001);
    
    \draw (0010) -- (0011);
    \draw[violet] (0010) -- (0110);
    \draw[violet] (0010) -- (1010);
    
    \draw (0100) -- (0101);
    \draw[violet] (0100) -- (0110);
    \draw[violet] (0100) -- (1100);
    
    \draw (1000) -- (1001);
    \draw[violet] (1000) -- (1010);
    \draw[violet] (1000) -- (1100);

    \draw[orange] (0011) -- (0111);
    \draw[orange] (0011) -- (1011);

    \draw[orange] (0101) -- (0111);
    \draw[orange] (0101) -- (1101);

    \draw[orange] (1001) -- (1011);
    \draw[orange] (1001) -- (1101);

    \draw (0110) -- (0111);
    \draw[violet] (0110) -- (1110);

    \draw (1010) -- (1011);
    \draw[violet] (1010) -- (1110);

    \draw (1100) -- (1101);
    \draw[violet] (1100) -- (1110);

    \draw (1111) -- (1110);
    \draw[orange] (1111) -- (1101);
    \draw[orange] (1111) -- (1011);
    \draw[orange] (1111) -- (0111);

    \node (A) at (0.5,4.5) {$\textcolor{violet}{ P_\one( \mathcal{S}^\one ) = V( \mathcal{S} ) } \quad \textcolor{orange}{ V( \mathcal{S}^\one ) \setminus V( \mathcal{S} ) } $};
\end{scope}
\end{tikzpicture}
\caption{Boolean algebra $V( \mathcal{S}^\one )$, with the corresponding parity-check code $P_\zero( \mathcal{S}^\one )$ and Boolean algebra $V( \mathcal{S} )$.}
\label{figure:boolean_semilattice}
\end{figure}

\section{The semiring generated by $C$ with $\F_2$} \label{section:semiring_C2}

In this section, we consider ``permutations computed modulo $2$.'' A permutation is a sum of cycles, where $C_d = d \to d-1 \to \dots \to 1 \to d$ is the \Define{cycle} of length $d$ for all $d \ge 1$. Therefore, let $C$ be the set of cycles and consider $S = \F_2C$.

\subsection{Basic properties} \label{subsection:S_basic_properties}

Clearly $S$ is a ring, with multiplicative identity $C_1$. In $S$ we have \cite[Proposition 3]{DFMMR19}
$$C_mC_n = {\rm gcd}(m,n)C_{\lcm(m,n)} = \begin{cases} C_{\lcm(m,n)} & \mbox{ if at least one of } m \mbox{ and }n \mbox{ is odd,}\\
0& \mbox{else.}\end{cases}$$
We begin by gathering some basic facts concerning the structure of $S$. Notation: For a positive integer $n$ we shall write $d(n) = {\rm max}\{m: 2^m \mid n\}$. If $d(n)=m$, then $C_n = C_{2^m}C_{n'}$ where $n=2^mn'$ and $n'$ is an odd positive integer. 

Let
\[
    S_0 = \left\{ \sum_{ k \text{ odd} } \alpha_k C_k \in S : \alpha_k \in \{0,1\}, \text{ finite number of nonzero } \alpha_k \right\},
\]
be the set of elements in $S$ where all the cycle lengths are odd.

Then, any element $s \in S$ can be uniquely written as 
\[
    s = \sum_{i \in \N} s_i C_{ 2^i },
\]
where $s_i \in S_0$ for all $i$ (and there is a finite number of nonzero $s_i$'s). We also denote the elements of $S$ that only have even cycles as 
\[
    S_+ = \left \{ \sum_{i \in \N} s_i C_{ 2^i} \in S: s_0 = 0 \right \}.
\]
We remark that $S_+$ is the multiplicative ideal generated by $C_2, C_4, C_8, \dots$.
Then, any element $s \in S$ can be uniquely written as $s = s_0 + s_+$, with $s_0 \in S_0$ and $s_+ \in S_+$. It is easily seen that for all $s,t \in S$
\[
     s_+t_+ = 0, \quad s^2 = s_0^2=s_0, \quad s_+ = s + s^2.
\]
The next lemma records several facts resulting from these simple  observations.

\begin{lemma}
\label{lem:basis}
Let $x,y \in S$.
\begin{enumerate}
\item \label{item:idempotent} $x$ is idempotent if and only if $x$ is a sum of odd length cycles.
\item \label{item:decomposition} If $x \neq 0$, then there exists $k \geq 1$ such that $x$ can be uniquely written as $x=\sum_{i=0}^k x_i C_{2^i}$ where each $x_i$ is idempotent and $x_k \neq 0$.
\item \label{item:x4=x2} For $x=\sum_{i=0}^k x_i C_{2^i}$, we have $x^2 = x_0$. Consequently, for any $x \in S$ we have $x^4 = x^2$.

\item \label{item:xy} For $x=\sum_i x_i C_{2^i}$ and $y = \sum_i y_i C_{2^i}$, we have $xy = z = \sum_i z_i C_{2^i}$ with $z_0 = x_0  y_0$ and $z_i = x_0 y_i + x_i y_0$ for all $i \ge 1$.

\item \label{item:unit} $x$ is a unit if and only if $x_0 = C_1$ (and in this case $x^2=C_1$).

\item \label{item:complement} $x_0 \leq y_0$ if and only if $x_0 + 1 \geq y_0 + 1$.
\end{enumerate}
\end{lemma}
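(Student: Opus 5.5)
The plan is to reduce everything to the multiplication rule $C_mC_n = \gcd(m,n)C_{\lcm(m,n)}$ taken modulo $2$, together with the decomposition $s = \sum_i s_i C_{2^i}$ with $s_i \in S_0$. The key product computation is $C_{2^i}C_{2^j}$. It equals $0$ when $i,j\ge 1$, since both lengths are even. It equals $C_{2^{\max(i,j)}}$ when $\min(i,j)=0$. Also, for odd $m$ the identity $C_{2^i m} = C_{2^i}C_m$ shows that $S_0$ is closed under multiplication and that every $x_i$ lies in $S_0$.

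First we prove item \ref{item:xy}. Expand $xy=\sum_{i,j}x_iy_jC_{2^i}C_{2^j}$. The terms with $i,j\ge1$ vanish. The terms with $i=j=0$ give $x_0y_0$, which is odd, so it is the $C_1$-coefficient. The terms with exactly one index zero give $x_0y_jC_{2^j}+x_iy_0C_{2^i}$, which supplies the coefficient $x_0y_i+x_iy_0$ of $C_{2^i}$. Uniqueness of the decomposition into the components $C_{2^i}$ with odd coefficients lets us read off $z_i$.

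Next, item \ref{item:x4=x2} follows from item \ref{item:xy} with $y=x$. We get $z_0=x_0^2$ and $z_i=2x_0x_i=0$ in $\F_2$. Since $C_kC_k=C_k$ for odd $k$ and cross terms cancel in characteristic $2$, we have $x_0^2=x_0$. Hence $x^2=x_0$, and then $x^4=x_0^2=x_0=x^2$.

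Item \ref{item:idempotent} follows as well. If $x$ is a sum of odd cycles then $x=x_0$ and $x^2=x_0=x$. Conversely, if $x^2=x$ then $x=x_0$, so $x\in S_0$.

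Item \ref{item:decomposition} is the uniqueness of $s=\sum s_iC_{2^i}$ already noted, combined with item \ref{item:idempotent}. We take $k$ to be the largest index with $x_k\neq0$. The statement says $k\ge1$; we can pad the sum with zero terms to allow this, or read $k\ge 0$.

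For item \ref{item:unit}, suppose $x_0=C_1$. Then $x^2=C_1$ by item \ref{item:x4=x2}, so $x$ is its own inverse. Conversely, suppose $xy=C_1$. Item \ref{item:xy} gives $x_0y_0=C_1$. In $S_0$, which is a Boolean ring, $x_0y_0=C_1$ forces $x_0=C_1$. To see this, multiply by $x_0$ to get $x_0 = x_0^2y_0 = x_0y_0 = C_1$.

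Item \ref{item:complement} is immediate in the Boolean ring $S_0$. There $x_0\le y_0$ means $x_0y_0=x_0$, and $u\mapsto u+1$ is the complement map, which is order-reversing. Here $1=C_1$. Concretely, $(x_0+1)(y_0+1)=x_0y_0+x_0+y_0+1$, and this equals $y_0+1$ exactly when $x_0y_0=x_0$.

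The only real care point is item \ref{item:xy}: checking that $S_0$ is closed under multiplication and that $x_iC_{2^i}$ genuinely lives in the $2^i$-component. Both follow from the gcd/lcm formula because $\gcd(2^i m, 2^j n)$ is odd whenever $\min(i,j)=0$ and $m,n$ are odd.
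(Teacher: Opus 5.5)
Your proof is correct and is essentially the paper's argument. Both reduce everything to $C_{2^i}C_{2^j}=0$ for $i,j\ge 1$ and $C_mC_n=C_{\mathrm{lcm}(m,n)}$ when one of $m,n$ is odd; you only reorder the steps by deriving $x^2=x_0$ (and hence item 1) from the product formula of item 4, where the paper computes $x^2=x_0$ directly from $x=x_0+x_+$ with $x_+^2=0$; the unit and complement items are handled identically. Your remark on item 2 is apt, but padding with zero terms cannot coexist with $x_k\neq 0$ when $x\in S_0$ (e.g.\ $x=C_1$), so the correct fix is to read $k\ge 0$.
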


\begin{proof}
\begin{enumerate}
    \item \label{item:proof_idempotent}
First suppose that $x$ is a sum of odd length cycles only, say $x = C_{i_1} + \cdots + C_{i_k}$ where each $i_j$ is odd. Then, since we are in characteristic $2$ we have: 
$$x^2 = (C_{i_1} + \cdots + C_{i_k})^2 = C_{i_1}^2 + \cdots + C_{i_k}^2 = C_{i_1} + \cdots + C_{i_k}.$$
Next, for an arbitrary element $x \in S$ we may write $x = x_0 + x_+$ where $x_0$ is a sum of odd cycles only and $x_+$ is a sum of even cycles only. Since the product of any two even length cycles is 0 it follows that $(x_+)^2=0$ and hence that $x^2 = (x_0)^2 + (x_+)^2 = x_0 + 0 = x_0$. Thus $x$ is idempotent if and only if $x$ is a sum of odd length cycles only.

    \item \label{item:proof_decomposition}
By definition of $S$ we have that $x$ may be uniquely written as $x = C_{i_1} + C_{i_2} + \cdots + C_{i_t}$  for some $1 \leq i_1 < i_2 < \cdots <i_t$. Let $M = \{d(i_1), \ldots, d(i_t)\}$ where for each $1 \leq j \leq t$ we recall that $d(i_j)$ denotes the maximal power of $2$ dividing $i_j$. For $m \in M$ let $I_m = \{s: d(i_s)=m\}$ and note that for each $s \in I_m$ write $i_s=2^mi'_s$ where $i'_s$ is odd. Then we may reorder the terms in this unique expression as follows:  
$$x = \sum_{m \in M} \sum_{s \in I_m} C_{i_s} = \sum_{m \in M} \sum_{s \in I_m} C_{2^m}C_{i'_s}= \sum_{m \in M} C_{2^m}\sum_{s \in I_m} C_{i'_s}.$$
Setting $x_m = \sum_{s \in I_m} C_{i'_s}$ it is then clear that each $x_m$ is a sum of cycles of odd length only, and hence by part (\ref{item:idempotent}), an idempotent. One can add in terms with $0$ coefficients to obtain the required form.

    \item \label{item:proof_x4=x2}
As noted in the proof of part \ref{item:proof_idempotent}, if $x = x_0 + x_+$ where $x_0$ is a sum of cycles of odd length only and $x_+$ is a sum of cycles of even length only, then $x^2 = x_0$. Thus $x^4 = (x^2)^2 = (x_0)^2 = x_0 = x^2$.

    \item \label{item:proof_xy}
We have $z = ( x_0 + \sum_{i \ge 1} x_i C_{2^i} ) ( y_0 + \sum_{j \ge 1} y_j C_{2^j} ) $. Since $C_{2^i} C_{2^j} = 0$ if $i,j > 0$, we obtain $z = x_0 y_0 + \sum_{i \ge 1} x_i y_0 C_{2^i} + \sum_{j \ge 1} x_0 y_j C_{2^j}$. 

    \item \label{item:proof_unit}
Suppose that $x$ is a unit. Then $xy = C_1$ for some $y$, and by part (\ref{item:xy}), $x_0y_0 = C_1$. Thus, $x_0 = x_0 x_0 y_0 = x_0 y_0 = C_1$. Conversely, if $x_0 = C_1$, then $x^2 = x_0 = C_1$, demonstrating that $x$ is unit.

\item For all $x_0, y_0 \in S_0$ we have $x_0 + 1 \leq y_0 +1$ if and only if $(x_0+1)(y_0+1) = x_0y_0 + x_0 + y_0 +1 = x_0 + 1$ if and only if $x_0y_0 + y_0 = 0$ if and only if $x_0y_0 = y_0$ if and only if $x_0 \geq y_0$.
\end{enumerate}

\end{proof}

\subsection{Boolean algebra of idempotents} \label{subsection:boolean_algebra_S0}

We first note that the set $S_0$ of idempotent elements forms an infinite Boolean algebra.

\begin{lemma}
\label{lem:lattice}
The set $E(S) = S_0$ of idempotent elements of $S$ forms an atomless Boolean algebra, where for $e, f \in E(S)$  we have $e \meet f = ef$, $e \join f = e + f + ef$ and $\neg e = C_1+e$.
\end{lemma}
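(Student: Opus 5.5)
The plan has three steps: show that $S_0$ is a commutative ring of idempotents (a Boolean ring), read off the Boolean algebra operations by the standard correspondence, and then prove there are no atoms.

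\emph{Step 1: $S_0$ is a Boolean ring.} By Lemma \ref{lem:basis}(\ref{item:idempotent}), $E(S)=S_0$. The set $S_0$ is closed under addition, since it consists of sums of odd cycles. It is closed under multiplication because $C_mC_n=C_{\lcm(m,n)}$ with $\lcm(m,n)$ odd whenever $m,n$ are odd. It contains $0$ and $C_1$. Hence $S_0$ is a commutative subring of $S$ in which every element is idempotent.

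\emph{Step 2: the Boolean algebra operations.} The standard correspondence between Boolean rings and Boolean algebras then gives a Boolean algebra with order $e\le f \iff ef=e$, meet $ef$, join $e+f+ef$, bottom $0$, top $C_1$ and complement $C_1+e$. This is exactly the construction recalled for $V(\mathcal{L})$ in Section \ref{subsection:semilattice_F2}. One could alternatively note that $S_0$ is the direct limit of the algebras $V(\mathcal{L}_N)$, with $\mathcal{L}_N$ the lattice of odd divisors of $N$. I would simply verify the axioms directly, since they are routine consequences of $e^2=e$ and characteristic $2$.

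\emph{Step 3: atomlessness (the main step).} Let $e\ne 0$ in $S_0$. I will construct $f\in S_0$ with $0<f<e$, i.e. $ef=f$, $f\ne 0$ and $f\ne e$. Write $e=\sum_{k\in K}C_k$ with $K$ a finite nonempty set of odd integers. Choose an odd prime $p$ not dividing any element of $K$, and set $f=e\,g$ where $g = C_1 + C_p$. Since $f=eg$, we get $ef=e^2g=f$, so $f\le e$.

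It remains to show $f\notin\{0,e\}$. We have $eC_p=\sum_{k\in K}C_{pk}$. Since $p\nmid k$, the map $k\mapsto pk$ is injective, so there is no cancellation in this sum, and every index $pk$ is divisible by $p$. Therefore the supports of $e$ and $eC_p$ are disjoint, and
\[
f = e + eC_p = \sum_{k\in K} C_k + \sum_{k\in K} C_{pk}.
\]
This element is nonzero and different from $e$, because $eC_p\neq 0$. So $0<f<e$, and $e$ is not an atom.

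The only delicate point is this support-disjointness argument, which guarantees there is no cancellation modulo $2$. It is handled by choosing $p$ coprime to every element of $K$, which is possible because there are infinitely many odd primes.
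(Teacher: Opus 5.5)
Your proof is correct and follows the paper's argument: closure of the idempotents under sum and product gives a Boolean ring, the standard Boolean ring/algebra correspondence gives the stated operations, and atomlessness comes from multiplying $e$ by a cycle $C_p$ for a new prime $p$. The differences are cosmetic. The paper uses $t = eC_p$ itself as the element strictly below $e$, whereas you use its relative complement $e + eC_p = e(C_1+C_p)$. Your explicit requirement that $p$ be odd is genuinely needed, since the paper's ``prime not dividing any cycle length'' would otherwise allow $p=2$, and then $eC_2 \notin S_0$.
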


\begin{proof}
Since $S$ is commutative, it is clear that a product of idempotents is idempotent. Moreover, since $S$ is of characteristic $2$, it is also easy to see that a sum of idempotents is idempotent. Thus $E(S)$ is a subring of $S$, and clearly $E(S)$ is a Boolean ring. It well known \cite[Chapter 2]{GH09} that any Boolean ring can be viewed as a Boolean algebra with respect to the given operations.  

Let us show that it is atomless. Let $s \in E(S)$ be a nonzero element, say $s = \sum_i \alpha_i C_i$ and let $p$ be a prime number that does not divide any cycle length in $s$. Then $t = sC_p = \sum_i \alpha_i C_{ip}$ satisfies $t \notin \{ 0, s \}$ and $t \le s$ (i.e. $ts = t$), hence $s$ is not an atom.
\end{proof}

\begin{remark}
\label{rem:atomlessB}
There is a unique (up to isomorphism) countably infinite atomless Boolean algebra, e.g. the interval algebra of the rationals \cite[Chapter 16]{GH09}. Therefore, $(E(S), \meet, \join, \neg)$ is isomorphic to that interval algebra.
\end{remark}

\paragraph{Dividing in $S_0$.}

By using the Boolean algebra structure of $S_0$, we can easily solve the equation $ax = b$ when $a, b \in S_0$ only have odd cycle lengths. Note that if $ax = b$ with $a,b \in S_0$, then necessarily $x \in S_0$ as well (unless $a = b = 0$, in which case any $x \in S$ is a solution).

We shall need the general division formula for any Boolean algebra $\mathcal{V}$ (see Section \ref{section:injective}, notably Theorem \ref{theorem:division_Lh} and Theorem \ref{theorem:division_LC}). We remark that in any Boolean algebra, $a \ge b$ is equivalent to $b \le a + b + \one_\mathcal{V}$; as such, the interval $[b, a + b + \one_\mathcal{V} ]$ is non-empty if and only if $ab = b$.

\begin{proposition} \label{proposition:ax=b_S0}
Let $\mathcal{V}$ be a Boolean algebra and $a,b \in \mathcal{V}$. Then the solutions to $a x = b$ are the interval $[ b, a + b + \one_\mathcal{V} ]$ in $\mathcal{V}$.
\end{proposition}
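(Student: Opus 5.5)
We work in the Boolean ring associated with $\mathcal{V}$: the product is the meet, $a+b$ is the symmetric difference, and $a\le b$ means $ab=a$. The plan is a double inclusion, reading the interval $[b, a+b+\one_\mathcal{V}]$ as the set of $x$ with $bx=b$ and $x(a+b+\one_\mathcal{V}) = x$. A preliminary observation fixes the edge case. As remarked before the statement, this interval is nonempty exactly when $ab=b$. Also, any solution satisfies $ab = a(ax) = ax = b$. So when $ab\neq b$ both sides are empty, and we may assume $ab=b$ throughout.

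\textbf{Every solution lies in the interval.} Suppose $ax=b$. Then $bx = axx = ax = b$, so $x \ge b$. Next compute
\[
x(a+b+\one_\mathcal{V}) = ax + bx + x = b + b + x = x,
\]
using characteristic $2$. Hence $x \le a+b+\one_\mathcal{V}$.

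\textbf{Every element of the interval is a solution.} Let $b \le x \le a+b+\one_\mathcal{V}$. Multiplying $x = x(a+b+\one_\mathcal{V})$ by $a$ gives
\[
ax = ax(a+b+\one_\mathcal{V}) = ax + abx + ax = abx .
\]
Since $ab=b$ and $bx=b$, we get $abx = bx = b$, so $ax=b$.

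No step is a real obstacle. The only point needing care is the identification of the interval with the two multiplicative conditions, and the assumption $ab=b$. That assumption is used only in the reverse inclusion, and it holds automatically whenever the interval is nonempty, by the preliminary observation.
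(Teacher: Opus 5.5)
Your proof is correct and follows essentially the same route as the paper: you first reduce to the case $ab=b$, and you use the same forward computation $bx=b$ and $x(a+b+\one_\mathcal{V})=x$. In the converse you multiply by $a$ to get $ax=abx$ and then invoke $ab=b$, whereas the paper expands $(a+b+\one_\mathcal{V})x=x$ to obtain $ax=bx=b$ without needing $ab=b$; this difference is only cosmetic.
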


\begin{proof}
There exists $x$ such that $ax = b$ if and only if $ab = b$ (since $ax = b$ implies $b = ax = aax = ab$). Assume then that $ab = b$. If $ax = b$, then by a similar argument as above, we have $xb = b$. Therefore, $x \ge b$ and $x ( b + a + \one_\mathcal{V} ) = b + b  +x= x$, hence $x \le a + b + \one_\mathcal{V}$. Conversely, if $b \le x \le a + b + \one_\mathcal{V}$, then $( a + b + \one_\mathcal{V} ) x = x$ and $bx = b$, giving
\[
    ax = ax + bx + x + bx + x = ( a + b + \one_\mathcal{V} )x + bx + x = b.
\]
\end{proof}

\paragraph{Finite case.}

We now focus on the finite lattice generated by odd cycles that divide a given length. Let $k$ be an odd positive integer, and let $D(k)$ denote the set of divisors of $k$. Hence we consider the lattice $\mathcal{C}_k = \{ C_i : i \ | \ k  \}$, with the meet operation $C_i C_j = C_{\lcm(i,j)}$. For instance, the lattice $\mathcal{C}_{15}$ is displayed in Figure \ref{figure:boolean_semilattice}. The elements of $S$ whose cycle lengths divide $k$ then form the lattice $V( \mathcal{C}_k )$. Theorem \ref{theorem:atoms} then gives an abstract expression for its atoms; however, we can be much more explicit, as seen below.

The lattice $\mathcal{C}_k$ is isomorphic to the lattice $\mathcal{L} = ( L = D(k), \join = \mathrm{gcd}, \meet = \lcm, \zero = k, \one = 1 )$. Recall that for element $j \in L$, $j^c$ denotes the set of elements of $L$ covered by $j$. In the lattice $\mathcal{L}$, for any $j \in D(k)$ we denote $m(j) = \bigmeet j^c$. For instance, for $k = 45$, $j = 3$ covers $9$ and $15$, so that $j^c = \{9, 15\}$ and $m(j) = \lcm(9, 15) = 45$ (see Example \ref{example:C45} below). We note that the interval $[m(j), j] = \{ l : j \ | \ l \text{ and } l \ | \ m(j) \}$ is never empty. Now, for any $j \in D(k)$, let $T_j \in \mathcal{C}_k$ be defined as 
\[
    T_j = \sum_{ l \in [m(j), j] } C_l.
\]

\begin{theorem} \label{theorem:Ti_atoms}
The set of atoms of $V( \mathcal{C}_k )$ is $\{ T_j : j \in D(k) \}$. In particular, for all $i \in D(k)$,
\[
    C_i = \sum_{j \in D(k) : i \ | \ j} T_j.
\]

\end{theorem}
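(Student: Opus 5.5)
By Theorem \ref{theorem:atoms}, applied to the lattice $\mathcal{L} = (D(k), \gcd, \lcm, k, 1)$ (isomorphic to $\mathcal{C}_k$), the atoms of $V(\mathcal{C}_k)$ are exactly the elements $a^j$ for $j \in D(k)$. Each $a^j$ is characterised by Lemma \ref{lemma:a_star} as the unique subset of $j^\downarrow$ that contains $j$ and meets $m^\uparrow$ in an even number of elements for every $m < j$. So the first claim reduces to showing $T_j = a^j$ for each $j$. Concretely, we must check that $\{C_l : l \in [m(j), j]\}$ contains $C_j$ and, for every $m$ with $j \mid m \mid k$ and $m \neq j$, contains an even number of $l$ with $j \mid l \mid m$. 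That last set is $[m(j),j] \cap [m,j]$.

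The key step is the parity computation. Write $j^c = \{ jp : p \text{ prime}, p \mid k/j\}$, i.e. the elements covered by $j$ are obtained by multiplying by one prime $p$ of $k/j$. Then $m(j) = j \cdot \prod_{p \in P} p$, where $P$ is the set of primes dividing $k/j$. The interval $[m(j), j]$ is therefore $\{ jq : q \text{ squarefree}, q \mid \prod_{p\in P} p\}$, a Boolean lattice on $P$.

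Given $m$ with $j \mid m$ and $m \neq j$, let $Q$ be the (nonempty) set of primes dividing $m/j$. The elements $l \in [m(j),j]$ with $l \mid m$ are exactly $jq$ with $q$ a squarefree product of a subset of $Q$, so there are $2^{|Q|}$ of them, which is even. Together with $C_j \in T_j$ and $T_j \subseteq j^\downarrow$, Lemma \ref{lemma:a_star} gives $T_j = a^j$. (Oddness of $k$ plays no role here beyond ensuring all $C_l$ lie in $S_0$, where the product is the lattice meet $\lcm$.)

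For the expansion of $C_i$, we use that in the Boolean algebra $V(\mathcal{C}_k)$ every element is the sum (a disjoint join) of the atoms below it. Here $T_j \le C_i$ means $C_i T_j = T_j$. Since $C_i C_l = C_{\lcm(i,l)}$, we get $C_i T_j = \sum_{l \in [m(j),j]} C_{\lcm(i,l)}$. If $i \mid j$, every $\lcm(i,l) = l$, so $C_i T_j = T_j$. Otherwise $C_i T_j \neq T_j$, because the top term $C_j$ would become some $C_{\lcm(i,l)}$ with $\lcm(i,l) \neq j$, and $T_j$ is an atom, so $C_iT_j = 0$. Alternatively, one can invoke $e_l a = 0$ for atoms via Corollary \ref{corollary:ela=0}-style reasoning. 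Hence $C_i = \sum_{j : i \mid j} T_j$.

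I expect the main obstacle to be correctly identifying $j^c$ and hence $m(j)$ in the divisor lattice, with its reversed order. After that the parity count is just the fact that a nonempty set has an even number of subsets.
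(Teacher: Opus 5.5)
Your proof is correct, but it takes a different route from the paper's.

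\textbf{What the paper does.} The paper never identifies $T_j$ with $a^j$. It shows directly that each $T_j$ is an atom via Lemma \ref{lemma:a_atom}, by proving $C_iT_j\in\{T_j,0\}$ for every $i\in D(k)$. For $i\nmid j$ this needs an explicit computation. It partitions $[m(j),j]$ into the fibres of $l\mapsto \lcm(l,i)$ and checks that each fibre has $2^{|\Sigma|}\ge 2$ elements, where $\Sigma=\{p : i_p>j_p\}$. This gives $C_iT_j=0$ outright, and the expansion of $C_i$ then follows from $C_i=\sum_j C_iT_j$.

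\textbf{What you do differently.} You check the defining parity property of Lemma \ref{lemma:a_star} for $T_j$ inside $j^\downarrow$ and let Theorem \ref{theorem:atoms} do the rest. The only count you need is that the nonempty prime set $Q$ of $m/j$ has $2^{|Q|}$ subsets. This is simpler than the paper's fibre count. It also gives ``these are all the atoms'' for free, from the bijection $j\mapsto a^j$. The paper's argument leaves this implicit: it relies on the $T_j$ being $|D(k)|$ distinct atoms in an algebra with exactly $|D(k)|$ atoms.

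\textbf{What it costs, and one point to tighten.} In your version, $C_iT_j=0$ for $i\nmid j$ is no longer computed; it is deduced from atomicity once you know $C_iT_j\neq T_j$. Your justification of that step should be stated a little more carefully. It is not only the top term whose image matters: you must rule out that any term of $C_iT_j$ equals $C_j$. The cleanest way is to note that every index $\lcm(i,l)$ is a multiple of $i$, so $C_j$ cannot occur in $C_iT_j$ when $i\nmid j$.
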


\begin{proof} Since the $T_j$'s are nonzero, Lemma \ref{lemma:a_atom} shows that we only need to prove that $C_i T_j \in \{T_j, 0\}$ for all $i,j \in D(k)$.
We have $T_j = \sum_{l \in [m(j), j]} C_l$. Firstly, if $i$ divides $j$, then $i$ divides any $l$ in $[m(j), j]$, hence $C_i C_l = C_l$ and $C_i T_j = T_j$. Henceforth, we assume that $i$ does not divide $j$.

Suppose $k$ is factored as $k = \prod_{p \in P} p^{k_p}$ where $P$ is the set of prime factors of $k$; for notational convenience we shall view it as the vector $k = (k_p : p \in P)$ and more generally we shall write an element $t \in D(k)$ as $t = (t_p : p \in P)$ where $0 \le t_p \le k_p$.

Suppose then that $j = (j_p : p \in P)$ and $i = (j_p : p \in P)$ with $0 \le j_p \le k_p$ and $0 \le i_p \le k_p$. Let $\Sigma = \{ p \in P : i_p > j_p \}$. Since $i$ does not divide $j$ we have $|\Sigma|>0$. Let $\Theta = \{ p : j_p = k_p \}$ and $\Gamma = P \setminus \Theta$. Note also that  $\Sigma \subseteq \Gamma$. Writing $m(j) = (m_p : p \in P)$, for each $p \in P$ we have
\[
    m_p = \min\{ j_p + 1, k_p \} = \begin{cases}
        j_p &\text{if } p \in \Theta \\
        j_p + 1 &\text{if } p \in \Gamma.
    \end{cases}
\]
Let $l \in [m(j),j]$ so that $l_p = j_p + l(\epsilon)_p$ for all $p \in \Gamma$ and $l(\epsilon)_p \in \{0,1\}$. Then writing $q = \lcm(l,i) = (q_p : p \in P)$ we have
\[
    q_p = \max\{ i_p, l_p \} = \begin{cases}
        i_p &\text{if } p \in \Sigma \\
        l_p = j_p + l(\epsilon)_p &\text{if } p \in \Gamma \setminus \Sigma \\
        j_p &\text{if } p \in \Theta.
    \end{cases}
\]
Now let $Q$ denote the subset of $D(k)$ consisting of elements $(q_p: p \in P)$ with $q_p = i_p$ for all $p \in \Sigma$ and $q_p = j_p$ for all $p \in \Theta$.  
Then, for a given $q \in Q$ we see that $$A_q:=\{l \in [m(j), j] : \lcm(l,i) = q\} = \{ l \in [m(j),j] : l(\epsilon)_{\Gamma \setminus \Sigma}  = (q - j)_{\Gamma \setminus \Sigma} \}$$ and hence $|A| = 2^{| \Sigma | } \geq 2$. Putting this together, we obtain
\[
    C_i T_j = \sum_{l \in [m(j), j]} C_iC_l = \sum_{l \in [m(j), j] }C_{{ \rm lcm}(l,i)}  =\sum_{q \in Q} \sum_{l \in A_q} C_q = 0.
\]
Thus, 
\[
    C_i = \sum_{j \in D(k)} C_i T_j =  \sum_{j \in D(k) : i \ | \ j} T_j.
\]
\end{proof}

\begin{corollary}
For all $i$, $C_i = \sum_{j : i \ | \ j} T_j$.
\end{corollary}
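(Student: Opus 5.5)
The corollary is the second assertion of Theorem~\ref{theorem:Ti_atoms}, freed from a fixed ambient $k$. The plan is to reduce to that finite setting by choosing a suitable odd $k$, and then make explicit the two ingredients hidden in the theorem's one-line derivation.

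\textbf{Reduction.} Given an odd $i$, I choose any odd $k$ with $i \mid k$; the natural choice is $k = i$, or any common odd multiple if several cycles are involved. Then $C_i \in \mathcal{C}_k \subseteq V(\mathcal{C}_k)$, and the $T_j$ are read as the atoms of $V(\mathcal{C}_k)$. Since $T_j$ is defined through $m(j) = \bigmeet j^c$, which depends on $k$, the statement is to be understood relative to this chosen $k$, with $j$ ranging over $D(k)$.

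\textbf{Step 1: the top element is the sum of the atoms.} In the finite Boolean algebra $V(\mathcal{C}_k)$ the top element is $e_\one = C_1$. Distinct atoms multiply to $0$, so their join equals their sum. The top of a finite Boolean algebra is the join of all its atoms, hence $C_1 = \sum_{j \in D(k)} T_j$. By Theorem~\ref{theorem:Ti_atoms} (or Theorem~\ref{theorem:atoms}) the atoms are exactly the $T_j$.

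\textbf{Step 2: how $C_i$ acts on each atom.} Multiplying Step 1 by $C_i$ gives $C_i = \sum_j C_i T_j$. By Lemma~\ref{lemma:a_atom}, each product $C_i T_j$ lies in $\{T_j, 0\}$. The computation in the proof of Theorem~\ref{theorem:Ti_atoms} pins this down:
\begin{itemize}
\item if $i \mid j$, then $i$ divides every $l \in [m(j), j]$, so $C_i T_j = T_j$;
\item if $i \nmid j$, the terms of $C_i T_j$ cancel in pairs, since each fibre $A_q$ has size $2^{|\Sigma|}$, which is even, so $C_i T_j = 0$.
\end{itemize}
Summing over $j$ gives $C_i = \sum_{j \in D(k),\, i \mid j} T_j$.

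\textbf{Expected obstacle: the meaning of ``for all $i$''.} This is interpretive rather than technical, because $T_j$ depends on the chosen $k$. I would therefore add a remark that the identity holds in $V(\mathcal{C}_k)$ for every odd $k$ divisible by $i$. For even $i = 2^m i'$ with $i'$ odd, one writes $C_i = C_{2^m} C_{i'}$ and applies the odd case to $C_{i'}$. This gives $C_i = C_{2^m} \sum_{i' \mid j} T_j$, which is the natural reading of the corollary for even lengths.
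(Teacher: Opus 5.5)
Your proposal is correct and is essentially the paper's own argument: the corollary is the last line of the proof of Theorem~\ref{theorem:Ti_atoms}, namely $C_i = \sum_{j \in D(k)} C_i T_j = \sum_{j \in D(k),\, i \mid j} T_j$. That line rests on exactly your two ingredients: $C_1$ is the sum of the pairwise orthogonal atoms $T_j$, and $C_i T_j$ equals $T_j$ if $i \mid j$ and $0$ otherwise. Your remark that the $T_j$ depend on an ambient odd $k$ with $i \mid k$ makes explicit what the paper leaves implicit; note that the choice $k = i$ gives only the trivial identity $C_i = T_i$, so the content lies in general such $k$. Your extension to even $i$ likewise goes beyond what the paper states.
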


\begin{example} \label{example:C45}
For $k = 45$, the lattice $\mathcal{C}_{45}$ and the atoms of $V( \mathcal{C}_{45} )$ are given below.

\begin{tikzpicture}
    \node (1) at (1,3) {$C_1$};
    \node (3) at (0,2) {$C_3$};
    \node (5) at (2,2) {$C_5$};
    \node (9) at (0,1) {$C_9$};
    \node (15) at (1,1) {$C_{15}$};
    \node (45) at (1,0) {$C_{45}$};

    \path[draw]
    (1) to (3)
    (1) to (5)
    (3) to (9)
    (3) to (15)
    (5) to (15)
    (9) to (45)
    (15) to (45);

    \node[text width = 10cm] (T) at (6,1.5) {\begin{align*}
    T_1 &= C_1 + C_3 + C_5 + C_{15} \\
    T_3 &= C_3 + C_9 + C_{15} + C_{45} \\
    T_5 &= C_5 + C_{15} \\
    T_9 &= C_9 + C_{45} \\
    T_{15} &= C_{15} + C_{45} \\
    T_{45} &= C_{45}.
\end{align*}};

    \node[text width = 10 cm] (C) at (12,1.5) {
    \begin{align*}
    C_1 &= T_1 + T_3 + T_5 + T_9 + T_{15} + T_{45} \\
    C_3 &= T_3 + T_9 + T_{15} + T_{45} \\
    C_5 &= T_5 + T_{15} + T_{45} \\
    C_9 &= T_9 + T_{45} \\
    C_{15} &= T_{15} + T_{45} \\
    C_{45} &= T_{45}.
\end{align*}
    };
\end{tikzpicture}

\end{example}

\subsection{Division in $S$} \label{subsection:division_in_S}

We now consider the division equation $ax = b$ in $S$.
First of all, it is worth noting that in general, there could be an infinite number of solutions, e.g. $C_2 x = 0$ holds for any $x \in S_+$. This even holds for odd lengths only: $(C_3 + C_5)x = 0$ holds for instance for any $x \in S C_{15}$. But we could ask for solutions of bounded size. In fact, we prove below that if a solution exists, then it always contains a solution  without any ``artefacts'', i.e. cycle lengths that one would not naturally expect from a solution. 

For all $a \in S$, let $Q_a$ denote the set of cycle lengths that appear in $a$, so that $a = \sum_{q \in Q_a} C_q$. Recall that $q = q' 2^{ d( q ) }$ for any $q$; then let $K(a) = \lcm( q' : q \in Q_a )$ and $N(a) = \max( d(q) : q \in Q_a )$. For instance, if $a = (C_1 + C_5) + C_7 C_2 + C_5C_8$, we have $K(a) = \lcm( 1, 5, 7, 5 ) = 35$ and $N(a) = \log_2 8 = 3$.

Now, for any $a = \sum_{q \in Q_a} C_q$ and $n, k \in \N$, let $a_{-n}$ denote the element obtained from $a$ by omitting any cycles $C_q$ with lengths divisible by $2^{q+1}$ and let $a_{|k}$ denote the element obtained from $a$ by omitting any cycles $C_q$ with lengths not divisible by $k$. Thus
\begin{align*}
    a_{-n} &= \sum_{q \in Q_a : d(q) \le n} C_q =  \sum_{i \le n} a_i C_{2^i},\\
    a_{ | \ k} &= \sum_{q \in Q_a : q \ | \ k} C_q.
\end{align*}
We note that $(a_{ -n})_{| \ k} = (a_{| \ k})_{-n}$.

\begin{proposition} \label{proposition:division_small_cycles}
Let $a,b, x \in S$, and $n, k \in \N$ be such that $\max( N(a), N(b) ) \le n$ and $\lcm( K( a ) , K( b ) ) \ | \ k$. Then $ax = b$ only if $a (x_{-n})_{| \ k} = b$.
\end{proposition}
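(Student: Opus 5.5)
The plan is to treat the truncation $x \mapsto (x_{-n})_{|\,k}$ as an additive projection $\pi$ on $S$, and to prove that multiplication by $a$ commutes with it whenever $N(a) \le n$ and $K(a) \mid k$. The statement then follows from a one-line computation.

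We read $y_{|\,k}$ as keeping the cycles $C_r$ whose \emph{odd part} $r'$ divides $k$. This matches the role of $K(\cdot)$, and it is the reading under which the statement is meaningful. Define $\pi(y) = (y_{-n})_{|\,k}$. Then $\pi$ keeps exactly the cycles $C_r$ of $y$ with $d(r) \le n$ and $r' \mid k$. Since $\pi$ acts coordinatewise on the basis of cycles, it is additive over $\F_2$. By hypothesis every cycle of $a$ and of $b$ survives $\pi$, so $\pi(a) = a$ and $\pi(b) = b$.

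The key step is the identity $\pi(ay) = a\,\pi(y)$ for all $y \in S$. By bilinearity it suffices to check it for $a$ replaced by a single cycle $C_q$ with $d(q) \le n$ and $q' \mid k$, and for $y = C_r$. Recall that $C_q C_r = C_{\lcm(q,r)}$ if one of $q, r$ is odd, and $0$ otherwise. Note that $\lcm(q,r)' = \lcm(q', r')$ and $d(\lcm(q,r)) = \max(d(q), d(r))$. There are three cases.
\begin{enumerate}
\item If $r' \nmid k$, then $\pi(C_r) = 0$. Also $\lcm(q', r') \nmid k$, so $\pi(C_q C_r) = 0$, and both sides vanish.
\item If $r' \mid k$ but $d(r) > n$, then $\pi(C_r) = 0$, and $r$ is even. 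If $q$ is even, the product is already $0$. If $q$ is odd, the product is $C_{\lcm(q,r)}$ with $d = d(r) > n$, which $\pi$ kills. Again both sides vanish.
\item If $C_r$ is kept by $\pi$, then $\pi(C_r) = C_r$. Either the product is $0$, or it is $C_{\lcm(q,r)}$, which has odd part dividing $k$ and $2$-adic valuation $\max(d(q), d(r)) \le n$. In the latter case $\pi$ fixes it, so both sides agree.
\end{enumerate}

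With this identity in hand, if $ax = b$ then
\[
    b = \pi(b) = \pi(ax) = a\,\pi(x) = a\,(x_{-n})_{|\,k},
\]
which is the claim.

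The main obstacle is not computational but bookkeeping. One has to fix the correct meaning of the truncation $_{|\,k}$ (via odd parts) and of $_{-n}$ (valuation at most $n$). One also has to observe that case~2 genuinely uses the vanishing $C_{2^i}C_{2^j} = 0$ for $i, j \ge 1$, i.e.\ Lemma~\ref{lem:basis}(\ref{item:xy}). Without that vanishing, a cycle of large $2$-adic valuation in $x$ could not interact with an even cycle of $a$ to produce something inside the range of $\pi$. It is exactly this vanishing that guarantees that no such interaction occurs, so that the product can never land back in that range.
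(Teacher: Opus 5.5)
Your proof is correct and is essentially the paper's argument. The paper simply asserts that $\pi\colon s \mapsto (s_{-n})_{|\,k}$ is multiplicative and fixes $a$ and $b$; your case check verifies exactly this, in the form $\pi(ay) = a\,\pi(y)$, and your odd-part reading of $(\cdot)_{|\,k}$ is the one the paper's claim $a = (a_{-n})_{|\,k}$ implicitly requires.

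One correction to your closing remark: case 2 does not need $C_{2^i}C_{2^j}=0$. Since $d(\lcm(q,r)) = \max(d(q),d(r)) \ge d(r) > n$, the map $\pi$ kills the product whatever the parity of $q$, so $\pi$ is multiplicative even over $\N$.
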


\begin{proof}
We first note that, by assumption, $a = (a_{-n})_{| \ k}$ and $b = (b_{-n})_{| \ k}$. Now, let $x$ be a solution to $ax = b$. The maps $s \mapsto s_{-n}$ and $s \mapsto s_{| \ k}$ are semigroup homomorphisms on the multiplicative semigroup of $S$, that is: for all $s, t \in S$ we have $( (s t)_{-n})_{| \ k} = (s_{-n})_{| \ k} (t_{-n})_{| \ k}$. In particular, 
\[
    a(x_{-n})_{| \ k} = (a_{-n})_{| \ k} (x_{-n})_{| \ k} = ( (ax)_{-n} )_{| \ k} = ( b_{-n} )_{| \ k} = b.
\]
\end{proof}

We now give the main characterisation of the set of solutions to the equation $ax = b$ in $S$.

\begin{theorem} \label{theorem:ax=b}
Let $a, b, x \in S$. For all $i \in \N$, let $l_i = b_i + a_0 b_i + a_i b_0$ and $u_i = l_i + a_i + 1$ and let
\begin{align*}
    \lambda_0 &= \bigjoin_{i \in \N} l_i \in S_0, \\
    \upsilon_0 &= \bigmeet_{i \in \N} u_i \in S_0.
\end{align*}
For all $i \ge 1$, let
\begin{align*}
    \lambda_i &= a_0 b_i + a_i b_0, \\
    \upsilon_i &= a_0 b_i + a_i b_0 + a_0 + 1.
\end{align*}

Then $ax = b$ if and only if $x_i \in [ \lambda_i, \upsilon_i ]$ for all $i \in \N$. 
Moreover, the set $\{ x \in S: ax = b \}$ is non-empty if and only if $\lambda_0 \le \upsilon_0$.
\end{theorem}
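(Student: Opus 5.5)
The plan is to reduce everything to coordinatewise equations in the Boolean algebra $S_0$ using Lemma \ref{lem:basis}(\ref{item:xy}). Writing $x=\sum_i x_iC_{2^i}$, that item says $ax=b$ holds if and only if
\[
    a_0x_0=b_0 \qquad\text{and}\qquad a_0x_i + a_ix_0 = b_i \quad \text{for all } i\ge 1,
\]
where every quantity lies in $S_0$. By Lemma \ref{lem:lattice}, $S_0$ is a Boolean algebra with top $C_1=1$. Only finitely many $a_i,b_i$ are nonzero, so for $i$ large we have $l_i=0$ and $u_i=1$. Hence $\lambda_0$ and $\upsilon_0$ are really a finite join and a finite meet, and they are well defined.

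First I treat the equations for $i\ge1$, with $x_0$ regarded as given. The equation $a_0x_i = b_i + a_ix_0$ is a division equation in $S_0$. By Proposition \ref{proposition:ax=b_S0}, it is solvable exactly when $a_0(b_i+a_ix_0)=b_i+a_ix_0$, that is, when
\[
    (1+a_0)a_ix_0=(1+a_0)b_i, \tag{$\ast$}
\]
and then its solutions form the interval $[\,b_i+a_ix_0,\ a_0+b_i+a_ix_0+1\,]$. Now use $a_0x_0=b_0$, which gives $a_0a_ix_0=a_ib_0$. Together with $(\ast)$ this yields $a_ix_0 = a_ib_0+(1+a_0)b_i$, so $b_i+a_ix_0 = a_0b_i+a_ib_0=\lambda_i$. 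The interval therefore becomes exactly $[\lambda_i,\upsilon_i]$, and it no longer depends on $x_0$.

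Second, I handle the constraints on $x_0$. For $i=0$ we have $l_0=b_0$ and $u_0=b_0+a_0+1$, so by Proposition \ref{proposition:ax=b_S0} the equation $a_0x_0=b_0$ is equivalent to $x_0\in[l_0,u_0]$. It remains to show that, given $a_0x_0=b_0$, condition $(\ast)$ is equivalent to $l_i\le x_0\le u_i$. This is the step I expect to be the main obstacle. The conditions are identities in the variables $a_0,a_i,b_0,b_i,x_0$ of a Boolean algebra, so I would verify the equivalence in the two-element algebra, using Stone representation to evaluate pointwise. The check is a truth table over the cases with $b_0\le a_0$ and $x_0a_0=b_0$.
\begin{itemize}
    \item When $a_0=1$: then $x_0=b_0$, $l_i = a_ib_0$ and $u_i=1+b_i+a_ib_0+b_i$. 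Also $(\ast)$ is vacuous, and both bounds are automatically satisfied by $x_0=b_0$.
    \item When $a_0=0$: then $b_0=0$, $l_i=b_i$ and $u_i=b_i+a_i+1$, while $(\ast)$ reads $a_ix_0=b_i$. This is exactly $x_0\in[b_i,b_i+a_i+1]$, again by Proposition \ref{proposition:ax=b_S0}.
\end{itemize}
Intersecting over all $i$ then gives $x_0\in[\lambda_0,\upsilon_0]$. This proves the first claim.

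Finally, for nonemptiness, the necessity of $\lambda_0\le\upsilon_0$ is immediate from the first claim. For sufficiency, suppose $\lambda_0\le\upsilon_0$. Then $b_0=l_0\le u_0$, so $b_0\le a_0$. From this, $a_0\lambda_i=\lambda_i$, which means $\lambda_i\le\upsilon_i$ for each $i\ge1$. Choose $x_0=\lambda_0$ and $x_i=\lambda_i$ for $i\ge1$. This has finite support, since $\lambda_i=0$ whenever $a_i=b_i=0$, and so it is an element of $S$ solving $ax=b$.
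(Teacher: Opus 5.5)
Your argument is correct and is essentially the paper's. The paper first proves (Lemma \ref{lemma:ax=b}) that $ax=b$ is equivalent to the decoupled system $a_0x_0=b_0$, $a_ix_0=l_i$, $a_0x_i=\lambda_i$, and then applies Proposition \ref{proposition:ax=b_S0} to each equation. Your identity $a_ix_0=a_ib_0+(1+a_0)b_i$ is exactly $a_ix_0=l_i$. The reverse implication $a_ix_0=l_i\Rightarrow(\ast)$ follows by multiplying by $1+a_0$ and using $(1+a_0)b_0=0$, so this one-line computation can replace the Stone-representation truth table (your anticipated ``main obstacle'').

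There is one slip in that table. When $a_0=1$ you get $u_i=1+a_i+a_ib_0$, not $1+a_ib_0$; with your formula the bound $b_0\le u_i$ would fail for $a_i=b_0=1$. The claim $x_0=b_0\le u_i$ is nevertheless true, because $b_0u_i=b_0$.
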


The proof of Theorem \ref{theorem:ax=b} is based on the following lemma.

\begin{lemma} \label{lemma:ax=b}
For any $a,b,x \in S$, we have $ax = b$ if and only if the following system of equations holds:
\begin{align}
    \label{equation:a0x0}
    a_0 x_0 &= b_0 \\
    \label{equation:aix0}
    a_i x_0 &= b_i + b_i a_0 + a_i b_0 \text{ for all } i \ge 1\\
    \label{equation:a0xi}
    a_0 (x_i + b_i) &= a_i b_0 \text{ for all } i \ge 1.
\end{align}
\end{lemma}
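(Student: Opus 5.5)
The plan is to expand both sides in the unique decomposition $s = \sum_i s_i C_{2^i}$ of Lemma \ref{lem:basis}(\ref{item:decomposition}) and compare components. By Lemma \ref{lem:basis}(\ref{item:xy}), $ax = b$ holds if and only if $a_0 x_0 = b_0$ and $a_0 x_i + a_i x_0 = b_i$ for all $i \ge 1$. Call this system (*). It remains to show that (*) is equivalent to the system \eqref{equation:a0x0}--\eqref{equation:a0xi}. We will use freely that all $a_i, b_i, x_i$ lie in the Boolean ring $S_0$, so they are idempotent and we work in characteristic $2$.

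For the forward direction, assume (*). Equation \eqref{equation:a0x0} is the first equation of (*). For $i \ge 1$, multiply $a_0 x_i + a_i x_0 = b_i$ by $a_0$. Using $a_0^2 = a_0$ and $a_0 x_0 = b_0$, this gives $a_0 x_i + a_i b_0 = a_0 b_i$, which rearranges to $a_0(x_i + b_i) = a_i b_0$; this is \eqref{equation:a0xi}. Substituting $a_0 x_i = a_0 b_i + a_i b_0$ back into $a_i x_0 = b_i + a_0 x_i$ gives $a_i x_0 = b_i + a_0 b_i + a_i b_0$; this is \eqref{equation:aix0}.

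For the converse, assume \eqref{equation:a0x0}--\eqref{equation:aix0}--\eqref{equation:a0xi}. From \eqref{equation:a0xi} we get $a_0 x_i = a_0 b_i + a_i b_0$. Adding \eqref{equation:aix0} to this yields
\[
a_0 x_i + a_i x_0 = (a_0 b_i + a_i b_0) + (b_i + a_0 b_i + a_i b_0) = b_i,
\]
since the repeated terms cancel in characteristic $2$. This recovers (*), and hence $ax = b$.

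The computation is routine throughout. The only point needing care is the step that multiplies by $a_0$: it requires that $a_0$ is idempotent and that we substitute \eqref{equation:a0x0} in the form $a_0 x_0 = b_0$. I do not expect any genuine obstacle.
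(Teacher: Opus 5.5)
Your proposal is correct and follows essentially the same route as the paper: compare components via Lemma~\ref{lem:basis}(\ref{item:xy}), multiply $a_0x_i + a_ix_0 = b_i$ by the idempotent $a_0$ and substitute $a_0x_0 = b_0$ to obtain \eqref{equation:a0xi}, then back-substitute to obtain \eqref{equation:aix0}. The converse is the same cancellation in characteristic $2$ that the paper uses.
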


\begin{proof}
First, suppose that $x$ satisfies Equations \eqref{equation:a0x0}, \eqref{equation:aix0} and \eqref{equation:a0xi}, and let $c = ax$. By equation \eqref{equation:a0x0} we have $c_0 = a_0 x_0 = b_0$ and it follows from equations \eqref{equation:aix0} and \eqref{equation:a0xi} that for all $i \ge 1$,
\[
    c_i = a_0 x_i + a_i x_0 = (a_i b_0 + a_0 b_i) + (b_i + b_i a_0 + a_i b_0) = b_i.
\]
Thus $c = b$.

Conversely, suppose that $ax = b$. We then have $b_0 = a_0 x_0$, giving equation \eqref{equation:a0x0}, and for all $i \ge 1$, $b_i = a_i x_0 + a_0 x_i$. Since $a_0^2 = a_0$ and $a_0 x_0 = b_0$, the latter implies $a_0 x_i = a_0 b_i + a_i b_0$, which yields $a_0 (x_i + b_i) = a_i b_0$ for all $i \geq 1$, which is equation \eqref{equation:a0xi}. Thus,
\[
    a_i x_0 = b_i + a_0 x_i = b_i + a_0 b_i + a_i b_0,
\]
which is equation \eqref{equation:aix0}, and we are done.
\end{proof}

\begin{proof}[Proof of Theorem \ref{theorem:ax=b}]
The three (systems of) equations in Lemma \ref{lemma:ax=b} can be solved using Proposition \ref{proposition:ax=b_S0}: we obtain
\begin{align}
    \label{equation:a0x0_solved}
    x_0 &\in [b_0, a_0 + b_0 + 1] \\
    \label{equation:aix0_solved}
    x_0 &\in [b_i + b_i a_0 + a_ib_0, b_i + b_i a_0 + a_ib_0 + a_i + 1] \text{ for all } i \ge 1\\
    \label{equation:a0xi_solved}
    x_i + b_i &\in [a_i b_0, a_i b_0 + a_0 + 1] \text{ for all } i \ge 1.
\end{align}
Noting that $l_i = b_i + a_0 b_i + a_i b_0$ and $u_i = l_i + a_i + 1$ for all $i \ge 0$, we obtain
\begin{align*}
    x_0 &\in [l_i, u_i] \ \forall i \ge 0 \\
    x_i + b_i &\in [a_i b_0, a_i b_0 + a_0 + 1] \ \forall i \ge 1.
\end{align*}
The first system of equations is equivalent to $x_0 \in [ \lambda_0, \upsilon_0 ]$. The second system is equivalent to Equation \eqref{equation:a0xi}, which in turn is equivalent to $a_0 x_i = a_0b_i + a_i b_0$; according to Proposition \ref{proposition:ax=b_S0}, this is equivalent to $x_i \in [a_0b_i + a_i b_0, a_0b_i + a_i b_0 + a_0 + 1] = [\lambda_i, \upsilon_i]$.

Finally, if $\lambda_0 \not\le \upsilon_0$, then the set of solutions is empty. Otherwise, we have $l_i \le u_i$ for all $i \in \N$ (and hence the first system has a solution), and moreover $a_i b_0 \le a_0$ (and hence the second system has a solution) since we have $l_0 \le u_0$, hence $b_0 \le b_0 + a_0 + 1$, and equivalently $b_0 \le a_0$. 
\end{proof}

We remark that deciding whether $ax = b$ has a solution can be verified by computing $\lambda_0$, $\upsilon_0$, and verifying that $\lambda_0 \upsilon_0 = \lambda_0$. This requires $\Theta(n)$ sums and products, where $n = \max\{ N(a),  N(b) \}$ satisfies $n \le \log \max \{ |a|, |b| \}$.

\paragraph{The equation $az=0$.} 
In particular, the solutions to the equation $az = 0$ can be neatly characterised. For each $x \in S$ with $x=\sum_{i=0}^k x_i C_{2^i}$ where each $x_i$ is idempotent, define
\[
    x^+ = \bigvee_{i=0}^k x_i \in S_0. 
\]
It is clear from the definition that for all $x \in S$ and all $e \in S_0$, $e^+ = e$, $x^+ x = x$, and $e x^+ = (ex)^+$.

\begin{corollary} \label{corollary:az=0}
Let $a,z \in S$. Then $az = 0$ if and only if $z_0 a^+ = a_0 z^+ = 0$, or equivalently $z_0 \le a^+ + 1$ and $a_0 \le z^+ + 1$.
\end{corollary}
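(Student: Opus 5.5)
We apply Lemma \ref{lemma:ax=b} with $b = 0$, so $b_i = 0$ for every $i$. The system \eqref{equation:a0x0}--\eqref{equation:a0xi} then reduces to
\[
    a_0 z_0 = 0, \qquad a_i z_0 = 0 \ (i \ge 1), \qquad a_0 z_i = 0 \ (i \ge 1).
\]
Hence $az = 0$ if and only if $a_i z_0 = 0$ for all $i \in \N$ and $a_0 z_i = 0$ for all $i \in \N$. The case $i = 0$ appears in both families, which is harmless.

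The second step converts these families into single conditions using the join $x^+ = \bigjoin_i x_i$ in the Boolean algebra $S_0$ (Lemma \ref{lem:lattice}). The key fact is that in a Boolean algebra, $e \meet \bigjoin_i f_i = \bigjoin_i (e \meet f_i)$ for finite joins, by distributivity. A finite join is zero exactly when every term is zero. Therefore $z_0 a^+ = \bigjoin_i z_0 a_i$ vanishes if and only if $z_0 a_i = 0$ for all $i$. Likewise $a_0 z^+ = 0$ if and only if $a_0 z_i = 0$ for all $i$. This gives the first formulation. Alternatively, the remark $e x^+ = (ex)^+$ before the statement gives it directly, since $(ex)^+ = 0$ if and only if $ex = 0$.

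For the equivalent formulation, we use that in a Boolean algebra $ef = 0$ is equivalent to $e \le \neg f = f + 1$. Indeed, $e(f+1) = ef + e$, which equals $e$ if and only if $ef = 0$. Applying this to $z_0 a^+ = 0$ gives $z_0 \le a^+ + 1$, and applying it to $a_0 z^+ = 0$ gives $a_0 \le z^+ + 1$.

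There is no real obstacle here. The only point needing care is that the join over the decomposition components is finite, so distributivity applies. One should also note that $x^+$ is well defined even for $x = 0$, where it equals $0$.
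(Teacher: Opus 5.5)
Your proposal is correct and follows the paper's proof: you specialise Lemma~\ref{lemma:ax=b} to $b=0$, collapse each family of equations into a single condition $z_0 a^+ = 0$ or $a_0 z^+ = 0$, and rewrite $ef = 0$ as $e \le f + 1$. You also supply the finite-distributivity justification for the collapsing step, which the paper leaves implicit.
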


\begin{proof}
Using Lemma \ref{lemma:ax=b}, we have $az = 0$ if and only if the following system of equations holds:
\begin{align}
    \label{eq:a0z0}
    a_0 z_0 &= 0 \\
    \label{eq:aiz0}
    a_i z_0 &= 0 \text{ for all } i \ge 1\\
    \label{eq:a0zi}
    a_0 z_i &= 0 \text{ for all } i \ge 1.
\end{align}
Equations \eqref{eq:a0z0} and \eqref{eq:aiz0} are equivalent to the single equation $a^+ z_0 = 0$,
or in other words $z_0 \le a^+ + 1$. Similarly, Equations \eqref{eq:a0z0} and \eqref{eq:a0zi} are equivalent to $a_0 \le z^+ + 1$.
\end{proof}

\subsection{Extended Green's relations} \label{subsection:Green}

We next examine the multiplicative structure of $S$. Since $(S, \cdot)$ is a commutative semigroup, we have that all of Green's relations \cite{How95} coincide, that is, $\mathcal{R}=\mathcal{L}=\mathcal{H}=\mathcal{D} = \mathcal{J}$. We shall refer to this relation as $\mathcal{R}$ in all that follows, recalling that for $a,b \in S$ we have $a \mathcal{R} b$ if and only if the principal (right) ideal generated by $a$ is equal to the principal (right) ideal generated by $b$. The next proposition gives a simple characterisation of this relation, and also the extended Green relations $\mathcal{R}^*$ and $\tilde{\mathcal{R}}$. 
Recall that  for a monoid $M$, and elements $a,b \in M$ we have $a \mathcal{R}^*b$ if for all $x, y \in M$ $xa = ya$ if and only if $xb = yb$, and $a \mathcal{ \tilde{R} } b$ if for all idempotents $e$, $ea = a$ if and only if $eb = b$.

\begin{proposition}
\label{prop:green}
Let $a,b \in S$. Then  in the multiplicative semigroup of $S$ we have:
\begin{enumerate}
\item \label{item:R} $a \mathcal{R} b$ if and only if $b=ag$ for some unit $g$;

\item \label{item:Rstar} $a \mathcal{R}^* b$ if and only if $a^+ = b^+$ and $a_0=b_0$;

\item \label{item:Rtilde} $a \tilde{\mathcal{R}} b$ if and only if $a^+ = b^+$.
\end{enumerate}
\end{proposition}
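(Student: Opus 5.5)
For part \ref{item:R} the plan is to use the decomposition $x = \sum_i x_i C_{2^i}$, the product formula of Lemma \ref{lem:basis} (\ref{item:xy}), and the characterisation of units in Lemma \ref{lem:basis} (\ref{item:unit}). The ``if'' direction is immediate: if $b = ag$ with $g$ a unit, then $a = bg^{-1}$, so $aS = bS$. For the converse, since $S$ is a monoid with identity $C_1$, from $a \mathcal{R} b$ we get $b = ax$ and $a = by$ for some $x,y \in S$.

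Comparing the $0$-components gives $b_0 = a_0x_0 \le a_0$ and $a_0 = b_0y_0 \le b_0$. Hence $a_0 = b_0 = a_0x_0$, and in particular $b_0(x_0+1) = 0$. I will look for a unit $g$ with $g_0 = C_1$ and $ag = b$. By Lemma \ref{lem:basis} (\ref{item:xy}), the condition $ag = b$ becomes $a_0 g_i + a_i = a_0x_i + a_ix_0$ for all $i \ge 1$, that is,
\[
  a_0 g_i = a_0 x_i + a_i(x_0+1).
\]
This is the main obstacle, since $a_i(x_0+1)$ need not vanish. The key computation is to expand $a_i = b_0y_i + b_iy_0$ and $b_i = a_0x_i + a_ix_0$. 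Using $b_0(x_0+1) = 0$ and $x_0(x_0+1) = 0$, this gives
\[
  a_i(x_0+1) = a_0x_iy_0(x_0+1) \le a_0 .
\]
So I will take $g_i = x_i + x_iy_0(x_0+1)$ for $i \ge 1$. Then $a_0g_i = a_0x_i + a_i(x_0+1)$ holds, so $ag = b$, and $g$ is a unit by Lemma \ref{lem:basis} (\ref{item:unit}).

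For part \ref{item:Rtilde}, the idempotents are exactly $S_0$ by Lemma \ref{lem:basis} (\ref{item:idempotent}). For $e \in S_0$ we have $ea = a$ iff $ea_i = a_i$ for all $i$, which holds iff $e \ge a_i$ for all $i$ in the Boolean algebra $S_0$, i.e. iff $e \ge a^+$. Thus the set of idempotents fixing $a$ is the up-set of $a^+$. Two such up-sets coincide iff $a^+ = b^+$.

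For part \ref{item:Rstar}, since $S$ has characteristic $2$, we have $xa = ya$ iff $(x+y)a = 0$. Hence $a \mathcal{R}^* b$ iff $\mathrm{Ann}(a) = \mathrm{Ann}(b)$. By Corollary \ref{corollary:az=0}, $z \in \mathrm{Ann}(a)$ iff $z_0a^+ = 0$ and $a_0z^+ = 0$, so equal $a^+$ and equal $a_0$ give equal annihilators. For the converse I will probe with two families of test elements.
\begin{itemize}
  \item Take $z = e \in S_0$. Here $z_0 = z^+ = e$, and $a_0 e \le a^+ e$, so the two conditions reduce to $e \in \mathrm{Ann}(a)$ iff $e \le a^+ + 1$. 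Equal annihilators therefore force $a^+ + 1 = b^+ + 1$, i.e. $a^+ = b^+$.
  \item Take $z = eC_2$ with $e \in S_0$. Here $z_0 = 0$ and $z^+ = e$, so $z \in \mathrm{Ann}(a)$ iff $a_0e = 0$, i.e. iff $e \le a_0 + 1$. This forces $a_0 = b_0$.
\end{itemize}
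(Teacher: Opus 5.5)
Your proof is correct and follows essentially the paper's route: comparing components to get $a_0=b_0=a_0x_0$ and a unit $g$ for $\mathcal{R}$, annihilators together with Corollary~\ref{corollary:az=0} for $\mathcal{R}^*$, and up-sets of $a^+$ in $S_0$ for $\tilde{\mathcal{R}}$. Your second test family $z=eC_2$ shows that $a_0=b_0$ is necessary, which the paper's proof of part 2 never argues, and your remark that $a_i(x_0+1)$ ``need not vanish'' is mistaken but harmless: since you already have $a_0(x_0+1)=b_0(x_0+1)=0$, your own formula gives $a_i(x_0+1)=a_0x_iy_0(x_0+1)=0$, so the correction term is zero and $g=C_1+x_+$ (the paper's choice) already works.
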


\begin{proof}
\begin{enumerate}
    \item \label{item:proof_R}
    Let $G$ denote the group of units of $S$. By part (4) of Lemma \ref{lem:basis}, $G = \{g \in S: g_0=C_1\}$. Clearly if $a=bg$ for some $a,b \in S$ and $g \in G$ we have that $a \mathcal{R}b$. Suppose then that $a \mathcal{R}b$, that is $ax=b$ and $by=a$ for some $x,y \in S$. 
 
We then have $a_0 x_0 = b_0$ and  $b_0 y_0 = a_0$, giving $b_0 \leq a_0$ and $a_0 \leq b_0$ and hence $a_0=b_0$. In particular, $a_0x_0=a_0y_0=a_0$. Also, since $a=by=axy$, in the notation of subsection \ref{subsection:S_basic_properties} we have
\[
    a_+ = (axy)_+ = a_0 x_0 y_+ + a_0 x_+ y_0 + a_+ x_0 y_0 = a_0 y_+ + a_0 x_+ + a_+ x_0 y_0.
\]
In particular,
\[
    a_+ x_0 = a_0 x_0 y_+ + a_0 x_0 x_+ + a_+ x_0 y_0 = a_0 y_+ + a_0 x_+ + a_+ x_0 y_0 = a_+,
\]
and hence $ax_0 = a$.
Now consider $g = C_1 + x_+ \in G$. We have
\[
    ag = a(x_0 + C_1 + x) = ax = b,
\]
as required.

    \item \label{item:proof_Rstar}
Since $S$ is a ring we have $a \mathcal{R}^* b$ in the multiplicative semigroup of $S$ if and only if, for all $z \in S$, $az = 0$ if and only if $bz = 0$. If $a_0 = b_0$ and $a^+ = b^+$, then Corollary \ref{corollary:az=0} immediately shows that $a \mathcal{R}^* b$. Conversely, if $a^+ \ne b^+$, then there exists $z_0 \in (a^+ + 1)^\downarrow \Delta (b^+ + 1)^\downarrow$, say $z_0 \le a^+ + 1$ but $z_0 \not\le b^+ + 1$. We have (by assumption) $(z_0)^+ = z_0 \le a^+ + 1$ and hence also (by Lemma \ref{lem:basis}) $(z_0)^+ + 1  \geq  a^+ \geq a_0$. It now follows from Corollary \ref{corollary:az=0} that $a z_0 = 0$, whilst $b z_0 \ne 0$.

    \item \label{item:proof_Rtilde}
It is easy to check that the following are equivalent for any idempotent $e$: $e$ fixes $a$; $e a_i = a_i$ for all $i \in \N$; $e \ge a_i$ for all $i$; $e \ge \bigjoin_i a_i = a^+$. Thus, $a \tilde{\mathcal{R}} b$ if and only if $(a^+)^\uparrow = (b^+)^\uparrow$ in the Boolean algebra $S_0$, or equivalently $a^+ = b^+$.
\end{enumerate}

\end{proof}

We next characterise the $\{ \mathcal{R}, \mathcal{R}^*, \tilde{\mathcal{R}}\}$-class of idempotent elements. 

\begin{corollary} \label{corollary:Extended_Green_e}
Let $G$ be the group of units of $S$, and let $e \in S_0$ be idempotent.
\begin{enumerate}
    \item \label{item:R_e}
    The $\mathcal{R}$-class of $e$ is the group $eG$.

    \item \label{item:R_star_e}
    The $\mathcal{R}^*$-class of $e$ is also the group $eG$.

    \item \label{item:R_tilde_e}
    The $\tilde{\mathcal{R}}$-class of $e$ is the set $\{ a \in S : a^+ = e\} \subseteq eS$.

\end{enumerate}
\end{corollary}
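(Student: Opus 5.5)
Each of the three parts follows by specialising Proposition \ref{prop:green} to $b = e$, where $e \in S_0$ is idempotent. Such an $e$ satisfies $e_0 = e$, $e_i = 0$ for $i \ge 1$, and $e^+ = e$.

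For part \ref{item:R_e}, Proposition \ref{prop:green}(\ref{item:R}) says the $\mathcal{R}$-class of $e$ is exactly $\{eg : g \in G\} = eG$. It remains to see that $eG$ is a group.
\begin{itemize}
\item It is closed under multiplication, since $(eg)(eh) = e^2gh = e(gh)$.
\item It contains $e = eC_1$, which is an identity on it, because $e(eg) = eg$.
\item Each element has an inverse in it: by Lemma \ref{lem:basis}(\ref{item:unit}) every unit satisfies $g^2 = C_1$, so $(eg)^2 = e$ and $eg$ is its own inverse.
\end{itemize}
So $eG$ is a group with identity $e$; in fact it is an elementary abelian $2$-group.

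For part \ref{item:R_star_e}, Proposition \ref{prop:green}(\ref{item:Rstar}) gives $a \mathcal{R}^* e$ if and only if $a_0 = e$ and $a^+ = e$. I will show this set equals $eG$.
\begin{itemize}
\item If $a = eg$ with $g \in G$, then by Lemma \ref{lem:basis}(\ref{item:xy}) we get $a_0 = eg_0 = eC_1 = e$ and $a_i = eg_i \le e$ for $i \ge 1$. Hence $a^+ = e$.
\item Conversely, suppose $a_0 = a^+ = e$. Then $a_i \le e$ for all $i$, so $ea = a$. Set $g = C_1 + a_+$, which is a unit by Lemma \ref{lem:basis}(\ref{item:unit}). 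Then $eg = e + ea_+ = a_0 + a_+ = a$.
\end{itemize}
Thus the $\mathcal{R}^*$-class coincides with the $\mathcal{R}$-class $eG$. This agrees with the general fact that $\mathcal{R} \subseteq \mathcal{R}^*$.

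For part \ref{item:R_tilde_e}, Proposition \ref{prop:green}(\ref{item:Rtilde}) with $e^+ = e$ gives the class $\{a : a^+ = e\}$ directly. For the inclusion in $eS$, the remark before Corollary \ref{corollary:az=0} gives $a^+a = a$. So if $a^+ = e$ then $a = ea \in eS$.

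None of these steps is a real obstacle. The only point needing care is the converse inclusion in part \ref{item:R_star_e}: one must choose the explicit unit $g = C_1 + a_+$ and check that $ea_+ = a_+$, which holds because $a_i \le a^+ = e$ for every $i$.
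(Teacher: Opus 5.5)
Your proposal is correct and follows essentially the same route as the paper: you specialise Proposition~\ref{prop:green} to $b = e$, use the unit $g = C_1 + a_+$ for the converse in part~\ref{item:R_star_e} (this is the paper's $g$ with $g_0 = C_1$ and $g_i = a_i$), and use $a^+ a = a$ for part~\ref{item:R_tilde_e}. You also check explicitly that $eG$ is a group and that $eG$ lies inside the $\mathcal{R}^*$-class of $e$, both of which the paper leaves implicit.
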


\begin{proof}
\begin{enumerate}
    \item \label{item:proof_R_e}
    This follows directly from Proposition \ref{prop:green}.

    \item \label{item:proof_R_star_e}
    Suppose $a \mathcal{R}^* e$. By Proposition \ref{prop:green}, we have $a^+ = e$ and $a_0 = e$, or equivalently $a_0 = e$ and $a_i e = a_i$ for all $i \ge 1$. We obtain $a = eg$ with $g_0 = C_1$ and $g_i = a_i$ for all $i \ge 1$.

    \item \label{item:proof_R_tilde_e}
    We have $a \tilde{R} e$ if and only if $a^+ = e$. Since $a^+ a = a$, this implies $a = ea$.
We note that for any idempotent $e \in S$, the ideal $eS$ is the set of elements fixed by $e$.
\end{enumerate}

\end{proof}

Recall that a commutative semigroup $T$ is said to be a \Define{restriction semigroup} if (1) for any
$a \in T$, $a$  is $\tilde{\mathcal{R}}$-related to a (unique) idempotent $a^+$ and (2) for any element $a$ and any idempotent $e$ of $T$ we have $ae = (ae)^+ a$. For more on restriction semigroups, the reader is directed to the papers \cite{GH10, Kud25} and the notes by Gould in \cite{Gou07}.

\begin{proposition} \label{proposition:restriction_semigroup}
The multiplicative semigroup of $S$ is a restriction semigroup with respect to $a^+$, with $\mathcal{R} \subsetneq {\mathcal{R}}^* \subsetneq \tilde{\mathcal{R}}$.
\end{proposition}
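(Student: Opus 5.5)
The plan is to verify the two restriction-semigroup axioms directly and then exhibit separating examples for the strict inclusions.

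For axiom (1), take $a \in S$. By Lemma \ref{lem:basis}, $a^+ = \bigjoin_i a_i$ is a sum of odd cycles, hence idempotent. Proposition \ref{prop:green}(\ref{item:Rtilde}) says $a \tilde{\mathcal{R}} b$ iff $a^+ = b^+$. Since $(a^+)^+ = a^+$ (as noted before Corollary \ref{corollary:az=0}), we get $a \tilde{\mathcal{R}} a^+$. Uniqueness follows because two idempotents $e,f$ with $e \tilde{\mathcal{R}} f$ satisfy $e = e^+ = f^+ = f$.

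For axiom (2), take $a \in S$ and $e \in S_0$. The identity $e x^+ = (ex)^+$ gives $(ae)^+ a = e a^+ a$, and $a^+ a = a$ gives $e a^+ a = ea$. So $(ae)^+ a = ea$ as required. If one prefers to check the two stated facts rather than cite them, both come from Lemma \ref{lem:basis}(\ref{item:xy}). Since $e_i = 0$ for $i \ge 1$, we have $(ea)_i = e a_i$. In the Boolean algebra $S_0$, meets distribute over joins, so $\bigjoin_i e a_i = e \bigjoin_i a_i$. Also $a^+ \ge a_i$ for each $i$, so $a^+ a_i = a_i$.

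For the inclusions, $\mathcal{R} \subseteq \mathcal{R}^*$ holds in any semigroup: left-multiplication by $x$ or $y$ respects $b = ag$ with $g$ a unit. This can also be seen from Proposition \ref{prop:green}, since $ag$ has $(ag)_0 = a_0$ and $(ag)^+ = a^+$ when $g_0 = C_1$. The inclusion $\mathcal{R}^* \subseteq \tilde{\mathcal{R}}$ is immediate from items (\ref{item:Rstar}) and (\ref{item:Rtilde}) of Proposition \ref{prop:green}.

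Strictness needs explicit examples.
\begin{itemize}
\item $\mathcal{R}^* \ne \tilde{\mathcal{R}}$: take $a = C_1$ and $b = C_2$. Then $a^+ = b^+ = C_1$, but $a_0 = C_1 \neq 0 = b_0$.
\item $\mathcal{R} \ne \mathcal{R}^*$: we need $a_0 = b_0$ and $a^+ = b^+$ without $b \in aG$. Take $a = C_2 + C_3 C_4$, which has $a_0 = 0$, $a_1 = C_1$, $a_2 = C_3$, so $a^+ = C_1$. Take $b = C_2$, so $b_0 = 0$ and $b^+ = C_1$. Hence $a \mathcal{R}^* b$. But $ag = a$ for every unit $g$, since $g = C_1 + g_+$ and $a g_+ = 0$ because $a_0 = 0$. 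So $aG = \{a\} \not\ni b$.
\end{itemize}

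The only step needing care is this last counterexample, namely confirming that $aG$ is trivial when $a_0 = 0$. That follows directly from Lemma \ref{lem:basis}(\ref{item:xy}): $a_0 = 0$ and $g_0 = C_1$ give $(ag)_i = a_i$ for all $i$.
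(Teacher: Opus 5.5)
Your proof is correct and follows the paper's argument. You use the same identities $(ae)^+ = ea^+$ and $a^+a = a$ for the restriction axioms, and the same pair $C_1, C_2$ to separate $\mathcal{R}^*$ from $\tilde{\mathcal{R}}$. Your witness for $\mathcal{R} \neq \mathcal{R}^*$ is different and simpler: taking $a_0 = 0$ forces $aG = \{a\}$, whereas the paper's pair $C_3 + C_5C_2$, $C_3 + C_5C_4$ needs the contradiction $C_5 \le C_3$. You also justify the non-strict inclusions, which the paper leaves implicit.
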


\begin{proof}
It is clear that $S$ satisfies the first condition where for $x \in S$ the unique idempotent related to $x$ is $x^+$. For the second condition, we have $(ae)^+ = e a^+$ (as previously shown) and $(ae)^+ a = e a^+ a = e a$. 

We now give examples to show the two strict inclusions. Firstly, if $a = C_1$ and $b = C_2$, we have $a^+ = b^+ = C_1$ but $a_0 \ne b_0$, and hence $a \tilde{\mathcal{R}} b$ but not $a \mathcal{R}^* b$. Secondly, let $a = C_3 + C_5 C_2$ and $b = C_3 + C_5 C_4$. Then clearly $a \mathcal{R}^* b$; we now prove that $a$ and $b$ are not $\mathcal{R}$-related. Suppose, for the sake of contradiction, that $a = bg$ for some unit $g$. We have 
\[
    C_5 = a_1 = b_1 g_0 + b_0 g_1 = C_3 g_1,
\]
and hence $C_5 \le C_3$, thus $C_3C_5 = C_5$. However, $C_3C_5 = C_{15}$, which is the desired contradiction.
\end{proof}

\subsection{Special elements of $S$} \label{subsection:special_elements}

We have already characterised the idempotents of $S$. In this subsection, we characterise the group of units, the regular elements (and all the subgroups of $S$), and the so-called co-regular elements of $S$.

An element $a \in S$ is (multiplicatively) \Define{cancellable} if $ax = ay$ implies $x = y$.

\begin{theorem}[Classification of cancellable elements of $S$] \label{theorem:cancellable_elements}
The following are equivalent for $g \in S$:
    \begin{enumerate}
        \item \label{item:finite_number_gx=0} there are finitely many solutions $x \in S$ to the equation $gx = 0$;

        \item \label{item:unique_gx=0} $x = 0$ is the unique solution of $gx = 0$;

        \item \label{item:g_cancellable} $g$ is cancellable;

        \item \label{item:g_unit} $g$ is a unit;

        \item \label{item:g0=1} $g_0 = 1$;

        \item \label{item:g=a2+a+1} $g = a^2 + a + 1$ for some $a \in S$;
        \item \label{item:order2}$g^2=1$.
    \end{enumerate}
\end{theorem}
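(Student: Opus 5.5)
I will prove the equivalences by a cycle of implications, keeping track of which facts from Lemma \ref{lem:basis} and Corollary \ref{corollary:az=0} each step uses. The cheap links are: (\ref{item:unique_gx=0}) $\Rightarrow$ (\ref{item:finite_number_gx=0}), which is trivial, and (\ref{item:g_cancellable}) $\Leftrightarrow$ (\ref{item:unique_gx=0}), which holds because $S$ is a ring, so $gx = gy$ iff $g(x+y)=0$. For (\ref{item:g_unit}) $\Leftrightarrow$ (\ref{item:g0=1}) $\Leftrightarrow$ (\ref{item:order2}), I use Lemma \ref{lem:basis}(\ref{item:unit}) together with (\ref{item:x4=x2}): $g^2 = g_0$, so $g^2 = 1$ iff $g_0 = C_1$. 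Finally, a unit is cancellable (multiply by the inverse), which gives (\ref{item:g_unit}) $\Rightarrow$ (\ref{item:g_cancellable}).

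The substantive step is (\ref{item:finite_number_gx=0}) $\Rightarrow$ (\ref{item:g0=1}), which I prove by contraposition. Suppose $g_0 \neq 1$, and set $e = g_0 + C_1 \neq 0$, an idempotent with $g_0 e = 0$. For each $m \ge 1$ let $z = e\,C_{2^m}$, so $z_0 = 0$ and $z^+ = e$. By Corollary \ref{corollary:az=0}, $gz = 0$ iff $z_0 g^+ = 0$ and $g_0 z^+ = 0$, and both hold. The elements $eC_{2^m}$ are pairwise distinct for distinct $m$, since they live in different $2$-adic layers and $e \neq 0$. Alternatively, I can directly compute $g\,eC_{2^m} = g_0 e C_{2^m} + \sum_{i\ge1} g_i e C_{2^i}C_{2^m} = 0$, using that products of two even cycles vanish. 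Either way there are infinitely many solutions, closing the loop (\ref{item:finite_number_gx=0}) $\Rightarrow$ (\ref{item:g0=1}) $\Rightarrow$ (\ref{item:g_unit}) $\Rightarrow$ (\ref{item:g_cancellable}) $\Rightarrow$ (\ref{item:unique_gx=0}) $\Rightarrow$ (\ref{item:finite_number_gx=0}).

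It remains to attach (\ref{item:g=a2+a+1}). If $g = a^2 + a + 1$, then $g_0 = (a^2)_0 + a_0 + 1 = a_0 + a_0 + 1 = 1$, since $a^2 = a_0$ lies in $S_0$; hence (\ref{item:g0=1}). Conversely, if $g_0 = 1$, take $a = g_+ = g + 1$. Then $a^2 = 0$ because $a \in S_+$, so $a^2 + a + 1 = g_+ + 1 = g$. I expect no real obstacle here. The only point needing care is the infinite family of annihilating elements, and that is handled by the $2$-adic layer argument above.
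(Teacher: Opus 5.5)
Your proof is correct and is essentially the paper's argument, with the same key family $(g_0+C_1)C_{2^m}$, $m\ge 1$, of pairwise distinct annihilators for (1)$\Rightarrow$(5); the only structural difference is that you close the cycle through (5)$\Rightarrow$(4)$\Rightarrow$(3) directly, instead of citing the $\mathcal{R}^*$ characterisation of Proposition~\ref{prop:green}. Your witness $a = g + C_1 = g_+$ for (5)$\Rightarrow$(6) is the correct one, whereas the paper's line ``$g = 0^2+0+1$'' only handles $g = C_1$.
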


\begin{proof}
Equivalence of items \ref{item:unique_gx=0} and \ref{item:g_cancellable} follows from the fact that $S$ is a ring (since $ax=ay$ if and only if $a(x-y) = 0$). For the equivalence of items  \ref{item:g_cancellable} and \ref{item:g_unit}, note that $g$ is cancellable if and only if $g \mathcal{R}^* 1$, which by Proposition \ref{prop:green} is the case if and only if $g$ is in the group of units. 
We have already proved the equivalence of items \ref{item:g_unit}, \ref{item:g0=1} and \ref{item:order2} in Lemma \ref{lem:basis}. To see the equivalence of $\ref{item:g=a2+a+1}$ and $\ref{item:g0=1}$, first note that if $g = a^2 + a + 1$, then $g_0 = g^2 = a^4 + a^2 + 1 = 1$. Conversely, if $g_0 = 1$, then $g = 0^2 + 0 + 1$. The implication  \ref{item:unique_gx=0} implies \ref{item:finite_number_gx=0} is trivial.

To complete the proof we show that    \ref{item:finite_number_gx=0} implies \ref{item:g0=1}. Suppose then that there are finitely many solutions to the equation $gx=0$ and suppose for contradiction that $g = \sum_i=1^t g_i C_{2^i}$ where $g_0 \ne 1$. For each $x^{(k)} = (g_0 + 1) C_{ 2^k }\in S_+$ where $k \geq 1$, recalling that $s_+t_+ = 0$ for all $s,t \in S$, we have 
$$g x^{(k)} = (g_0 + g_+)(g_0+1)C_{2^k} = g_0(g_0+1)C_{2^k}  + 0 =    (g_0+g_0)C_{2^k}=0,$$
contradicting that there are finitely many solutions of $ax = 0$.

\end{proof}

The group of units, given by $G = C_1 + S_+$, is isomorphic to $( \F_2^\N, + )$: for all $g, g' \in G$, we have $(gg')_0  = C_1$ and for all $i \ge 1$,
\[
    (gg')_i = g_0 g'_i + g_i g'_0 = g_i + g'_i = (g + g')_i.
\]
Therefore, $(gg') + C_1 = (g + C_1) + (g' + C_1)$. In particular, any unit is an involution: $g^2 = 1$. In fact, every subgroup (or equivalently, regular $\mathcal{H}$-class) is the semigroup $eG$, where $e \in S_0$ is idempotent. Then $eG = e + eS_+$ again satisfies $(eg eg') + e = (eg + e) + (eg' + e)$; thus $eG$ is isomorphic to $( \F_2^\N, + )$, unless $e = 0$, in which case it is the trivial group.

\begin{theorem}[Classification of regular elements]     \label{theorem:classification_regular_elements}
The following are equivalent for $a \in S$:
\begin{enumerate}
    \item \label{item:regular_aRa2} $a \mathcal{R} a^2$;

    \item \label{item:regular_a2g} $a = a^2 g$ for some unit $g \in G$;

    \item \label{item:regular_a3=a} $a^3 = a$;

    \item \label{item:regular_a=s3} $a = s^3$ for some $s \in S$;

    \item \label{item:regular_a0=sigma} $a^+ = a^2$;

    \item \label{item:regular} $a$ is regular;

    \item \label{item:regular_a_subgroup} $a$ belongs to a subgroup;

    \item \label{item:regular_aRe} $a \mathcal{R} e$ for some idempotent $e \in E(S)$;

    \item \label{item:regular_aRstara2} $a \mathcal{R}^* a^2$;
    
    \item \label{item:regular_aRstare} $a \mathcal{R}^* e$ for some idempotent $e \in E(S)$.
\end{enumerate}
\end{theorem}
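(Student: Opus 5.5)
The plan is to reduce everything to the decomposition $a = \sum_i a_i C_{2^i}$ with $a_i \in S_0$ from Lemma \ref{lem:basis}. We know that $a^2 = a_0$ and $a^+ = \bigjoin_i a_i$. Part \ref{item:xy} of Lemma \ref{lem:basis} gives $a^3 = a_0 a = a_0 + \sum_{i\ge1} a_0 a_i C_{2^i}$. Hence $a^3 = a$ if and only if $a_0 a_i = a_i$ for all $i$, i.e. $a_i \le a_0$ for all $i$, i.e. $a^+ = a_0 = a^2$. This gives \ref{item:regular_a3=a} $\Leftrightarrow$ \ref{item:regular_a0=sigma}, and condition \ref{item:regular_a3=a} will serve as the hub.

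Next I would attach the remaining algebraic conditions to the hub.

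For \ref{item:regular_a=s3}: if $a = s^3$, then $s^4 = s^2$ gives $s^9 = s^4 s^4 s = s^2 s^2 s = s^4 s = s^3$, so $a^3 = a$. The converse is immediate with $s = a$.

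For \ref{item:regular_a2g}: assuming $a^3 = a$, set $g = C_1 + a_+ \in G$. Then $a_0 g = a_0 + a_0 a_+ = a_0 + a_+ = a$, because $a_i \le a_0$. Since $a^2 = a_0$, this says $a = a^2 g$. Conversely, any relation $a = a^2 x$, for $x$ a unit or not, yields $a_0 a = a_0 a_0 x = a_0 x = a$, i.e. $a^3 = a$.

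This last observation also settles regularity. If $a = axa$ then $a = a^2 x$, so $a^3 = a$; conversely $a^3 = a$ exhibits $x = a$ as an inverse. This gives \ref{item:regular} $\Leftrightarrow$ \ref{item:regular_a3=a}.

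Then I would handle the Green-relation conditions through Proposition \ref{prop:green}. By item \ref{item:R} there, $a \mathcal{R} a^2$ holds if and only if $a = a^2 g$ for some unit $g$, giving \ref{item:regular_aRa2} $\Leftrightarrow$ \ref{item:regular_a2g}. Likewise $a \mathcal{R} e$ holds if and only if $a = eg$ with $g \in G$.

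For the subgroup conditions, recall that the subgroups are exactly the sets $eG$. Any element $eg$ of such a set satisfies $(eg)^3 = e g^3 = eg$, using $g^2 = 1$. Conversely, we have already shown that $a^3 = a$ gives $a = a_0 g \in a_0 G$. So \ref{item:regular_a_subgroup}, \ref{item:regular_aRe} and \ref{item:regular_a3=a} are all equivalent.

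For the $\mathcal{R}^*$ conditions, I would use $\mathcal{R} \subseteq \mathcal{R}^*$ to get \ref{item:regular_aRa2} $\Rightarrow$ \ref{item:regular_aRstara2} and \ref{item:regular_aRe} $\Rightarrow$ \ref{item:regular_aRstare}. For the reverse directions I would use item \ref{item:Rstar} of Proposition \ref{prop:green}. If $a \mathcal{R}^* a^2$, then $a^+ = (a^2)^+ = a_0$, which is \ref{item:regular_a0=sigma}. If $a \mathcal{R}^* e$ with $e$ idempotent, then $a^+ = e^+ = e$ and $a_0 = e_0 = e$, so again $a^+ = a_0 = a^2$.

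I do not expect a genuine obstacle here. The only points needing care are the identity $a_0 + a_0 a_+ = a_0 + a_+$, which relies on $a_i \le a_0$, and the fact that the units are involutions. Both are already recorded before the statement.
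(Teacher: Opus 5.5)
Your proof is correct and essentially follows the paper's argument. It uses the same decomposition $a=\sum_i a_iC_{2^i}$, the identities $a^2=a_0$, $s^4=s^2$ and $g^2=1$, and Proposition~\ref{prop:green}; the differences are organisational: you route everything through $a^3=a$, checking regularity, subgroup membership and $a\,\mathcal{R}\,e$ directly (with the explicit unit $g=C_1+a_+$) where the paper cites general commutative-semigroup facts, and you send the $\mathcal{R}^*$ conditions to $a^+=a^2$ via Proposition~\ref{prop:green}(\ref{item:Rstar}) rather than to $eG$ via Corollary~\ref{corollary:Extended_Green_e}.
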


\begin{proof}
Firstly, the equivalence between items \ref{item:regular}, \ref{item:regular_a_subgroup}, \ref{item:regular_aRe} is guaranteed by the fact that $S$ is commutative. Secondly, the equivalence between items \ref{item:regular_aRa2} and \ref{item:regular_a2g} is given in Proposition \ref{prop:green}. Thirdly, the following implications are immediate from the definitions: item \ref{item:regular_a2g} implies item \ref{item:regular_aRe} (recalling that $a^2$ is idempotent), item \ref{item:regular_a3=a}  implies item \ref{item:regular_a=s3} which in turn implies item \ref{item:regular},  item \ref{item:regular_aRa2} implies item \ref{item:regular_aRstara2}, which in turn implies \ref{item:regular_aRstare}. To complete the proof we show: \ref{item:regular_a0=sigma} if and only if \ref{item:regular_a3=a}, \ref{item:regular_aRe} implies  \ref{item:regular_a2g};  \ref{item:regular_a2g} implies \ref{item:regular_a3=a}; \ref{item:regular_a=s3} implies \ref{item:regular_a3=a}; and  \ref{item:regular_aRstare} implies \ref{item:regular_a_subgroup}.

To see that \ref{item:regular_a0=sigma} implies \ref{item:regular_a3=a}: if $a^+ = a^2$, then $a^3 = a a^+ = a$.

To see that \ref{item:regular_a3=a} implies \ref{item:regular_a0=sigma}: if $a^3 = a$, then let $b = a^3$; we have $a_i = b_i = a_ia_0$ for all $i \in \N$, or equivalently $a_0 \ge a^+$ (and by definition, $a_0 \le a^+$).

To see that \ref{item:regular_aRe} implies  \ref{item:regular_a2g}: by Proposition \ref{prop:green}, if $a \mathcal{R} e$ then   $a = eg$ for some unit $g \in G$, then we have $a^2 = a_0 = (eg)_0 = e g_0 = e$, and hence $a^2g = eg = a$.

To see that \ref{item:regular_a2g} implies \ref{item:regular_a3=a}: if $a = a^2g$ for some unit $g$, then $a^3 = a^2a^2g = a^4 g = a^2g=a$.

To see that \ref{item:regular_a=s3} implies \ref{item:regular_a3=a}:  if $a = s^3$, then using the fact that $s^4 = s^2$, $a^3 = s^9 = s^3 = a$.

To see that  \ref{item:regular_aRstare} implies \ref{item:regular_a_subgroup}: if $a \mathcal{R}^* e$, then $a$ belongs to the group $eG$ by Corollary \ref{corollary:Extended_Green_e}.
\end{proof}

Say an element $h \in S$ is \Define{co-regular} if $h + C_1$ is regular. We first give a characterisation of co-regular elements.

\begin{proposition}[Classification of co-regular elements] \label{proposition:co-regular}
Then the following are equivalent for $h \in S$:
\begin{enumerate}
    \item \label{item:co-regular} $h$ is co-regular, i.e. $h + C_1$ is regular;

    \item \label{item:co-regular_h3=h2} $h^3 = h^2$;

    \item \label{item:co-regular_h0hi} $h_0 h_i = 0$ for all $i \ge 1$;

    \item \label{item:co-regular_h(a)} $h = a^3 + a^2 + a$ for some $a \in S$;

    \item \label{item:co-regular_h=b3} $h = b^3 + 1$ for some $b \in S$.
\end{enumerate}
\end{proposition}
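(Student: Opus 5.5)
The plan is to reduce everything to the decomposition $h = \sum_i h_i C_{2^i}$ with $h_i \in S_0$, and to use the classification of regular elements, Theorem \ref{theorem:classification_regular_elements}, in the form ``$a$ is regular iff $a^3 = a$''. The basic computation, from Lemma \ref{lem:basis}(\ref{item:x4=x2}) and (\ref{item:xy}), is
\[
h^2 = h_0, \qquad h^3 = h_0 h = h_0 + \sum_{i\ge 1} h_0 h_i C_{2^i}.
\]
This immediately gives (\ref{item:co-regular_h3=h2}) $\Leftrightarrow$ (\ref{item:co-regular_h0hi}), because $h^3 = h^2$ holds exactly when $h_0 h_i = 0$ for all $i \ge 1$.

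For (\ref{item:co-regular}) $\Leftrightarrow$ (\ref{item:co-regular_h0hi}), set $a = h + C_1$. Then $a_0 = h_0 + 1$ and $a_i = h_i$ for $i \ge 1$. In characteristic $2$, $a^3 = a$ is equivalent to $a_0 a_i = a_i$ for all $i \ge 1$, that is $(h_0+1)h_i = h_i$, which is $h_0 h_i = 0$. So $h + C_1$ is regular iff (\ref{item:co-regular_h0hi}) holds. Alternatively one can expand $(h+1)^3 = h^3 + h^2 + h + 1$, so $(h+1)^3 = h+1$ iff $h^3 = h^2$; this gives (\ref{item:co-regular}) $\Leftrightarrow$ (\ref{item:co-regular_h3=h2}) directly.

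For (\ref{item:co-regular_h=b3}), I use the same expansion. If $h = b^3 + 1$, then $h + 1 = b^3$ is regular by item (\ref{item:regular_a=s3}) of Theorem \ref{theorem:classification_regular_elements}. Conversely, if $h+1$ is regular, then $h + 1 = (h+1)^3$, so $h = b^3 + 1$ with $b = h+1$.

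For (\ref{item:co-regular_h(a)}), I note that $a^3 + a^2 + a = (a+1)^3 + 1$ in characteristic $2$. Indeed, $(a+1)^3 = a^3 + a^2 + a + 1$, since the binomial coefficients $3$ reduce to $1$. Hence (\ref{item:co-regular_h(a)}) is literally (\ref{item:co-regular_h=b3}) with $b = a+1$.

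There is no real obstacle here. The only points needing care are the characteristic-$2$ binomial expansion and checking that the formula for $h^3$ via Lemma \ref{lem:basis}(\ref{item:xy}) uses $h_+^2 = 0$ correctly.
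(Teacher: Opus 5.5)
Your proof is correct and takes essentially the same route as the paper. Both rest on the identities $h^3 = h_0 h$, $(h+1)^3 = h^3+h^2+h+1$ and $a^3+a^2+a=(a+1)^3+1$, together with the characterisation of regular elements via $a^3=a$ and $a=s^3$. The only minor difference is that you use the expansion of $(h+1)^3$ to get both directions of (1)$\Leftrightarrow$(2), whereas the paper uses it only for (2)$\Rightarrow$(1) and closes the cycle with a separate direct computation of (5)$\Rightarrow$(2).
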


\begin{proof}
The equivalence of \ref{item:co-regular} and \ref{item:co-regular_h=b3} follows immediately from Theorem \ref{theorem:classification_regular_elements}.

To see that \ref{item:co-regular_h=b3} implies \ref{item:co-regular_h3=h2}: If $h = b^3 + 1$, we have $h^2 = b^6 + 1 = b^2 + 1$, and 
\[
    h^3 = (b^3 + 1)(b^2 + 1) = b^5 + b^3 + b^2 + 1 = b^3 +b^3 +b^2 + 1 =b^2 + 1= h^2.
\]

To see that \ref{item:co-regular_h3=h2} implies \ref{item:co-regular}: if $h^3 = h^2$, then $(h+1)^3 = h^3 + h^2 + h + 1 = h + 1$, hence $h + 1$ is regular.

The equivalence of \ref{item:co-regular_h(a)} and \ref{item:co-regular_h=b3} follows from the fact that $a^3 + a^2 + a = (a+1)^3 + 1$.

For the equivalence of \ref{item:co-regular_h3=h2} and \ref{item:co-regular_h0hi}, notice that the equation $h^3 = h^2$ is equivalent to $h_0h = h_0$, which in turn is equivalent to $h_0 h_i = 0$ for all $i \ge 1$.
\end{proof}

\begin{corollary}
For any $e \in S$, $e$ is idempotent if and only if $e$ is regular and co-regular.
\end{corollary}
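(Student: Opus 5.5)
Both directions follow from the two classifications already established: regular elements are characterised by $a^3 = a$ (Theorem \ref{theorem:classification_regular_elements}), and co-regular elements by $h^3 = h^2$ (Proposition \ref{proposition:co-regular}). The plan is to show that an element satisfies both identities exactly when it is idempotent.

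For the forward direction, suppose $e$ is idempotent, so $e^2 = e$. Then $e^3 = e \cdot e^2 = e^2 = e$, hence $e$ is regular by item \ref{item:regular_a3=a} of Theorem \ref{theorem:classification_regular_elements}. Similarly $e^3 = e = e^2$, hence $e$ is co-regular by item \ref{item:co-regular_h3=h2} of Proposition \ref{proposition:co-regular}. Alternatively, one can note that $e \in S_0$ by Lemma \ref{lem:basis}, so $e$ trivially satisfies $e^+ = e = e^2$ and $e_0 e_i = 0$ for $i \ge 1$ (since $e_i = 0$).

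For the converse, suppose $e$ is both regular and co-regular. Then $e^3 = e$ by Theorem \ref{theorem:classification_regular_elements}, and $e^3 = e^2$ by Proposition \ref{proposition:co-regular}. Combining these gives $e = e^3 = e^2$, so $e$ is idempotent.

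There is no genuine obstacle here; the only care needed is to invoke the correct equivalent items. One could instead argue componentwise: regularity gives $e^+ = e_0$, so $e_i \le e_0$, while co-regularity gives $e_0 e_i = 0$, so $e_i = e_i e_0 = 0$ for $i \ge 1$. This forces $e = e_0 \in S_0$, which is idempotent by Lemma \ref{lem:basis}. I would present the short algebraic argument above.
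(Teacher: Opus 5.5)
Your proof is correct and matches what the paper intends: the corollary is stated without a written proof right after Proposition \ref{proposition:co-regular}, and it follows exactly as you say by combining $a^3=a$ (regular, Theorem \ref{theorem:classification_regular_elements}) with $h^3=h^2$ (co-regular) to get $e=e^3=e^2$. Your componentwise variant via $e^+=e_0$ and $e_0e_i=0$ is also valid.
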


We show next that the co-regular elements provide a transversal of the $\mathcal{R}$-classes. 

\begin{proposition} \label{proposition:co-regular_R-class}
Every $\mathcal{R}$-class of $S$ contains a unique co-regular element. Specifically, let $H: S \rightarrow S$ be given by $H(a) = a + a^2 + a^3$. Then for all $a \in S$, $H(a)$ is the unique co-regular element in the $\mathcal{R}$-class of $a$, and hence in particular $H$ is constant on $\mathcal{R}$-classes.
\end{proposition}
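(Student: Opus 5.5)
Write $a = a_0 + a_+$ with $a_0 \in S_0$ and $a_+ \in S_+$. Since $a^2 = a_0$ and $a^3 = a\,a_0 = a_0 + a_0 a_+$, we get
\[
    H(a) = a + a^2 + a^3 = a_0 + a_+ + a_0 + a_0 + a_0 a_+ = a_0 + (a_0+1)a_+ .
\]
So $H(a)_0 = a_0$ and $H(a)_i = (a_0+1)a_i$ for $i \ge 1$. The proof then has three steps: $H(a)$ is co-regular, $H(a) \mathcal{R} a$, and $H$ is injective on co-regular elements within an $\mathcal{R}$-class.

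\emph{Co-regularity.} This is immediate from Proposition~\ref{proposition:co-regular}, item~\ref{item:co-regular_h(a)}. It can also be checked directly via item~\ref{item:co-regular_h0hi}: $H(a)_0 H(a)_i = a_0(a_0+1)a_i = 0$.

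\emph{Same $\mathcal{R}$-class.} By Proposition~\ref{prop:green}(\ref{item:R}), it suffices to find a unit $g$ with $H(a) = ag$. I would try $g = C_1 + a_+$, which is a unit by Lemma~\ref{lem:basis}(\ref{item:unit}). Using Lemma~\ref{lem:basis}(\ref{item:xy}), $(ag)_0 = a_0$ and, for $i \ge 1$, $(ag)_i = a_i + a_0 a_i = (a_0+1)a_i$. This matches $H(a)$ exactly. Equivalently, $ag = a + a a_+ = a + a_0 a_+$, since $a_+^2 = 0$.

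\emph{Uniqueness.} Suppose $h, h'$ are co-regular and $h \mathcal{R} h'$. Then $h' = hg$ for some unit $g = C_1 + g_+$. Comparing degree-$0$ parts gives $h'_0 = h_0$. For $i \ge 1$ we get $h'_i = h_i + h_0 g_i$. Co-regularity of both elements gives $h_0 h_i = 0$ and $h_0 h'_i = 0$. Multiplying the previous identity by $h_0$ yields $0 = 0 + h_0 g_i$, so $h_0 g_i = 0$ and hence $h'_i = h_i$. Therefore $h = h'$.

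Finally, if $a \mathcal{R} b$, then $H(a)$ and $H(b)$ are co-regular elements of the same class, so $H(a) = H(b)$; thus $H$ is constant on $\mathcal{R}$-classes. The only real work is choosing the unit $g = C_1 + a_+$; everything else is routine coordinate bookkeeping with Lemma~\ref{lem:basis}(\ref{item:xy}).
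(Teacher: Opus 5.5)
Your proof is correct and is essentially the paper's. Your unit $C_1 + a_+$ is exactly the paper's $g_a = 1 + a + a^2$ (since $a + a^2 = a_+$). Your uniqueness step is the coordinatewise version of the paper's use of $f^3 = f^2$: both reduce to the fact that co-regularity forces $h_0 g_+ = 0$, so the unit acts trivially. Your direct comparison of two co-regular elements is slightly tidier than the paper's write-up, which asserts $g = 1$ where only $ag = a$ follows; that weaker fact is all that is needed to get $f = a g g_a = H(a)$.
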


\begin{proof}
For each $a \in S$, we have $H(a) = a(1 + a + a^2) = a g_a$, where $g_a = 1 + a + a^2$ is a unit (by Theorem \ref{theorem:cancellable_elements}). Thus by Proposition \ref{prop:green} we have $H(a) \mathcal{R} a$. Since (by Proposition \ref{proposition:co-regular}) $H(a)$ is co-regular, this guarantees the existence of a co-regular element in each $\mathcal{R}$-class.

Now suppose $f \in S$ is co-regular and $\mathcal{R}$-related to $a$. By Proposition \ref{prop:green} and the previous paragraph we have $f = H(a) g$ for some unit $g$. Recalling that $H(a) = ag_a$ where $g_a$ is a unit and that $h^2=1$ for all units $h$ (by Theorem \ref{theorem:cancellable_elements}) we find $f^2 = H(a)^2g^2 = a^2g_a^2g^2 = a^2$  and $f^3 = f^2f = a^2H(a)g = (a^3+a^4+a^5)g = a^3g + a^2g + a^3g = a^2g$. Since $f$ is co-regular, Proposition \ref{proposition:co-regular} now gives $a^2g= a^2$. This in turn, together with the fact that $g_0=1$, implies that 
$$ag = (a_0 + a_+)g = (a^2+a_+)g = a^2g + a_+g = a^2 + a_+g = a^2 + a_+(1+g_+)  = a^2 + a_+ +0= a_0 + a_+= a,$$ from which we deduce that $g=1$ and hence $f = H(a)$. This guarantees the $H(a)$ is the unique co-regular element in the $\mathcal{R}$-class of $a$. Notice that for all $b$ with $a\mathcal{R} b$ we then have $H(b) = H(a)$.
\end{proof}

We obtain an alternative characterisation of $\mathcal{R}$.

\begin{corollary} \label{corollary:R}  
For any $a, b \in S$, $a \mathcal{R} b$ if and only if $a_0 = b_0 \ge (a+b)^+$.
\end{corollary}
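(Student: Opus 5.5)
We prove Corollary \ref{corollary:R} by reducing $\mathcal{R}$-equivalence to the existence of a unit $g$ with $b = ag$, which is what Proposition \ref{prop:green} gives us. We then decompose $g$ into components using Lemma \ref{lem:basis}. Throughout, write $g = C_1 + \sum_{i \ge 1} g_i C_{2^i}$ with $g_i \in S_0$; by Lemma \ref{lem:basis}(\ref{item:unit}) the units are exactly the elements of this form. Lemma \ref{lem:basis}(\ref{item:xy}) then gives
\[
(ag)_0 = a_0, \qquad (ag)_i = a_i + a_0 g_i \quad (i \ge 1).
\]
So $b = ag$ holds exactly when $b_0 = a_0$ and $a_i + b_i = a_0 g_i$ for all $i \ge 1$.

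\emph{Forward direction.} Suppose $a \mathcal{R} b$, so $b = ag$ for some unit $g$. The component identities give $a_0 = b_0$. For each $i \ge 1$ we also get $(a+b)_i = a_0 g_i \le a_0$ in the Boolean algebra $S_0$. Since $a_0 = b_0$, we have $(a+b)_0 = a_0 + b_0 = 0$. Recalling $x^+ = \bigjoin_i x_i$, it follows that $(a+b)^+ = \bigjoin_{i \ge 1} (a+b)_i \le a_0$.

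\emph{Converse.} Suppose $a_0 = b_0 \ge (a+b)^+$. Then $(a+b)_i \le a_0$ for every $i \ge 1$. Set $g_i = a_i + b_i$ for $i \ge 1$, and let $g = C_1 + \sum_{i\ge1} g_i C_{2^i}$. Only finitely many $g_i$ are nonzero, so $g \in S$, and it is a unit. We then compute
\[
a_0 g_i = a_0(a_i+b_i) = a_i + b_i,
\]
using $a_i + b_i \le a_0$. Together with $b_0 = a_0$, the component identities give $ag = b$, so $a \mathcal{R} b$ by Proposition \ref{prop:green}. More compactly, $g = C_1 + a + b$ works: its zeroth component is $C_1 + a_0 + b_0 = C_1$.

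An alternative route is via Proposition \ref{proposition:co-regular_R-class}: compare $H(a) = H(b)$ componentwise. That is messier, so we use the direct argument above. The only point requiring care is translating the inequality $(a+b)^+ \le a_0$ into $a_0(a_i+b_i) = a_i+b_i$ for each $i$. This is immediate from the definition of the join in $S_0$ together with $(a+b)_0 = 0$, so we do not expect a real obstacle.
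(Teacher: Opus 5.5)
Your proof is correct, but it takes a different route from the paper. The paper first uses $\mathcal{R}\subseteq\mathcal{R}^*$ (Proposition \ref{prop:green}) to get $a_0=b_0$. It then invokes the co-regular transversal of Proposition \ref{proposition:co-regular_R-class}. Since $x^2=x_0$, we have $H(a)=a+a_0+a_0a$, so under $a_0=b_0$ the condition $H(a)=H(b)$ becomes $(a_0+C_1)(a+b)=0$. Corollary \ref{corollary:az=0} turns this into $(a+b)^+\le a_0$; the other half of that corollary is vacuous because $(a+b)_0=0$.

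You instead go straight from part \ref{item:R} of Proposition \ref{prop:green} and expand $b=ag$ componentwise with part \ref{item:xy} of Lemma \ref{lem:basis}. This gives $a_i+b_i=a_0g_i$, and you read off both directions in the Boolean algebra $S_0$.

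Your argument is more elementary and has fewer dependencies; the paper's Proposition \ref{proposition:co-regular_R-class} itself rests on part \ref{item:R} of Proposition \ref{prop:green}. It also yields an explicit canonical witness: whenever $a\mathcal{R}b$, one has $b=a(C_1+a+b)$. The paper's route produces no explicit unit. What it buys instead is presenting the corollary as the componentwise content of the fact that $H$ is constant on $\mathcal{R}$-classes. It reuses the annihilator characterisation, in keeping with the section's theme of co-regular elements forming a transversal of the $\mathcal{R}$-classes.
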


\begin{proof}
If $a \mathcal{R} b$, then $a \mathcal{R}^* b$ and hence $a_0 = b_0$. Thus let us assume from now on that $a, b \in S$ and $a_0=b_0$ and aim to show that for such $a,b \in S$ we have $a \mathcal{R} b$ if and only if $a_0 \geq (a+b)^+$.

By Proposition \ref{proposition:co-regular_R-class} together with the fact that $x^2 = x_0$ for all $x \in S$ we see that $a \mathcal{R} b$ if and only if $a + a_0 + a_0a = b + b_0 + b_0b$, or equivalently (since $a_0=b_0$) $a + a_0a = b + a_0b$. This in turn is equivalent (working, as always, over $\mathbb{F}_2$) to $(a_0+1)(a+b) = 0$. By Corollary \ref{corollary:az=0} the latter is equivalent to
$a_0 + 1 \leq (a+b)^+ + 1$ and $a_0+b_0 \leq (a_0 + 1)^+$. The second of these inequalities holds trivially since $a_0=b_0$, whilst by Lemma \ref{lem:basis} we have that $a_0+1 \leq (a+b)^+ + 1$ is equivalent to $a_0 \geq (a+b)^+$.
\end{proof}

\subsection{Injective polynomials} \label{subsection:injective_polynomials}

In this subsection, we classify the injective univariate polynomials over $S$, which turn out to also be the surjective univariate polynomials over $S$. We show that solving polynomial equations of the form $P(x) = e$, where $P$ is bijective and $s \in S$ is fixed, is straightforward. First of all, we remark that, since $x^4 = x^2$ for all $x \in S$, we can assume that $P(x)$ is at most cubic; as such, we consider $P(x) = a x^3 + b x^2 + c x + d$.

\begin{theorem} \label{theorem:injective_polynomials}
Let $P(x) = a x^3 + b x^2 + c x + d$ be a polynomial in $S[x]$. The following are equivalent.
\begin{enumerate}
    \item $P$ is injective;

    \item $P$ is surjective;

    \item $(a_0, b_0, c_0) = (0,0,1)$.
\end{enumerate}
Moreover, if $P$ is bijective, then for all $s \in S$,
\[
    P(x) = s \quad \iff \quad x_0 = d_0 + s_0 \quad \text{and} \quad x_+ = d_+ + s_+ + (a + b + c)_+ (d_0 + s_0).
\]
\end{theorem}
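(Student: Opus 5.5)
The plan is to compute $P(x)$ componentwise using the decomposition $x = x_0 + x_+$ from Section \ref{subsection:S_basic_properties}, and to read off both the criterion and the explicit inverse. Since $x^2 = x_0$ and $x^3 = x x_0 = x_0 + x_0 x_+$ (Lemma \ref{lem:basis}), and since $s_+ t_+ = 0$, I will establish
\begin{align*}
    P(x)_0 &= (a_0+b_0+c_0)\,x_0 + d_0,\\
    P(x)_+ &= (a+b+c)_+\,x_0 + (a_0 x_0 + c_0)\,x_+ + d_+ .
\end{align*}
For the sufficiency of $(a_0,b_0,c_0)=(0,0,1)$ these formulas become $P(x)_0 = x_0 + d_0$ and $P(x)_+ = x_+ + (a+b+c)_+ x_0 + d_+$. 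The system $P(x)=s$ then has the unique solution $x_0 = d_0+s_0$ and $x_+ = d_+ + s_+ + (a+b+c)_+(d_0+s_0)$. This gives bijectivity and the displayed formula simultaneously.

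For necessity I will reduce to a finite subring where injectivity and surjectivity coincide. Choose $n$ and an odd $k$ such that all coefficients $a,b,c,d$, and in addition any chosen target $s$ or witness, have $N(\cdot)\le n$ and $K(\cdot)\mid k$. Let $F$ be the finite set of $y$ with $y = (y_{-n})_{|k}$; it is closed under sums and products, so $P(F)\subseteq F$. The map $\pi: y\mapsto (y_{-n})_{|k}$ is additive, and it is multiplicative as noted in Proposition \ref{proposition:division_small_cycles}. It fixes the coefficients, so $\pi(P(x)) = P(\pi(x))$. Consequently, if $P$ is surjective on $S$ then it is surjective on $F$: for $s\in F$ with $P(x)=s$ we get $P(\pi(x)) = s$. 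If $P$ is injective on $S$ it is injective on $F$ trivially. Since $F$ is finite, in either case $P|_F$ is a bijection of $F$.

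Now let $e = a_0+b_0+c_0 \in S_0$. If $e\ne C_1$, enlarge $k$ so that $e$ and $1+e$ lie in $F$. The $0$-parts of $P(F)$ then lie in $eF_0 + d_0$, which is a proper subset of $F_0$ because it misses $d_0 + (1+e)$. This contradicts surjectivity on $F$, so $e = C_1$. Next, fix $x_0\in F_0$ and put $f = a_0x_0 + c_0$. On the fibre $\{x_0 + x_+ : x_+ \in F_+\}$ (taking $n\ge1$), $P$ acts as $x_+ \mapsto f x_+$ plus a constant. If $f\ne C_1$, this map sends both $0$ and $(1+f)C_2$ to $0$, so $P|_F$ is not injective, a contradiction. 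Hence $a_0x_0 + c_0 = C_1$ for every $x_0\in F_0$. Taking $x_0=0$ gives $c_0 = C_1$; then $a_0x_0 = 0$ for all $x_0$, and $x_0 = a_0$ gives $a_0 = 0$. Finally $b_0 = e + a_0 + c_0 = 0$.

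I expect the main obstacle to be this reduction to $F$, in particular justifying that $\pi$ commutes with $P$ and that surjectivity transfers. Once that is in place, the two counting and fibre arguments above are routine computations in the Boolean algebra $S_0$.
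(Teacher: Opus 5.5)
Your proof is correct. The sufficiency half (the formulas for $P(x)_0$, $P(x)_+$ and the explicit inverse) is the paper's computation, and your fibre collision $x_0$ versus $x_0+(1+f)C_2$ is exactly the paper's first injectivity argument. You differ genuinely in how you organise necessity. The paper treats the two hypotheses separately and explicitly. For injectivity it adds a second collision, $0$ versus $y=b_0+(a+b+c)_+b_0$, to force $b_0=0$. For surjectivity it exhibits the targets $C_2$ and $C_1+C_2+(a+b+c)_+$, and reads off $(a+b+c)_0=1$, $c_0=1$ and $a_0=0$. You instead pass to the finite subring $F$ with the ring retraction $\pi$, so that injectivity and surjectivity coincide on $F$. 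You then need only one obstruction per condition: the unreachable target $d_0+1+e$ to force $e=C_1$, and the fibre collision to force $a_0x_0+c_0=C_1$. What your route buys is an Ax--Grothendieck-type argument showing a priori that any polynomial self-map of $S$ is injective iff surjective, since $S$ is a directed union of finite subrings with retractions. The paper's route is more elementary and produces explicit certificates of non-injectivity and non-surjectivity without that machinery. Three small remarks. First, $e$ and $1+e$ already lie in $F$ because $F$ is a subring containing $a,b,c$, so no enlargement of $k$ is needed. Second, the retraction is only genuinely needed for transferring surjectivity from $S$ to $F$. Third, the paper's displayed formula for $a_{|k}$ literally keeps $C_q$ with $q\mid k$, which for odd $k$ would discard $C_2$. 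So you should define $F$ explicitly as the span of the $C_{2^i m}$ with $i\le n$ and $m\mid k$, and verify multiplicativity of $\pi$ directly on products $C_qC_r$ rather than citing Proposition~\ref{proposition:division_small_cycles}.
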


\begin{proof}
Firstly, suppose $(a_0 , b_0, c_0) = (0,0,1)$. The element $g = (a + b + c)_+ + 1$ is a unit, and we have
\[
    P(x) = a x^3 + b x^2 + c x + d = a_+ x_0 + b_+ x_0 + c_+ x + x + d = g x_0 + x_+ + d.
\]
Therefore, for any $s \in S$, we have
\[
    P(x) = s \iff g x_0 + x_+ = d + s,
\]
which yields $(g x_0 + x_+)_0 = x_0 = d_0 + s_0$ and $(g x_0 + x_+)_+ = g_+ (d_0 + s_0) + x_+ = d_+ + s_+$, thus $x_+ = d_+ + s_+ + (a + b + c)_+ (d_0 + s_0)$. (And in particular, $P$ is bijective.)

The rest of the proof is devoted to proving that an injective, or surjective, polynomial $P$ has $(a_0, b_0, c_0) = (0,0,1)$. Clearly, the constant term $d$ has no effect on injectivity or surjectivity, hence we assume $d = 0$ henceforth.

Suppose first that $P = ax^3+bx^2+c$ is injective aiming to show that $(a_0, b_0, c_0) = (0,0,1)$. Suppose for contradiction $(a_0, c_0) \ne (0,1)$.  Simple calculations yield
\begin{align}
    \label{equation:Px0}
    P(x)_0 &= (a + b + c)_0 x_0, \\
    \label{equation:Px+}
    P(x)_+ &= (a_0 x_0 + c_0) x_+ + (a + b + c)_+ x_0.
\end{align}
We note that for any $\alpha_0, \gamma_0 \in S_0$, we have $\alpha_0 x_0 + \gamma_0 = 1$ for all $x_0 \in S_0$ if and only if $(\alpha_0, \gamma_0) = (0,1)$. Indeed setting $x_0 = 0$ forces $\gamma_0 = 1$ and then setting $x_0 = 1$ gives $1 = \alpha_0 + \gamma_0 = \alpha_0 + 1$ giving $\alpha_0 = 0$. Since $(a_0, c_0) \ne (0,1)$, we conclude that there exists $x_0$ such that $\mu_0 = a_0 x_0 + c_0 \ne 1$. Let 
\[
    x = x_0, \quad y = x_0 + (\mu_0 + 1) C_2.
\]
Since $x_0 = y_0$, we have $P(x)_0 = P(y)_0$ by Equation \eqref{equation:Px0}, and 
\[
    P(y)_+ = (a_0 x_0 + c_0) (\mu_0 + 1) C_2 + (a + b + c)_+ x_0 = \mu_0 (\mu_0 + 1) C_2 + P(x)_+ = P(x)_+
\]
by Equation \eqref{equation:Px+}. Thus, $P$ is not injective. Next, suppose $(a_0, c_0) = (0,1)$ but $b_0 \ne 0$. Again, we obtain 
\begin{align*}
    P(x)_0 &= (b_0 + 1) x_0, \\
    P(x)_+ &= x_+ + (a + b + c)_+ x_0.
\end{align*}
Therefore, if $x = 0$ and $y = b_0 + (a + b + c)_+ b_0$, we obtain $P(x) = P(y) = 0$. Thus, $P$ is not injective once more.

Finally, we prove that if $P$ is surjective, then $(a_0, b_0, c_0) = (0,0,1)$. We consider $x, y \in S$ such that
\begin{align*}
    P(x) &= C_1 + C_2 + (a + b + c)_+ \\
    P(y) &= C_2.
\end{align*}
First, $P(x)_0 = (a + b + c)_0 x_0 = 1$, which implies $x_0 = 1$ and $(a + b + c)_0 = 1$. Second, $P(y)_0 = ( a + b + c )_0 y_0 = y_0 = 0$, and hence $P(y)_+ = c_0 y_+ = C_2$, which implies $c_0 = 1$. Third, $P(x)_+ = (a_0 + 1) x_+ + (a + b + c)_+ = C_2 + (a + b + c)_+$, and hence $(a_0 + 1)x_+ = C_2$, which implies $a_0 = 0$. We obtain $(a_0, b_0, c_0) = (0,0,1)$.
\end{proof}

\subsection{Intersection of principal ideals} \label{subsection:intersection_principal_ideals}

In this subsection, we are interested in the intersection $xS \cap yS$ of two principal ideals. The main question is whether this intersection remains a principal ideal. We are not able to fully answer this question, but we prove that under natural constraints, $xS \cap yS$ is indeed a principal ideal. On the other hand, we have not yet found any example of $x, y \in S$ such that $xS \cap yS$ is not a principal ideal.

We first prove that we can focus on the case where $x \tilde{\mathcal{R}} y$.

\begin{lemma} \label{lemma:alpha_beta}
Let $x,y \in S$. Denoting $\alpha = x y^+$ and $\beta = y x^+$, we have $xS \cap yS = \alpha S \cap \beta S$ and $\alpha \tilde{\mathcal{R}} \beta$.
\end{lemma}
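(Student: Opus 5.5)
The plan is to prove the two claims separately: first the $\tilde{\mathcal{R}}$-relation, which reduces to the identity $e x^+ = (ex)^+$, and then the equality of ideals, using the fact that $y^+$ fixes every element of $yS$. Throughout we use the facts recorded after the definition of $x^+$ in Section \ref{subsection:division_in_S}: for all $x \in S$ and all idempotents $e \in S_0$, we have $x^+ x = x$ and $e x^+ = (ex)^+$.

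For the $\tilde{\mathcal{R}}$-relation, Proposition \ref{prop:green} tells us that $\alpha \tilde{\mathcal{R}} \beta$ holds if and only if $\alpha^+ = \beta^+$. Since $y^+ \in S_0$ is idempotent, taking $e = y^+$ gives $\alpha^+ = (y^+ x)^+ = y^+ x^+$. Symmetrically, taking $e = x^+$ gives $\beta^+ = (x^+ y)^+ = x^+ y^+$. By commutativity these agree, so $\alpha \tilde{\mathcal{R}} \beta$.

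For the equality of ideals, the inclusion $\alpha S \cap \beta S \subseteq xS \cap yS$ is immediate, since $\alpha = x y^+ \in xS$ and $\beta = y x^+ \in yS$. For the reverse inclusion, let $z \in xS \cap yS$, and write $z = xs = yt$ with $s, t \in S$.

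The key observation is that $y^+$ fixes every element of $yS$. Indeed, $y^+ z = y^+ y t = y t = z$. Hence
\[
    z = y^+ z = y^+ x s = \alpha s \in \alpha S.
\]
Symmetrically, $x^+$ fixes every element of $xS$, so
\[
    z = x^+ z = x^+ y t = \beta t \in \beta S.
\]
Therefore $z \in \alpha S \cap \beta S$, which gives the reverse inclusion.

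There is no real obstacle in this argument. The only point needing care is to justify the identity $(y^+ x)^+ = y^+ x^+$, which is exactly the listed property $e x^+ = (ex)^+$ applied with $e = y^+$, the latter being idempotent since it lies in $S_0$.
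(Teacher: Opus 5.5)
Your proof is correct and follows essentially the same route as the paper: $\alpha^+ = x^+y^+ = \beta^+$ via $e x^+ = (ex)^+$, the trivial inclusion $\alpha S \cap \beta S \subseteq xS \cap yS$, and the reverse inclusion from the fact that $y^+$ fixes $yS$ and $x^+$ fixes $xS$. Your bookkeeping $z = \alpha s = \beta t$ is also slightly cleaner than the paper's written version, which swaps the labels $\alpha$ and $\beta$ at that step.
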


\begin{proof}
Set $\alpha = x y^+$ and $\beta = y x^+$.
It is then clear that $\alpha S  = x y^+ S \subseteq xS$ and $\beta S  = y x^+ S \subseteq yS$ and hence $\alpha S\cap \beta S \subseteq xS \cap yS$. Furthermore,
$\alpha^+ = x^+ y^+ = \beta^+$.  Now suppose that $m = xa = yb \in xS \cap yS$. 
Then $m = xa = x^+ xa = x^+ m = x^+ y b = \alpha b$ and by symmetry $m = \beta a$, giving $m \in \alpha S \cap \beta S$.  
\end{proof}

\begin{proposition} \label{proposition:x_regular}    
Let $x,y \in S$ with $x$ regular. Then $xS \cap yS = xyS$.
\end{proposition}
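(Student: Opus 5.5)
The inclusion $xyS \subseteq xS \cap yS$ is immediate, since $xyS = x(yS) = y(xS)$ and $S$ is commutative. So the plan is to prove the reverse inclusion. Take $m \in xS \cap yS$ and write $m = xa = yb$ with $a, b \in S$. The goal is to exhibit $m$ as a multiple of $xy$.

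The key input is the classification of regular elements in Theorem \ref{theorem:classification_regular_elements}. Since $x$ is regular, we have $a \mathcal{R} a^2$ applied to $x$, and the characterisation gives $x = x^2 g$ for some unit $g \in G$. Equivalently, $x^3 = x$ and $x^+ = x^2$.

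The element $x^2 = x_0$ is the idempotent associated with $x$. From $m = xa$ we get
\[
    x^2 m = x^3 a = xa = m .
\]
This can also be seen from $x^+ m = m$, as in the proof of Lemma \ref{lemma:alpha_beta}, combined with $x^+ = x^2$.

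Now use the other expression $m = yb$:
\[
    m = x^2 m = x^2 y b = xy (x b).
\]
Hence $m \in xyS$. This gives $xS \cap yS \subseteq xyS$, and equality follows.

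There is no real obstacle here. The only step needing care is justifying $x^2 m = m$. This rests on $x^3 = x$ from Theorem \ref{theorem:classification_regular_elements}, or equivalently on $x^+ = x^2$ together with the identity $x^+ x = x$ recorded before Corollary \ref{corollary:az=0}.

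Note that regularity of $y$ is not needed. The argument also does not require passing to the $\tilde{\mathcal{R}}$-related pair $(\alpha, \beta)$ of Lemma \ref{lemma:alpha_beta}, although one could phrase it that way: $\alpha = xy^+$ is again regular, and $\alpha\beta = xy\,x^+y^+ = xy$.
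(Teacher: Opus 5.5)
Your proof is correct and is essentially the paper's argument unpacked. The paper replaces $x$ by the idempotent $x^2$ and invokes Lemma~\ref{lemma:alpha_beta}, whose proof contains exactly your step $m = x^+ m = x^+ y b$, to get $\beta = yx = \alpha y$ and hence $xS \cap yS = \beta S = xyS$. Working directly with $x^3 = x$, as you do, has the small advantage of avoiding the check (left implicit in the paper) that $x^2 yS = xyS$ after that reduction.
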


\begin{proof}
If $x$ is regular, then (by Theorem \ref{theorem:classification_regular_elements}) $xS = x^2S$ where $x^2$ is idempotent, so without loss of generality suppose $x$ is idempotent giving $x^+ = x$. Using the notation from Lemma \ref{lemma:alpha_beta}, we have
\[
    \beta = y x^+ = yx = y y^+ x = \alpha y,
\]
thus $\beta \in \alpha S$ and $xS \cap yS = \alpha S \cap \beta S = \beta S= xyS$.
\end{proof}

One can easily construct examples in which $xS \cap yS \neq xyS$; for instance if $x = y=C_2$, then $C_2 \in C_2S = xS \cap yS \neq \{0\}$, whilst $xyS = \{0\}$, since $xy=0$. This is a special case of the following result.

\begin{proposition} \label{proposition:xy=0}
If $xy = 0$, then $xS \cap yS = \gamma_{x,y}S$, where $\gamma_{x,y} = (1 + (x+y)^+ )x = (1 + (x+y)^+ )y$.
\end{proposition}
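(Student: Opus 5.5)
We argue directly from the decomposition $s=\sum_i s_iC_{2^i}$, using Corollary~\ref{corollary:az=0} to translate $xy=0$ into the Boolean conditions $x_0y^+=0$ and $y_0x^+=0$. Write $e=(x+y)^+$. The plan has three steps: show that the two expressions for $\gamma_{x,y}$ agree, show $\gamma_{x,y}S\subseteq xS\cap yS$, and then prove the reverse inclusion. Only the last step takes real work.

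\emph{The two expressions agree.} We have $e(x+y)=(x+y)^+(x+y)=x+y$. Hence $(1+e)(x+y)=0$, that is, $(1+e)x=(1+e)y$. Call this common value $\gamma$. It is visibly a multiple of both $x$ and $y$, so $\gamma S\subseteq xS\cap yS$.

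\emph{Reduction of the reverse inclusion.} Let $m=xa=yb$ be an element of the intersection. The idea is to show that $m(1+e)=m$, i.e. $me=0$. Once this holds, $m=(1+e)m=(1+e)xa=\gamma a\in\gamma S$. Since $e=\bigvee_j(x_j+y_j)$ and each $m_i$ is idempotent, $me=0$ is equivalent to $m_i(x_j+y_j)=0$, i.e. $m_ix_j=m_iy_j$, for all $i,j\in\N$.

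\emph{Explicit formulas for the $m_i$.} By Lemma~\ref{lem:basis}(\ref{item:xy}), $m_0=x_0a_0=y_0b_0$. This gives $m_0\le x_0$ and $m_0\le y_0\le y^+$, so $m_0\le x_0y^+=0$. Next, $m=x^+m=y^+m$ because $x^+x=x$ and $y^+y=y$. Expanding $m_i=x_0a_i+x_ia_0$ and multiplying by $y^+$, the term $y^+x_0$ vanishes. Doing the same with $m=yb$ and $x^+$ gives, for all $i$,
\[
m_i = y^+x_ia_0 = x^+y_ib_0 .
\]
In particular $m_i\le y^+$, $m_i\le a_0$, $m_i\le x^+$ and $m_i\le b_0$.

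\emph{The key step.} We must show $m_ix_j=m_iy_j$. Because $m_i$ is idempotent and lies below $y^+$ and $a_0$,
\[
m_ix_j = m_i\,(y^+x_ja_0) = m_im_j .
\]
Symmetrically, since $m_i$ lies below $x^+$ and $b_0$, we get $m_iy_j=m_i(x^+y_jb_0)=m_im_j$. Hence $m_ix_j=m_iy_j$ for all $i,j$, which gives $me=0$ and completes the proof.

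The main obstacle is this last step: finding the right symmetric expression for $m_i$, in which each coordinate of $m$ is forced below both $x^+$ and $y^+$. With it, both $m_ix_j$ and $m_iy_j$ collapse to the same product $m_im_j$. The remaining steps are routine manipulations in the Boolean algebra $S_0$.
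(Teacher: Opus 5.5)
Your proof is correct, but it takes a somewhat different route from the paper's.

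\textbf{The paper's route.} The paper first invokes Lemma~\ref{lemma:alpha_beta} to assume without loss of generality that $x^+=y^+$. It then shows $x_0a_i=y_0b_i=0$, so that $m=a_0x=b_0y$. Next it uses $x^+=y^+$ to get $a_0x^+=m^+=b_0y^+$, hence $a_0(x+y)=0$. Finally it applies Corollary~\ref{corollary:az=0} to conclude $a_0(x+y)^+=0$, that is, $m=\gamma_{x,y}a_0$.

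\textbf{Your route.} You never replace $x,y$. The observation $m=x^+m=y^+m$ gives directly $m=a_0xy^+=b_0yx^+$, which is your coordinate formula and plays the role of the paper's $m=a_0x=b_0y$. You then annihilate $m$ itself by $(x+y)^+$, through $m_ix_j=m_im_j=m_iy_j$, rather than annihilating the multiplier $a_0$.

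\textbf{What each route buys.} Yours is self-contained. The paper's without-loss-of-generality step silently requires $\gamma_{xy^+,\,yx^+}=\gamma_{x,y}$. This does hold, since $(xy^++yx^+)^+=(x+y)^+x^+y^+$ and $\gamma_{x,y}$ is fixed by $x^+y^+$, but it is not verified there. In exchange, the paper's route reuses the general lemmas and exhibits an idempotent multiplier, $m=\gamma_{x,y}a_0$.
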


\begin{proof}
Suppose now that $xy=0$, so that
\begin{equation}
\label{eq:xy=0}
x_0y_0=0, \mbox{ and } x_0y_i+y_0x_i=0 \mbox{ for all } i>0,
\end{equation}
and consider $xS \cap yS$. By Lemma \ref{lemma:alpha_beta} and Proposition \ref{prop:green} we may assume without loss of generality that $x^+ = y^+$. It follows from the fact that $(x+y)^+ (x+y) = x+y$ that $(1 + (x+y)^+)x = (1 + (x+y)^+ )y$. Calling this element $\gamma_{x,y}$ it is clear that $\gamma_{x,y}S \subseteq xS \cap yS$. We aim to show the converse. Suppose then that $m = xa=yb \in xS \cap yS$ so that
\begin{equation}
\label{eq:m}
m_0=x_0a_0=y_0b_0, \mbox{ and } m_i= y_0b_i+b_0y_i=x_0a_i+a_0x_i \mbox{ for all } i>0.
\end{equation}
Idempotency of $x_0$ together with Equations \eqref{eq:xy=0} and \eqref{eq:m} then gives \begin{eqnarray*}
m_0 &=& x_0a_0 = x_0^2a_0 = x_0y_0b_0 = 0,\\
x_0m_i &=& x_0^2a_i+x_0a_0x_i = x_0a_i,  \mbox{ and}\\   
x_0m_i &=&x_0y_0b_i + x_0b_0y_i = 0 + y_0b_0x_i = 0.
\end{eqnarray*}
Thus $x_0m_i=x_0a_i=0$ for all $i$, and a dual argument (this time using idempotency of $y_0$) also shows that $y_0m_i=y_0b_i=0$ for all $i$. But then by Equation \eqref{eq:m} we have $m_0= a_0x_0=b_0y_0$ and
$m_i = x_0a_i + a_0x_i = a_0x_i$ and $m_i = y_0b_i + b_0y_i = b_0y_i$ for all $i>0$, together giving $$m=a_0x =b_0y.$$
Now, recalling that $x^+ = y^+$, we have
\[
    a_0 x^+ = (a_0 x)^+ = m^+ = (b_0 y)^+ = b_0 y^+,
\]
and so
$$a_0(x+y) = a_0x+a_0y = a_0x + a_0 x^+ y = a_0x + b_0y =m+m= 0.$$
By Corollary \ref{corollary:az=0}, we obtain $a_0 (x+y)^+ = 0$, and hence
\[
    m = a_0 x = a_0 x +  a_0 (x+y)^+ x = \gamma_{x,y} a_0 \in \gamma_{x,y} S.
\]
\end{proof}

\section{The semiring of injective partial transformations} \label{section:injective}

In this section, we now consider all injective partial transformations, i.e. sums of chains 
$L_d$ and cycles $C_e$. It is straightforward to verify that (working with coefficients in $\N$) for any $e, d\geq 1$ we have
\begin{eqnarray*}
L_eL_d  &=& (|d-e|+1)L_{{\rm min}(d,e)} + \sum_{i=1}^{{\rm min}(d,e)-1} 2L_i,\\ 
C_eC_d &=& {\rm gcd}(d,e) C_{\lcm(d,e)},\\
L_eC_d &=& dL_e.
\end{eqnarray*}

Recall that the set of all chains is denoted by $L$ and that the set cycles is denoted by $C$. When working in $\F_2(L \cup C)$, we obtain
\[
    L_e L_d = \begin{cases}
        L_{\min( d,e )} & \text{if } e \equiv d \mod 2 \\
        0 &\text{otherwise}.
    \end{cases}
\]

\subsection{The lattice generated by $L$ with $\F_2$} \label{subsection:semiring_L2}

We first consider the case of sums of chains modulo $2$, i.e. $\F_2 L$. We first remark that even and odd lengths do not interact, as $L_d L_e = 0$ if $e$ and $d$ have different parities. We can then decompose our problem into a problem for the even chains and another for the odd chains. More precisely, for $\epsilon \in \{0,1\}$, let 
\[
    L^{(\epsilon)} = \left\{ a \in \F_2 L : a = \sum_{j \ge 1} \alpha_{2j - \epsilon} L_{2j - \epsilon} \right\}, 
\]
i.e. $L^{(0)}$ involves even chains only, while $L^{(1)}$ involves odd chains instead. Then any $a \in \F_2 L$ can be uniquely decomposed as $a = a^{(0)} + a^{(1)}$, where $a^{(0)} = \sum_{ j \equiv 0 \mod 2 } \alpha_j L_j \in L^{(0)}$ and $a^{(1)} = \sum_{ j \equiv 1 \mod 2 } \alpha_j L_j \in L^{(1)}$. Therefore,
\[
    a x = b \quad \iff \quad b^{(0)} = a^{(0)} x^{(0)} \text{ and } b^{(1)} = a^{(1)} x^{(1)}.
\]
As such, we consider the division problem in $L^{(1)}$ only; the case for $L^{(0)}$ is equivalent.

In $\F_2 L$, $L_d^2 = L_d$, hence the product forms a semilattice. In fact, it is isomorphic to the lattice of finite subsets of $\N$. Indeed, let $Z_1 = L_1$, and $Z_i = L_i + L_{i-2}$ for all odd $i \ge 3$. Now, for all odd $i, j$ we have: 
\[
    Z_i Z_j = \delta_{ij} Z_i,
\]
so that if $a = \sum_i a_i Z_i$ and $b = \sum_j b_j Z_j$, we have $ab = \sum_k a_k b_k Z_k$. In other words, if we identify $a$ with its support ($S(a) = \{ i : a_i = 1\}$), then $S(ab) = S(a) \cap S(b)$.
We note that, due to the absence of the multiplicative identity $C_1$, the lattice $\F_2 L$ is not a Boolean algebra. However, the sublattice generated by $\mathcal{L}_k = \{ Z_1, Z_3, \dots, Z_k\}$ is indeed a Boolean algebra $V( \mathcal{L}_k )$, with atoms $Z_1, \dots, Z_k$ and upper bound given by $Z_1 + Z_3 + \dots + Z_k = L_k$. For instance, the Boolean algebra generated by $Z_1, Z_3, Z_5$ is given in Figure \ref{figure:Z1Z3Z5}.

\begin{figure}
\centering
\begin{tikzpicture}[xscale=5, yscale=2, every node/.style=draw,thick]
    \node (000) at (0,0) {$0$};
    
    \node (100) at (-1,1) {$Z_1 = L_1$};
    \node (010) at (0,1) {$Z_3 = L_1 + L_3$};
    \node (001) at (1,1) {$Z_5 = L_3 + L_5$};

    \node (110) at (-1,2) {$Z_1 + Z_3 = L_3$};
    \node (101) at (0,2) {$Z_1 + Z_5 = L_1 + L_3 + L_5$};
    \node (011) at (1,2) {$Z_3 + Z_5 = L_1 + L_5$};

    \node (111) at (0,3) {$Z_1 + Z_3 + Z_5 = L_5$};

    \path[draw]
    (000) to (100)
    (001) to (101)
    (010) to (110)
    (011) to (111)
    (000) to (010)
    (001) to (011)
    (100) to (110)
    (101) to (111)
    (000) to (001)
    (010) to (011)
    (100) to (101)
    (110) to (111);

\end{tikzpicture}
    \caption{The Boolean algebra $V( \mathcal{L}_5 )$ generated by chains of height $1$, $3$, and $5$.} \label{figure:Z1Z3Z5}
\end{figure}
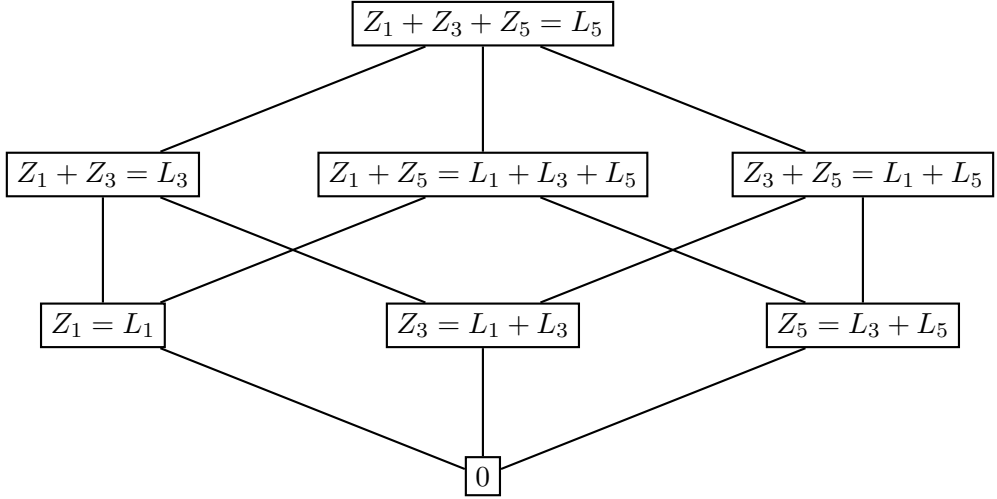

For any $a = \sum_j \alpha_j L_j$, we define the \Define{height} of $a$ as 
\[
    \Height( a ) = \max\{ j : \alpha_j = 1 \}, 
\]
with the convention that $\Height(0) = 0$. If $a, x, b \in L^{(1)}$ satisfy $a x = b$, then $\Height( b ) = \min\{ \Height( a ), \Height( x ) \}$. For any $x \in L^{(1)}$ and any $h \in \N$ odd, let
\begin{align*}
    x_{- h} &= \sum_{i \le h} x_i Z_i,\\
    x_{+ h} &= \sum_{j \ge h + 2} x_j Z_j,
\end{align*}
so that $x = x_{ -h } + x_{ +h }$.

\begin{theorem} \label{theorem:division_Lh}
Let $a, b \in L^{(1)}$ with $\Height( a ) = h$. Then $x \in L^{(1)}$ satisfies $a x = b$ if and only if $x_{ -h } \in [b, b + a + L_h]$ in the Boolean algebra $V( \mathcal{L}_h )$.
\end{theorem}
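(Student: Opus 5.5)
The plan is to work in the $Z$-basis, where multiplication in $L^{(1)}$ becomes coordinatewise. By the identity $Z_iZ_j=\delta_{ij}Z_i$, writing $a=\sum_i a_iZ_i$, $x=\sum_i x_iZ_i$ and $b=\sum_i b_iZ_i$, we have $ax=\sum_i a_ix_iZ_i$. The first step is to locate the supports of $a$ and $b$ with respect to $h$. Since $\Height(a)=h$, $a$ is a combination of $L_1,L_3,\dots,L_h$, and each $L_j$ with $j\le h$ odd equals $Z_1+Z_3+\dots+Z_j$. Hence $a\in V(\mathcal{L}_h)$, meaning $a_i=0$ for all $i>h$. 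Consequently
\[
ax=\sum_{i\le h}a_ix_iZ_i=a\,x_{-h},
\]
that is, $a x_{+h}=0$.

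Next I deal with $b$. If $ax=b$ for some $x$, then $b=ax$ is supported on indices $\le h$, so $b\in V(\mathcal{L}_h)$. Conversely, if $b\notin V(\mathcal{L}_h)$ (i.e.\ $\Height(b)>h$), then there is no solution. In that case the interval $[b,b+a+L_h]$ should also be empty, or at least contain no $x_{-h}$, since $x_{-h}\in V(\mathcal{L}_h)$ cannot be $\ge b$. I would state this case explicitly, reading ``$x_{-h}\in[b,b+a+L_h]$ in $V(\mathcal{L}_h)$'' as requiring $b\in V(\mathcal{L}_h)$. So from now on I assume $b\in V(\mathcal{L}_h)$.

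Then $ax=b$ is equivalent to $a\,x_{-h}=b$ with $a,b,x_{-h}$ all in the finite Boolean algebra $V(\mathcal{L}_h)$. This algebra has meet given by the product, atoms $Z_1,\dots,Z_h$, and top element $\one=L_h$. Proposition~\ref{proposition:ax=b_S0} applied in $\mathcal{V}=V(\mathcal{L}_h)$ gives exactly that the solutions are $x_{-h}\in[b,\,a+b+L_h]$. There is no constraint on $x_{+h}$, which is consistent with the statement mentioning only $x_{-h}$.

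The only delicate point is the first step: checking that $L_h$ is the top of $V(\mathcal{L}_h)$ and that $a\in V(\mathcal{L}_h)$. Both follow from the telescoping expression $L_j=\sum_{i\le j,\ i \text{ odd}}Z_i$ and the orthogonality $Z_iZ_j=\delta_{ij}Z_i$. The handling of $b$ outside $V(\mathcal{L}_h)$ is a matter of convention rather than a real obstacle.
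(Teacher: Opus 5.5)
Your proof is correct and follows the paper's argument: first show $a x_{+h}=0$, so that $ax=b$ reduces to $a x_{-h}=b$, then apply Proposition~\ref{proposition:ax=b_S0} in $V(\mathcal{L}_h)$ with top element $L_h$. You get $a x_{+h}=0$ from the orthogonality $Z_iZ_j=\delta_{ij}Z_i$, where the paper uses $aL_k=a$ for $k\ge h$ together with $Z_{k+2}=L_{k+2}+L_k$; your explicit treatment of the case $\Height(b)>h$, where both sides are empty, is a detail the paper leaves implicit.
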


\begin{proof}
For any $k \ge h = \Height(a)$, we have $a L_k = a$ and hence $a Z_{k+2} = a ( L_{k+2} + L_k ) = a+a =0$, from which it follows that $a x_{ +h } = 0$ for any $x \in L^{(1)}$. We obtain
\[
    ax = b \iff a x_{ -h } = b,
\]
and the latter is, according to Proposition \ref{proposition:ax=b_S0}, equivalent to $x_{ -h } \in [b, b + a + L_h]$ in the Boolean algebra $V( \mathcal{L}_h )$.
\end{proof}

\subsection{Dividing in the semiring generated by $L, C$ with $\F_2$} \label{subsection:division_LC}

Any $d \in \F_2(L \cup C)$ can be uniquely decomposed as $d = d^{(0)} + d^{(1)} + d^{(C)}$, where $d^{(\epsilon)} \in L^{(\epsilon)}$ for $\epsilon \in \{0,1\}$ and $d^{(C)} \in \F_2 C = S$. In order to keep notation simple, we denote $d^{(C)} = \sum_i d_i C_{2^i}$ where $d_i \in S_0$ for all $i$.

We now give a characterisation of the set of solutions to the division equation $ax = b$ in $\F_2( L \cup C )$. First, we introduce some useful notation. For all $i \in \N$, let $\lambda_i, \upsilon_i$ be as in Theorem \ref{theorem:ax=b} for the division $a^{(C)} x^{(C)} = b^{(C)}$. For all $t \in \{0,1\}$, let $X_t$ denote the set of solutions to $a^{(C)} x^{(C)} = b^{(C)}$, where $|x^{(C)}| \equiv t \mod 2$. According to Theorem \ref{theorem:ax=b}, this set is given by
\[
    X_t = \left\{ x^{(C)} \in S : x_i \in [ \lambda_i, \upsilon_i ] \ \forall i \in \N, |x^{(C)}| \equiv t \mod 2 \right\}.
\]
For $\epsilon \in \{0,1\}$, let
\[
    h = h^{(\epsilon)} = \begin{cases}
        \Height( a^{(\epsilon)} ) &\text{ if } |a^{(C)}| \equiv 0 \mod 2, \\
        \max ( \Height( a^{(\epsilon)} ), \Height( b^{(\epsilon)} ) ) &\text{ if } |a^{(C)}| \equiv 1 \mod 2.
    \end{cases}
\]
Then let
\[
    Y_t^{(\epsilon)} = \begin{cases} 
    \{ x^{(\epsilon)} \in L^{(\epsilon)} :  x^{(\epsilon)}_{ -h } \in [ b^{(\epsilon)} + t a^{(\epsilon)}, b^{(\epsilon)} + (t + 1) a^{(\epsilon)} + L_h ] \subseteq V( \mathcal{L}_h ) \} & \text{if } |a^{(C)}| \equiv 0 \mod 2,\\
    \{ x^{(\epsilon)} \in L^{(\epsilon)} : x^{(\epsilon)} = x^{(\epsilon)}_{ -h } \in [ b^{(\epsilon)} + t a^{(\epsilon)}, b^{(\epsilon)} + (t + 1) a^{(\epsilon)} ] \subseteq V( \mathcal{L}_h ) \} & \text{if } |a^{(C)}| \equiv 1 \mod 2.
    \end{cases}
\]

\begin{theorem} \label{theorem:division_LC}
Let $a, b, x \in \F_2(L \cup C)$. Then, following the notation above, $ax = b$ if and only if $x^{(C)} \in X_t$ for some $t \in \{0,1\}$ and $x^{(\epsilon)} \in  Y_t^{(\epsilon)}$ for all $\epsilon \in \{0,1\}$.
\end{theorem}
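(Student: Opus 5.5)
We will expand the product $ax$ componentwise and use the multiplication rules of $\F_2(L \cup C)$. Since $L_e C_d = d L_e$, over $\F_2$ we have $L_e C_d = L_e$ if $d$ is odd and $0$ if $d$ is even. Hence for any $y \in \F_2 L$ and $z \in S$ we get $y z = |z| \, y$, where $|z|$ denotes the number of cycles in $z$ modulo $2$. Note that $|z|$ is the same whether we count all cycles or only odd ones, since the even-cycle terms contribute $0$ to $L_eC_d$. (Presumably $|x^{(C)}|$ in the statement should therefore be understood as the parity counting only the odd-length cycles, i.e.\ $|x_0|$. I will check that this agrees with the paper's convention and otherwise adapt.) Chains of different parity annihilate each other, and cycles multiply inside $S$.

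Writing $t = |x^{(C)}|$ and $s = |a^{(C)}|$, this gives
\[
    (ax)^{(C)} = a^{(C)} x^{(C)}, \qquad (ax)^{(\epsilon)} = a^{(\epsilon)} x^{(\epsilon)} + t\, a^{(\epsilon)} + s\, x^{(\epsilon)} .
\]
So $ax = b$ is equivalent to three conditions: $a^{(C)}x^{(C)} = b^{(C)}$, which by Theorem \ref{theorem:ax=b} means $x^{(C)} \in X_t$ for $t = |x^{(C)}|$; and, for each $\epsilon$, the equation $a^{(\epsilon)} x^{(\epsilon)} + s\, x^{(\epsilon)} = b^{(\epsilon)} + t\, a^{(\epsilon)}$. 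The first condition fixes the parity $t$, which accounts for the ``for some $t$'' in the statement. It remains to solve the chain equation in the two cases for $s$.

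\emph{Case $s = 0$.} The equation is $a^{(\epsilon)} x^{(\epsilon)} = b^{(\epsilon)} + t a^{(\epsilon)}$. Theorem \ref{theorem:division_Lh}, applied with $h = \Height(a^{(\epsilon)})$ and right-hand side $b' = b^{(\epsilon)} + t a^{(\epsilon)}$, gives the condition $x_{-h} \in [b', b' + a + L_h] = [b + ta,\, b + (t+1)a + L_h]$, which is exactly $Y^{(\epsilon)}_t$. One point needs care: Theorem \ref{theorem:division_Lh} takes $b' \in L^{(1)}$ arbitrary, and nonemptiness of the interval already encodes the solvability condition $b' \le a$ (via Proposition \ref{proposition:ax=b_S0}), so nothing extra has to be checked. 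The case $\epsilon = 0$ is handled identically.

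\emph{Case $s = 1$.} The equation becomes $(a + L_k)\,x = b + t a$ for any $k$ large enough. I will work in the Boolean algebra $V(\mathcal{L}_h)$ with $h = \max(\Height(a), \Height(b))$. First I show that $x_{+h} = 0$. The $Z_j$-coefficient of $(a + 1)x$ for $j > h$ is $(a_j + 1)x_j = x_j$, because $a_j = 0$ there. The right-hand side has zero coefficient there because $\Height(b), \Height(a) \le h$. Hence $x_j = 0$ for all $j > h$. Inside $V(\mathcal{L}_h)$ the equation reads $\neg a \cdot x = b + ta$, whose solution set by Proposition \ref{proposition:ax=b_S0} is $[\,b + ta,\ \neg a + b + ta + L_h\,] = [\,b + ta,\ b + (t+1)a\,]$. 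This matches $Y^{(\epsilon)}_t$. Combining the cases proves both directions, since every step above is an equivalence.

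The step I expect to be the main obstacle is the bookkeeping of the parity $|\cdot|$. It has to be verified carefully that $L_e C_d$ depends only on the parity of $d$, and that $t$ is determined by $x^{(C)}$ alone, so that the coupling between the cycle part and the chain part goes only through the single bit $t$. The $s = 1$ reduction to finite height is the other delicate point, but the coefficientwise argument in the $Z$-basis should handle it directly.
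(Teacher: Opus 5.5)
Your proof is correct and is essentially the paper's argument: you use the same split of $ax=b$ into $a^{(C)}x^{(C)}=b^{(C)}$ plus the two chain equations coupled only through the parity bit $t$, the same case distinction on the parity of $a^{(C)}$, and in the odd case the same height bound followed by division in $V(\mathcal{L}_h)$ (applying Proposition~\ref{proposition:ax=b_S0} directly there, as you do, is slightly cleaner than the paper's appeal to Theorem~\ref{theorem:division_Lh}). Reading $|x^{(C)}|$ and $|a^{(C)}|$ as the parity of the number of odd-length cycles is the right interpretation, which the paper leaves implicit, because $L_eC_d = dL_e$; but delete your remark that this agrees with the total number of cycles mod $2$, since that already fails for $z = C_2$.
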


\begin{proof}
We have $ax = b$ if and only if
\begin{align}
    \label{equation:bc}
    b^{(C)} &= a^{(C)} x^{(C)}, \\
    \label{equation:bepsilon}
    b^{(\epsilon)} &= a^{(\epsilon)} x^{(\epsilon)} + a^{(\epsilon)} x^{(C)} + a^{(C)} x^{(\epsilon)}, \forall \epsilon \in \{0,1\}.
\end{align}

Firstly, thanks to Theorem \ref{theorem:ax=b}, we can solve Equation \eqref{equation:bc}. 
As mentioned above, the set of $x^{(C)}$ solutions to Equation \eqref{equation:bc} with $|x^{(C)}| \equiv t \mod 2$ for each $t \in \{0,1\}$ is $X_t$.

Secondly, for $|x^{(C)}| \equiv t \mod 2$, Equation \eqref{equation:bepsilon} is equivalent to
\begin{equation}
    \label{equation:bepsilont}
    b^{(\epsilon)} = a^{(\epsilon)} x^{(\epsilon)} + t a^{(\epsilon)} + a^{(C)} x^{(\epsilon)}.
\end{equation}

\begin{claim}
Suppose $|a^{(C)}| \equiv 0 \mod 2$. Denoting $h = \Height( a^{(\epsilon)} )$, $x^{(\epsilon)}$ is a solution to Equation \eqref{equation:bepsilont} if and only if
\[
    x^{(\epsilon)}_{ -h } \in [ b^{(\epsilon)} + t a^{(\epsilon)}, b^{(\epsilon)} + (t + 1) a^{(\epsilon)} + L_h ]
\]
in $V( \mathcal{L}_h )$.
\end{claim}

\begin{proof}
If $|a^{(C)}| \equiv 0 \mod 2$, then Equation \eqref{equation:bepsilont} reduces to
\[
    a^{(\epsilon)} x^{(\epsilon)} = b^{(\epsilon)} + t a^{(\epsilon)},
\]
which by Theorem \ref{theorem:division_Lh}, yields $x^{(\epsilon)}_{ -h } \in [ b^{(\epsilon)} + t a^{(\epsilon)}, b^{(\epsilon)} + (t + 1) a^{(\epsilon)} + L_h ]$.
\end{proof}

\begin{claim}
Suppose $|a^{(C)}| \equiv 1 \mod 2$. Denoting $h = \max ( \Height( a^{(\epsilon)} ), \Height( b^{(\epsilon)} ) )$, $x^{(\epsilon)}$ is a solution to Equation \eqref{equation:bepsilont} if and only if
\[
    x^{(\epsilon)} = x^{(\epsilon)}_{ -h } \in [ b^{(\epsilon)} + t a^{(\epsilon)}, b^{(\epsilon)} + (t + 1) a^{(\epsilon)} ]
\]
in $V( \mathcal{L}_h )$.
\end{claim}

\begin{proof}
If $|a^{(C)}| \equiv 1 \mod 2$, then Equation \eqref{equation:bepsilont} reduces to
\[
    x^{(\epsilon)} = a^{(\epsilon)} x^{(\epsilon)} + b^{(\epsilon)} + t a^{(\epsilon)}.
\]
We note that this is not equivalent to $(a^{(\epsilon)} + 1)  x^{(\epsilon)} = b^{(\epsilon)} + t a^{(\epsilon)}$, for the multiplicative identity $1 = C_1$ does not belong to $L$. However, since the height of the right hand side is at most $h = \max ( \Height( a^{(\epsilon)} ), \Height( b^{(\epsilon)} ) )$, we have $\Height( x^{(\epsilon)} ) \le h$ and hence $x^{(\epsilon)} = x^{(\epsilon)}_{ -h }$.
The equation above is then equivalent to
\[
    ( a^{(\epsilon)} + L_h) x^{(\epsilon)} = b^{(\epsilon)} + t a^{(\epsilon)},
\]
which by Theorem \ref{theorem:division_Lh}, yields $x^{(\epsilon)} = x^{(\epsilon)}_{ -h } \in [ b^{(\epsilon)} + t a^{(\epsilon)}, b^{(\epsilon)} + (t + 1) a^{(\epsilon)} ]$.
\end{proof}

\end{proof}

Unfortunately, characterising the set $X_t$ is not straightforward, for the following reason. Let $\mathcal{P} = \{ e \in S_0 : |e| \equiv 0 \mod 2 \}$ denote the ``parity-check code.'' Then for any interval $[\lambda, \upsilon]$ with $\lambda < \upsilon$, the intersection $[\lambda, \upsilon] \cap \mathcal{P}$ contains infinite ascending chains: if $e \in [\lambda, \upsilon] \cap \mathcal{P}$ and $k = \lcm( K( \lambda ), K( \upsilon ), K( x ) )$, then $f = e + C_k + C_{3k}$ satisfies $f \in [\lambda, \upsilon] \cap \mathcal{P}$ and $f > e$. Similarly, the set $[\lambda, \upsilon] \setminus \mathcal{P}$ contains infinite descending chains instead.

Nonetheless, if we view solving equations over $\F_2$ as a simplification of solving equations over $\N$, where there are strict bounds on the possible size of a solution, then we can make some restrictions on $x$ such that $ax = b$ in $\F_2(C \cup L)$. Let $k$ be an odd integer such that $\lcm( K(a^{(C)}), K(b^{(C)}) ) \ | \ k$, i.e. $a^{(C)} = a^{(C)}_{| \ k}$ and $b^{(C)} = b^{(C)}_{| \ k}$, or in other words, all cycle lengths in $a$ and $b$ divide $k2^m$ for some $m$. Then the solutions with $x^{(C)} = x^{(C)}_{| \ k}$ can be more precisely determined, since we are now able to stay in finite Boolean algebras throughout.

For all $i \in \N$, let $\lambda_i, \upsilon_i$ be as in Theorem \ref{theorem:ax=b} for the division $a^{(C)} x^{(C)} = b^{(C)}$ and define the intervals $J(i,0) =[\lambda_i + C_k + \lambda_i C_k, \upsilon_i]$ and $J(i,1) = [ \lambda_i, \upsilon_i + \upsilon_i C_k ]$ of $V( \mathcal{C}_k )$. Then, for all $t \in \{0,1\}$, let
\begin{align*}
    W_t &= \left\{ \omega \in \{0,1\}^* : \sum_i \omega_i \equiv t \mod 2 \right\} \\
    (X_t)_{| \ k} &= \bigcup_{ \omega \in W_t } \left\{ x^{(C)}_{| \ k} \in S : x_i \in J(i,\omega_i) \ \forall i \ge 0 \right\}.
\end{align*}

\begin{proposition}
Let $a,b,x \in \F_2(C \cup L)$ such that $\lcm( K(a^{(C)}), K(b^{(C)}) , K( x^{(C)} )) \ | \ k$. Then, following the notation above, $ax = b$ if and only if $x^{(C)} \in (X_t)_{| \ k}$ for some $t \in \{0,1\}$ and $x^{(\epsilon)} \in  Y_t^{(\epsilon)}$ for all $\epsilon \in \{0,1\}$.
\end{proposition}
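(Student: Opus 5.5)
The plan is to reduce the statement to Theorem \ref{theorem:division_LC}. That theorem already says $ax=b$ if and only if $x^{(C)} \in X_t$ for some $t$ and $x^{(\epsilon)} \in Y_t^{(\epsilon)}$ for all $\epsilon$. The $Y$ conditions are unchanged, so it suffices to show that, for $x^{(C)}$ with $K(x^{(C)}) \mid k$ (equivalently $x^{(C)} = x^{(C)}_{| \ k}$), we have $x^{(C)} \in X_t$ if and only if $x^{(C)} \in (X_t)_{| \ k}$.

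First observe that everything lives in finite Boolean algebras. Since $a^{(C)}$ and $b^{(C)}$ have all odd parts dividing $k$, every $a_i, b_i$ lies in $V(\mathcal{C}_k)$. The same then holds for $\lambda_i$, $\upsilon_i$ and $\lambda_0,\upsilon_0$, because $V(\mathcal{C}_k)$ is closed under sums and products and contains $1=C_1$. Moreover, each $x_i$ lies in $V(\mathcal{C}_k)$ by hypothesis. Hence the membership $x_i \in [\lambda_i,\upsilon_i]$ in $S_0$ coincides with membership in the interval of $V(\mathcal{C}_k)$, because order is defined by products and these are computed inside $S_0$.

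Next, handle the parity condition $|x^{(C)}| \equiv t$. The number of cycles of $x^{(C)}$ is $\sum_i |x_i|$, since $x_i C_{2^i}$ has the same number of terms as $x_i$. Inside $V(\mathcal{C}_k)$, $\mathcal{C}_k$ is a lattice with bottom $\zero = C_k$ and atom $a^\zero = e_\zero = C_k$ (this is also $T_k$ in Theorem \ref{theorem:Ti_atoms}). So $|x_i| \bmod 2 = |x_i|_\zero$, which is $1$ exactly when $x_i \ge C_k$, i.e. $x_i \notin P_\zero$. Set $\omega_i = |x_i| \bmod 2$. Corollary \ref{corollary:Pl} with $l=\zero$ and $a^\zero = C_k$ then gives the following. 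The condition $x_i \in [\lambda_i,\upsilon_i]$ with $|x_i|$ even is exactly $x_i \in [\lambda_i, \upsilon_i + \upsilon_i C_k]$. The condition $x_i \in [\lambda_i,\upsilon_i]$ with $|x_i|$ odd is exactly $x_i \in [\lambda_i + C_k + \lambda_i C_k, \upsilon_i]$.

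Matching these two cases against the definitions of $J(i,0)$ and $J(i,1)$ is the main point to check carefully. With the labels as written, $J(i,0)$ carries the odd-size condition and $J(i,1)$ the even one, so the correspondence is $\omega_i \leftrightarrow 1-\omega_i$, or the intervals are simply relabelled. I expect the main obstacle to be reconciling this indexing, together with the parity sum over $\omega$, with $W_t$. Only finitely many $i$ have $x_i \neq 0$; for all other $i$ we have $\lambda_i = 0$, and the even-parity option applies. With the correct reading of $J$, the parity of $|x^{(C)}|$ equals $\sum_i \omega_i$ (or the number of indices lying in the odd-type interval), which is the condition $\omega \in W_t$.

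Taking the union over admissible $\omega$ then gives $x^{(C)} \in X_t \iff x^{(C)} \in (X_t)_{| \ k}$. Combining this with Theorem \ref{theorem:division_LC} yields the proposition.
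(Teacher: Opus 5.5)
Your proposal is correct and follows the paper's own argument: reduce to Theorem~\ref{theorem:division_LC}, then use Corollary~\ref{corollary:Pl} with $l=\zero$ and $a^\zero=C_k$ in $V(\mathcal{C}_k)$ to split each interval $[\lambda_i,\upsilon_i]$ according to the parity of $|x_i|$. You are also right about the indexing: by \eqref{equation:P1a} the even-parity piece is $[\lambda_i,\upsilon_i+\upsilon_i C_k]$, so the paper's labels $J(i,0)$ and $J(i,1)$ (and its claim that $|x_i|_\zero=\omega_i$ gives $J(i,\omega_i)$) are swapped and must be exchanged for a finitely supported $\omega\in W_t$ to encode the parity of $|x^{(C)}|$ (as printed, $0\notin J(i,0)$, so every index with $x_i=0$ would force $\omega_i=1$).
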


\begin{proof}
Corollary \ref{corollary:Pl} shows that the $x^{(C)}$ such that $b^{(C)} = a^{(C)} x^{(C)}$ with $|x_i|_\zero = \omega_i$ is $J(i, \omega_i)$, with $J(i,0) =[\lambda_i + a^\zero + \lambda_i a^\zero, \upsilon_i]$ and $J(i,1) = [ \lambda_i, \upsilon_i + \upsilon_i a^\zero ]$, where $a^\zero = \zero$ is given by $C_k$ in $V( \mathcal{C}_k )$. Therefore, $x^{(C)} \in (X_t)_{| \ k}$ for $t = |x^{(C)}| \in \{0,1\}$. The rest of the proof follows Theorem \ref{theorem:division_LC}.
\end{proof}

\section{Conclusion and future work} \label{section:conclusion}

In this paper, we introduced a general framework for semirings of formal sums. We then applied this framework to study $S = \F_2 C$, where $C$ is the set of cycles. We characterise many algebraic properties of $S$, and we solve the division problem efficiently in this ring. This is in stark contrast with the original problem of dividing sums of cycles, for which no polynomial-time algorithm is known. We then extend the study to $\F_2 (L \cup C)$, i.e. considering sums of cycles and chains, which are the injective partial transformations.

The study of partial transformations, and semigroups of formal sums thereof, has many degrees of freedom.
\begin{itemize}
	\item Choice of partial transformations. The approach used in this paper could be extended to more general classes of partial transformations. A natural candidate is the class of transformations of bounded height (the maximum length of a path from a state to a periodic state, also referred to as the pre-period), which form a subsemiring.

	\item Choice of semiring $\S$. Choosing the semiring $\S = \F_2$ yielded some strong algebraic properties and dramatic simplifications to some problems (notably the division problem of sums of cycles). Other choices for $\S$ may shed different lights on the overall structure of the semiring of partial transformations. Natural examples include the Boolean semiring (which, for the product, is equivalent to working on the so-called power monoid) and other prime fields $\F_p$.

	\item Different problems. Even when restricting to the focus $S = \F_2 C$ of this paper, many interesting questions remain open. These include: the intersection of principal ideals, the polynomial identities, solving more general univariate polynomial equations, solving multivariate equations, etc.
\end{itemize}

\section*{Acknowledgment}

We would like to thank Sara Riva for fruitful discussions about the motivation behind using partial transformations.


\end{document}